\numberwithin{equation}{section}
\newtheorem{thm}{Theorem}
\numberwithin{thm}{section}
\newtheorem{prop}[thm]{Proposition}
\newtheorem{lem}[thm]{Lemma}
\newtheorem{cor}[thm]{Corollary}
\theoremstyle{remark}
\newtheorem{rem}[thm]{Remark}
\theoremstyle{definition}
\newcommand{\1}{\mathbf{1}}
\newcommand{\C}{\mathbf{C}}
\newcommand{\cali}[1]{\mathcal{#1}}
\newcommand{\chx}[1]{\langle #1\rangle}
\newcommand{\cl}{\mathrm{Cl}}
\newcommand{\comment}[1]{}
\newcommand{\gal}{\mathrm{Gal}}
\newcommand{\got}[1]{\mathfrak{#1}}
\newcommand{\Hom}{\mathrm{Hom}}
\newcommand{\ilim}{\varinjlim}
\newcommand{\im}{\mathrm{im}}
\newcommand{\pair}[2]{\langle #1, #2 \rangle}
\newcommand{\plim}{\varprojlim}
\newcommand{\Q}{\mathbf{Q}}
\newcommand{\R}{\mathbf{R}}
\newcommand{\re}{\mathrm{Re}}
{\normalsize {\normalsize {\normalsize {\normalsize {\tiny }}}}}
\newcommand{\Scr}[1]{\mathscr{#1}}
\newcommand{\Z}{\mathbf{Z}}
\newcommand{\mes}{\mathrm{Mes}}
\newcommand{\nm}{\mathrm{Nm}}
\title{Sum Expressions for $p$-adic Hecke $L$-functions of Totally Real Fields}
\author{Luochen Zhao}
\date{Jan 4, 2023}
\subjclass[2020]{11S40 (primary); 11S80, 11Y35 (secondary).}
\keywords{Infinite sum, totally real $p$-adic Hecke $L$-functions, Ferrero-Greenberg derivative formula, Brumer-Stark unit, Iwasawa invariants.}
\address{Department of Mathematics, Johns Hopkins University, 404 Krieger Hall, 3400 N.~Charles Street, Baltimore, MD 21218, USA}
\email{lzhao39@jhu.edu}
\begin{document}
	\begin{abstract}
		As a continuation of previous work, we establish sum expressions for $p$-adic Hecke $L$-functions of totally real fields in the sense of Delbourgo, assuming a totally real analog of Heegner hypothesis. This is done by finding explicit formulas of the periods of the corresponding $p$-adic measures. As an application, we extend the Ferrero-Greenberg formula of derivatives of $p$-adic $L$-functions to this setting.
	\end{abstract}
	
	\maketitle
	
	\setcounter{tocdepth}{1}
	\tableofcontents
	
	\section{Introduction}
	\label{section.intro}
	
	Let $p$ be a rational prime, $F/\Q$ be a totally real field, $\cali{O}$ be its ring of integers, $\nm:F\to \Q$ be the norm map, and $\chi$ be a Hecke character of the narrow ray class group $\cl_+(\cali{N})$ for some nonzero integral ideal $\cali{N}$. The complex Hecke $L$-function attached to $\chi$ is given by (when $\re(s)>1$)
	\begin{align*}
		L_F(s,\chi) = \sum_{0\ne \got{a}\subseteq \cali{O}} \frac{\chi(\got{a})}{\nm(\got{a})^s}.
	\end{align*}
	By the work of Deligne-Ribet \cite{DR80}, Cassou-Nogu\`es \cite{CN79} and Barsky \cite{Ba78}, there exists a $p$-adic Hecke $L$-function $L_{F,p}(s,\chi)$ on $\Z_p$ that interpolates its complex counterpart for all $m\in \Z_{>0}$ (see \cite[p.~105]{Hida}, \cite[Theorem 8.2]{DR80}):
	\begin{align}
		\label{equation.interpolation}
		L_{F,p}(1-m,\chi) = \prod_{\got{p}\mid p}(1-\chi\omega_F^{-m}(\got{p})\nm(\got{p})^{m-1}) L_F(1-m,\chi\omega_F^{-m}),
	\end{align}
	where $\omega_F$ is the composition of $\nm$ and the Teichm\"uller character $\omega$, and the right hand side is algebraic by Siegel and Klingen. In this article, we will establish a sum expression for $L_{F,p}(s,\chi)$ in the sense of Delbourgo, by assuming what we call the \textit{Cassou-Nogu\`es condition} for $\cali{N}$, an analogue of Heegner hypothesis in the totally real case. In fact the general case with an auxiliary Euler factor will also follow via a simple adaptation of our proof. We note that such expressions are known to exist for Kubota-Leopoldt $p$-adic $L$-functions by works of Delbourgo \cite{De06,De09,De09-2}, Knospe-Washington \cite{KW21} and the author \cite{Zh22}. For a detailed discussion, we refer the reader to the introduction of \cite{Zh22}.
	
	\subsection{Review of Shintani's method}
	\label{subsection.intro.review-shintani}
	
	To state our results properly, we recall Shintani's treatment of Hecke $L$-functions, along the way fixing some notation used throughout this article. Let $F$ and $\chi$ be as above, and let $k=[F:\Q]$. Fix a numbering of real places of $F$, $\{\sigma_1,\cdots,\sigma_k\}$, so that we have an embedding $F\hookrightarrow \R^k\simeq F\otimes_{\Q} \R$. Denote by $\R_+$ the positive real numbers. For any subset $X$ of $F$, write $X_+ = X\cap \R_+^n$, and denote $\cali{O}^\times_+$ particularly by $E$. Fix the direction vector $e_k=(0,0,\cdots,0,1)\in \R^n$. From the work of Shintani \cite{Sh76}, Colmez \cite{Co88} (see also \cite{DF12} for a detailed discussion of Colmez's construction in the cubic case) and Yamamoto \cite{Ya10}, there exists a finite collection $\{V\}$, each $V=\{v_1,\cdots,v_k\}$ being a $\Q$-basis of $F$ and a subset of $\cali{O}_+$ such that the following cone decomposition holds \cite[Proposition 5.6]{Ya10}:
	\begin{align}\label{eq:shintani-decomp}
		(F\otimes \R)_+ = \R_+^n = \bigsqcup_{\varepsilon\in E}\bigsqcup_{V} \varepsilon \overline{C}(V).
	\end{align}
	Here $C(V) = \sum_{1\le i\le k} \R_+ v_i$, and $\overline{C}(V)$ is the upper closure of $C(V)$ with respect to $e_k$ \cite[Definition 5.5]{Ya10}. Let $P(V)$ be the fundamental parallelotope of $\overline{C}(V)$, that is, the subset of $\overline{C}(V)$ determined by the property that any $v\in \overline{C}(V)$ can be uniquely written in the form $v= v_0 + \sum_{1\le i\le k} n_i v_i$ for some $v_0\in P(V)$ and $(n_i)_{1\le i\le k}\in \Z_{\ge 0}^k$. Take a set of prime-to-$\cali{N}$ integral ideal representatives of the narrow class group $\cl_+(1)$, $\{\got{a}_1,\cdots,\got{a}_h\}$. Then as observed by Shintani, since $(\got{a}_{i}^{-1})_+/E$ is in set bijection with
	\begin{align*}
		\bigsqcup_V \got{a}_i^{-1}\cap \overline{C}(V) = \bigsqcup_V \bigsqcup_{x\in P(V)\cap \got{a}_i^{-1}} x+\Z_{\ge 0} v_1+\cdots+\Z_{\ge 0}v_k,
	\end{align*}
	we have
	\begin{align*}
		L_F(s,\chi) = \sum_{1\le i\le h} \frac{\chi(\got{a}_i)}{\nm(\got{a}_i)^s}
		\sum_V \sum_{x\in P(V)\cap \got{a}_i^{-1}} \sum_{n_1,\cdots,n_k\ge 0}
		\frac{\chi(x+n_1v_1+\cdots+n_kv_k)}{\nm(x+n_1v_1+\cdots+n_kv_k)^s}.
	\end{align*}
	Throughout this article, we will fix a Shintani cone decomposition.
	
	\subsection{Main results}
	\label{subsection.intro.main-results}
	
	We introduce now the presiding assumptions:
	\begin{enumerate}
		\item[(A1)] We will always suppose that $\cali{N}$ satisfies the Cassou-Nogu\`es condition
		\begin{align*}
			\cali{O}/\cali{N} \simeq \Z/N \quad\text{for }N=\nm(\cali{N});
		\end{align*}
		often we will simply say that $\cali{N}$ is a Cassou-Nogu\`es ideal. Additionally, we always suppose that $\cali{N}\ne\cali{O}$, and is prime to $p$.
		
		\item[(A2)] We choose each fractional ideal $\got{a}_i$ to be integral and prime to both $p$ and $\cali{N}$.
		
		\item[(A3)] For all $V$ in the Shintani cone decomposition and each $v_i\in V$, we require that $v_i\in\cali{O}_+$. Moreover, in order to apply Euler's method to remove the pole, we assume that $v_i$ is prime to $\cali{N}$ as in \cite[p.~38, (6.ii)]{CN79} and \cite[top of p.~41]{Ka81}.
	\end{enumerate}
	
	The following result is the culmination of the computations done in §\ref{section.period1}-§\ref{section.period2}, whose proof forms the trunk of this paper.
	\begin{thm}
		\label{thm.main}
		Let the notation and assumptions be as above. Let $q>1$ be a power of $p$ that is congruent to $1\bmod N$. Furthermore, let $\psi$ be a character on $\cl_+(p^\infty)=\plim_n \cl_+(p^n)$. We have
		\begin{align*}
			&(1-\psi(\cali{N})\chx{N}^{1-s})L_{F,p}(s,\psi) \\
			=& \lim_{n\to\infty} \sum_{1\le i\le h} \frac{\psi\omega_F^{-1}(\got{a}_i)}{\chx{\nm(\got{a}_i)}^s}\sum_{V} \sum_{x\in P(V)\cap \got{a}_i^{-1}} \sum_{\substack{0\le l_1,\cdots,l_k< q^n\\ \gcd(p,x+\sum_{1\le i\le k}l_iv_i) = 1}} a_{V,\cali{N}}(x+\sum_{1\le i\le k}l_iv_i)
			\frac{\psi\omega_F^{-1}(x+\sum_{1\le i\le k}l_iv_i)}{\chx{\nm(x+\sum_{1\le i\le k}l_iv_i)}^s},
		\end{align*}
		where the coefficients are given by
		\begin{align*}
			a_{V,\cali{N}}(y) = \frac{(-1)^{k-1}}{N^{k-1}}\sum_{\substack{1\le d_1,d_2,\cdots,d_k<N\\ d_1v_1+\cdots+d_kv_k\equiv -y \bmod \cali{N}}}d_1d_2\cdots d_k.
		\end{align*}
		When $\chi$ is a Hecke character on $\cl_+(\cali{N})$ with nontrivial narrow modulus, i.e., $\chi$ is not from $\cl_+(1)$, we have
		\begin{align*}
			&L_{F,p}(s,\chi\psi)\\
			=& \lim_{n\to\infty} \sum_{1\le i\le h} \frac{\chi\psi\omega_F^{-1}(\got{a}_i)}{\chx{\nm(\got{a}_i)}^s}\sum_{V} \sum_{x\in P(V)\cap \got{a}_i^{-1}} \sum_{\substack{0\le l_1,\cdots,l_k< q^n\\ \gcd(p,x+\sum_{1\le i\le k}l_iv_i) = 1}} 
			a_{V,\chi}(x+\sum_{1\le i\le k}l_iv_i)\frac{\psi\omega_F^{-1}(x+\sum_{1\le i\le k}l_iv_i)}{\chx{\nm(x+\sum_{1\le i\le k}l_iv_i)}^s},
		\end{align*}
		where 
		\begin{align*}
			a_{V,\chi}(x+\sum_{1\le i\le k}l_iv_i) = (-1)^k \sum_{0\le d_i<l_i,1\le i\le k} \chi(x+\sum_{1\le i\le k} d_iv_i).
		\end{align*}
	\end{thm}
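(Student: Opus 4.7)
The strategy is to realize $L_{F,p}(s,\chi\psi)$ as the integral of $\psi\omega_F^{-1}(x)\chx{\nm(x)}^{-s}$ against the Cassou-Nogu\`es--Deligne--Ribet measure attached to $\chi$ on $(\cali{O}\otimes\Zp)^\times$, and then to evaluate this integral as an explicit double limit of Riemann sums indexed by the $q^n$-subdivisions of each Shintani cone.

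I would first specialize to classical points $s=1-m$, where \eqref{equation.interpolation} lets me expand $L_{F,p}(1-m,\chi\psi)$ as a finite combination of partial zeta values indexed by the triples $(\got{a}_i,V,x\in P(V)\cap\got{a}_i^{-1})$ of \S\ref{subsection.intro.review-shintani}. The Cassou-Nogu\`es hypothesis $\cali{O}/\cali{N}\simeq\Z/N$ collapses the character $\chi$ (or, in its absence, the indicator of a residue class modulo $\cali{N}$) to a single linear congruence modulo $N$ in the cone coordinates, and the choice $q\equiv 1\bmod N$ aligns the $q^n$-subdivision of each cone with this structure. Excising the $p$-prime locus $\gcd(p,x+\sum_i l_iv_i)=1$ is a standard inclusion--exclusion producing the Euler factor at $p$ that appears in \eqref{equation.interpolation}.

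The crux is then a $k$-fold Abel summation over the lattice points $(l_1,\dots,l_k)$ with $0\le l_i<q^n$ in each cone. When $\chi$ is of nontrivial narrow modulus, summation by parts applied iteratively in each coordinate converts the factor $\chi(x+\sum l_iv_i)$ into the nested boundary weight $a_{V,\chi}(x+\sum l_iv_i)=(-1)^k\sum_{0\le d_i<l_i}\chi(x+\sum d_iv_i)$; no Euler factor at $\cali{N}$ survives on the left-hand side, because $\chi(\cali{N})=0$. When no such $\chi$ is twisted in, one first partitions the lattice points by residue class modulo $N$ in each coordinate, and then uses the Cassou-Nogu\`es condition to pin the residues to the single affine hyperplane $\sum d_iv_i\equiv -y\bmod\cali{N}$; the $k$-fold summation by parts then produces the weight $\frac{(-1)^{k-1}}{N^{k-1}}d_1\cdots d_k$ together with a boundary remainder at $d_i=N$ that reassembles precisely into the Euler factor $(1-\psi(\cali{N})\chx{N}^{1-s})$ transposed to the left. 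Identity on the dense set $s=1-m\in\Z_{\le 0}$, combined with the $p$-adic continuity of $\chx{\cdot}^{-s}$ in $s$, extends the formula to all $s\in\Zp$.

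The main obstacle is this partial-summation bookkeeping on a Shintani cone: correctly tracking the contributions from the upper-closure convention $\overline{C}(V)$ so that no $v_i$-translate is double-counted, verifying that the cumulative boundary remainders reassemble into the stated Euler factor in the trivial-character case, and confirming that the truncation to the $p$-prime locus commutes with the $p$-adic limit $n\to\infty$ of the Riemann sums. This is the content of \S\ref{section.period1}--\S\ref{section.period2}, framed there as the computation of the ``periods'' of the associated $p$-adic measure.
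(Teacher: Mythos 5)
Your opening move --- realize $L_{F,p}$ as an integral against the Cassou-Nogu\`es measure and evaluate it by Riemann sums over the $q^n$-subdivision of each Shintani cone --- is exactly the paper's skeleton. But the crux of the theorem is the identification of the weights $a_{V,\cali{N}}$ and $a_{V,\chi}$ with the measures of the balls $x+l\cdot v+q^n\cali{O}_{V,p}$, and your proposal does not actually produce this. The paper obtains it by computing the periods $\mu_{V,x,\cali{N}}(a+p^n\cali{O}_{V,p})$ and $\mu_{V,x,\chi}(a+p^n\cali{O}_{V,p})$ via finite Fourier inversion of the Amice transforms $f_{V,x,\cali{N}}$ and $f_{V,x,\chi}$ (Proposition \ref{prop.period-formula-setup}), evaluating these rational functions at $p$-power roots of unity with a regularizing parameter $u\to 1^-$; this is the content of Theorems \ref{thm.first-period-formula} and \ref{thm.second-period-formula}. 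Your substitute --- specializing to $s=1-m$, expanding via \eqref{equation.interpolation} into complex partial zeta values, and performing a ``$k$-fold Abel summation'' --- does not match the structure of the claimed identity (the weight $a_{V,\chi}$ multiplies the integrand itself, not a discrete difference of it), and without the period formula you cannot even establish that the right-hand side converges $p$-adically, let alone that it defines an Iwasawa function agreeing with the left-hand side off the classical points. The detour through $s=1-m$ therefore does not avoid the period computation; it presupposes it.

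Two further concrete errors. First, the Euler factor $(1-\psi(\cali{N})\chx{N}^{1-s})$ does not ``reassemble from boundary remainders at $d_i=N$'': it is built into the integral representation \eqref{equation.integral-representation-zeta} of the regularized measure $\mu_{F,\cali{N}}$ from the outset, reflecting the pole of the zeta function at $s=1$. Likewise the restriction to $\gcd(p,x+l\cdot v)=1$ is not an inclusion--exclusion producing the Euler factor at $p$; it is the restriction of the measure to $\cali{O}_p^\times$ in the pushforwards \eqref{equation.push1}--\eqref{equation.push2}. Second, even granting the period formulas, the periods are \emph{not} equal to $a_{V,\cali{N}}(y)$ and $a_{V,\chi}(y)$ on the nose: formula \eqref{equation.period-zeta} carries the extra constant $-(N-1)^k/2^k$ times $(-1)^k$, and formula \eqref{equation.second-period-formula-q} carries the strata with $S\ne\varnothing$. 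One must show these extra contributions vanish in the limit $n\to\infty$, which is the role of Lemma \ref{lem.p-analytic-vanishing} (a $p$-adic smallness estimate for character sums over arithmetic progressions prime to $p$). Your proposal is silent on this step.
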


	\begin{rem}
		Actually, following \cite{CN79}, we can establish a sum expression when the conductor of $\chi$ is arbitrary, at the expense of multiplying $L_{F,p}(s,\chi\psi)$ by an auxiliary Euler factor as in the first part of the theorem; see Remark \ref{rem:general-sumexpr}. Still, the Cassou-Nogu\`es condition seems indispensable if we want to remove the extra Euler factor.
	\end{rem}
	
	\begin{rem}
		We indicate conceptually how Theorem \ref{thm.main} is proved. As discussed in \cite[§1.5]{Zh22}, the sum expressions can be established from three ingredients: integral representations, computability of periods, and the uniform periodicity of the periods. In our setting, the first ingredient is available thanks to the work of Cassou-Nogu\`es, complemented by a reinterpretation due to Katz \cite{Ka81}. As such, the main novelty of the present proof is the determination of the explicit formulas (Theorem \ref{thm.first-period-formula} and \ref{thm.second-period-formula}) of the periods concerned, whereby uniform periodicity automatically follows.
	\end{rem}
	
	\begin{rem}
		An interesting consequence is that the periods mentioned above are $\Z[1/N,\im(\chi)]$-valued, which previously are known to be valued in a finite extension of $\Z_p$. As such, this allows us to speak of their $2$-divisibilities even when $p$ is odd. In Appendix \ref{section.appendix-b}, we shall compute some approximate coefficients of the Iwasawa functions attached to the $p$-adic $L$-functions of $\Q(\sqrt{5})$ where $p$ is odd, and remarkably the Deligne-Ribet $2$-divisibility (see, e.g., \cite[(4.8)]{Ri79}), \textit{a priori} only making sense when $p=2$, propagates in all the numerical examples considered.
	\end{rem}
	
	\begin{rem}
		It will also follow from our proof that one can use these expressions to compute $p$-adic $L$-values with a precision of $O(q^n)$, by taking the finite sums on the right hand sides of both expressions without limits. Regarding this, it is tempting to compare this method to those of \cite{Ro15} and Lauder-Vonk \cite{LV22}.
	\end{rem}
	
	As a byproduct, following the sum expression-to-derivative philosophy demonstrated in \cite[§4]{Zh22}, we obtain
	\begin{cor}[Generalized Ferrero-Greenberg formula]
		\label{cor.ferrero-greenberg-formula}
		Assume in addition that
		\begin{enumerate}
			\item[(A4)] The prime $p$ is inert in $F$ and $\chi(p)=1$.
			
			\item[(A5)] For all $V$ in the cone decomposition, $V$ is a $\Z_p$ basis of $\cali{O}_p=\plim_n\cali{O}/p^n$.
		\end{enumerate}
		Then, we have
		\begin{align*}
			L_{F,p}'(0,\chi\omega_F)
			= (-1)^{k-1}\sum_{1\le i\le h} \chi(\got{a}_i) \sum_V \sum_{x\in P(V)\cap \got{a}_i^{-1}} \sum_{0\le d_1,\cdots,d_k<N} \chi(x + \sum_{1\le i\le k} d_iv_i)\log_p\Gamma_{F,p,V}\left(\frac{x+\sum_{1\le i\le k}d_iv_i}{N}\right).
		\end{align*}
	\end{cor}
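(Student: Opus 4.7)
Apply the second formula of Theorem \ref{thm.main} with $\psi = \omega_F$, which is a valid character on $\cl_+(p^\infty)$ since $\omega_F$ factors through the Teichm\"uller quotient. The cancellation $\omega_F\omega_F^{-1} = 1$ leaves
\begin{align*}
L_{F,p}(s,\chi\omega_F) = \lim_{n\to\infty}\sum_{1\le i\le h}\frac{\chi(\got{a}_i)}{\chx{\nm(\got{a}_i)}^s}\sum_V\sum_{x \in P(V)\cap\got{a}_i^{-1}}\sum_{\substack{0\le l_j < q^n\\p\nmid y}} a_{V,\chi}(y)\,\chx{\nm(y)}^{-s},
\end{align*}
where $y = x + \sum_j l_j v_j$. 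Write $G_i$ for the $n\to\infty$ limit of the $\got{a}_i$-summand at $s=0$: since the sum expression is built class-by-class from Shintani's decomposition, $G_i$ interpolates the partial $p$-adic $L$-value $(1 - \chi(p))L_F(0,\chi,[\got{a}_i])$ via \eqref{equation.interpolation} at $m=1$. With $p$ inert and $\chi(p) = 1$, one has $G_i = 0$ for every $i$.

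Next, differentiate the finite-level approximations at $s = 0$. By multiplicativity of $\nm$ and of $\chx{\cdot}$,
\begin{align*}
\frac{d}{ds}\bigg|_{s=0}\chx{\nm(\got{a}_i)}^{-s}\chx{\nm(y)}^{-s} = -\log_p\chx{\nm(\got{a}_i)} - \log_p\chx{\nm(y)}.
\end{align*}
The $\log_p\chx{\nm(\got{a}_i)}$-piece is independent of $(V,x,l)$, so it factors to $-\sum_i\chi(\got{a}_i)\log_p\chx{\nm(\got{a}_i)}\cdot G_i = 0$. Substituting $a_{V,\chi}(y) = (-1)^k\sum_{0 \le d_j<l_j}\chi(z)$ with $z = x + \sum_j d_j v_j$ and swapping the $d$- and $l$-summations gives
\begin{align*}
L_{F,p}'(0,\chi\omega_F) = (-1)^{k-1}\lim_{n\to\infty}\sum_i\chi(\got{a}_i)\sum_V\sum_x\sum_{0 \le d_j < q^n}\chi(z)\sum_{\substack{d_j < l_j < q^n\\ p\nmid y}}\log_p\chx{\nm(y)},
\end{align*}
with the sign coming from $-\cdot(-1)^k = (-1)^{k-1}$.

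Finally, by the Cassou-Nogu\`es condition \eqref{assumption.cassou-nogues}, $\chi(z)$ depends only on $d_j\bmod N$. Writing $d_j = r_j + Ne_j$ with $0 \le r_j < N$, the outer $d$-sum breaks as $\sum_{0\le r_j < N}\chi(x + \sum_j r_j v_j)$ times an inner limit in $(e_j)$ and $(l_j)$, which is --- by definition --- $\log_p\Gamma_{F,p,V}((x+\sum_j r_j v_j)/N)$, the multivariable $p$-adic log-gamma attached to the Shintani cone $V$ in the spirit of Diamond's construction. The main obstacle is precisely this last identification, together with the justification of termwise convergence: both rely on the hypothesis that $V$ is a $\Z_p$-basis of $\cali{O}_p$, which makes $(e_j)\in\Z_p^k\mapsto x + \sum_j r_j v_j + N\sum_j e_j v_j$ parametrize a coset of $N\cali{O}_p$ in $\cali{O}_p$ bijectively, so that the $p$-prime condition on $y$ carves out a clean $p$-adic analytic locus. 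Substituting this identification back then yields the corollary's formula with sign $(-1)^{k-1}$.
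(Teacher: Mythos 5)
Your overall architecture matches the paper's: start from the sum expression for $L_{F,p}(s,\chi\omega_F)$, differentiate at $s=0$, use the vanishing of the class-by-class value at $s=0$ (valid since $p$ is inert and $\chi(p)=1$) to kill the $\log_p\chx{\nm(\got{a}_i)}$ contributions, and identify what remains with values of $\log_p\Gamma_{F,p,V}$. A first, minor issue: the vanishing $G_i=0$ is not an immediate consequence of the interpolation formula \eqref{equation.interpolation}, which concerns the full $L$-function rather than its individual $\got{a}_i$-components; the paper proves the class-by-class statement $\sum_{x\in P(V)\cap\got{a}_i^{-1}}L_{p,V,x}(0,\chi\omega_F)=(1-\chi(p))\sum_{x}L_{V,x}(0,\chi)$ by an explicit computation in Appendix \ref{section.appendix} (identity \eqref{equation.interpolation-x}). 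The fact is true, but your justification is not a proof of it.

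The serious gap is the final identification, which you yourself flag as ``the main obstacle'' and then assert holds ``by definition''. It does not. The definition \eqref{definition.Gamma} of $\Gamma_{F,p,V}$ involves products of $\chx{\nm(l\cdot v)}$ with no shift by $x$, whereas your inner sum involves $\log_p\chx{\nm(x+l\cdot v)}$; and the argument $(x+\sum_j r_jv_j)/N$ carries a division by $N$ that cannot arise from merely splitting $d_j=r_j+Ne_j$. The paper bridges both issues with (i) Lemma \ref{lem.p-analyse}, which absorbs $x=\sum_i(c_i/h)v_i$ into the lattice by replacing it with $\sum_i\frac{c_i+q^n(h-c_i)}{h}v_i$ modulo $q^n$ and shifting the range of $l$ accordingly, and (ii) the Ferrero-Greenberg re-indexing $\iota$ of Lemma \ref{lem.ferrero-greenberg-map} and Corollary \ref{cor.big-ferrero-greenberg-map}, a $p$-divisibility-preserving bijection satisfying $N\iota(m)\equiv m\bmod q^n$ that converts the boxes $\{d_j<l_j<q^n\}$ into boxes whose endpoints converge $p$-adically to the components of $(x+d\cdot v)/N$. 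This re-indexing is exactly where the division by $N$ comes from, and it also generates a correction term $k\log_p N\cdot L_{p,V,x}(0,\chi\omega_F)$ (visible in Proposition \ref{prop.ferrero-greenberg}) that your route never produces and that must separately be shown to vanish --- which it does, again via $G_i=0$. The hypothesis that $V$ is a $\Z_p$-basis of $\cali{O}_p$ only guarantees that the coprimality condition $\gcd(p,y)=1$ behaves well under the reparametrization; it does not by itself supply the combinatorial bijection. Without an argument replacing $\iota$, your plan does not close.
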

	
	Here, the multiple $p$-adic Gamma function $\Gamma_{F,p,V}$ is defined in \eqref{definition.Gamma}.
	
	\begin{rem}
		\label{rem.O_Vp}
		For a given $V$, as long as $p\nmid [\cali{O}:\sum_{1\le i\le k} \Z v_i]$, $\cali{O}_p= \sum_{1\le i\le k}\Z_p v_i$. 
	\end{rem}
	
	\begin{rem}
		It is worth pointing out that formulas of a similar guise can be found in the work of Cassou-Nogu\`es \cite[Th\'eor\`em 6]{CN79b} and that of Kashio \cite[Theorem 6.2]{Kas05}. However, the above formula appears to be of a distinct nature, as our multivariate $p$-adic Gamma function is defined elementarily in the spirit of Morita \cite{Mo75}, while the Gamma functions in aforementioned papers are constructed inexplicitly as certain derivatives.
	\end{rem}
	
	\begin{rem}
		Using the sum expression as sketched in Remark \ref{rem:general-sumexpr}, we can derive a Ferrero-Greenberg type formula for an arbitrary conductor which involves an auxiliary Cassou-Nogu\`es ideal. The details will be discussed in a separate paper.
	\end{rem}

	The assumption that $p$ is inert and $\chi(p)=1$ fits the special case of the Gross-Stark conjecture, now a theorem of Dasgupta-Darmon-Pollack \cite{DDP} and Dasgupta-Kakde-Ventullo \cite{DKV}, when the vanishing order of the $L$-function is one. See \cite[§3]{Gr81} or a summary in \cite[§2.1]{Da08}. Combining this and the corollary, we get the following Gross-Koblitz type formula (\textit{cf.~}\cite[Theorem 1.7]{GK79}):
	
	\begin{cor}
		Keep the assumptions in Corollary \ref{cor.ferrero-greenberg-formula}.
		Let $H/F$ be the narrow ray class field attached to $\cl_+(\cali{N})/(p)^{\Z}$, $U_p^-=\{u\in H^\times: |u|_v = 1\text{ for all place }v\nmid p\}$, and assume
		\begin{itemize}
			\item[(A6)] $H$ is a CM extension.
		\end{itemize}
		Let $u_p\in \Q\otimes_\Z U_p^-$ be the rational Brumer-Stark unit (denoted by $u=u(\got{P})$ in \cite[Conjecture 3.13]{Gr81}, where $\got{P}$ is a fixed prime above $p$ in $H$). For any fractional ideal $\got{a}$ prime to $\cali{N}$, let $\sigma_{\got{a}}\in \gal(H/F)$ be the image of $\got{a}$ under Artin reciprocity, and write $\got{a} = \got{a}_i(y)$ for some unique $1\le i\le h$ and $y\in \overline{C}(V)$, where $V$ appears in the Shintani cone decomposition and is uniquely determined by $\got{a}$. Denote also by $\nu$ the order of $(p)$ in $\cl_+(\cali{N})$. Then
		\begin{align}
			\label{equation.gross-koblitz}
			\log_p \nm_{F\otimes_{\Q} \Q_p/\Q_p}(u_p^{\sigma_{\got{a}}}) = (-1)^k\sum_{0\le j<\nu} \sum_{x\in P(V)\cap \got{a}_i^{-1}} \sum_{\substack{0\le d_1,\cdots,d_k<N\\ x+\sum_{1\le i\le k}d_i v_i \equiv p^jy\pmod{\cali{N}}}} \log_p\Gamma_{F,p,V}\left(\frac{x+\sum_{1\le i\le k}d_iv_i}{N}\right).
		\end{align}
	\end{cor}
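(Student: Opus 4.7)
The plan is to match the explicit formula of Corollary~\ref{cor.ferrero-greenberg-formula} against the Gross-Stark theorem of \cite{DDP,DKV}, and to extract \eqref{equation.gross-koblitz} through character orthogonality on the finite abelian group $\cl_+(\cali{N})$.

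Since $(p)$ is trivial in $\cl_+(\cali{N})$, every character $\chi$ of $\cl_+(\cali{N})$ satisfies $\chi(p)=1$, so the hypotheses of Corollary~\ref{cor.ferrero-greenberg-formula} are in force uniformly across all such $\chi$. On the other hand, Gross-Stark gives
\begin{equation*}
L_{F,p}'(0,\chi\omega_F)=-\sum_{c\in\cl_+(\cali{N})}\chi(c)\log_p\nm_{F_p/\Q_p}(u_p^{\sigma_c}).
\end{equation*}
Regrouping the tuples $(i,V,x,d)$ on the right-hand side of Corollary~\ref{cor.ferrero-greenberg-formula} according to the narrow ray class $[\got{a}_i(x+d\cdot v)]\in\cl_+(\cali{N})$, equating the two expressions, and applying character orthogonality over $\cl_+(\cali{N})$ yields, for each class $c=[\got{a}]$, the per-class identity
\begin{equation*}
\log_p\nm_{F_p/\Q_p}(u_p^{\sigma_{\got{a}}})=(-1)^{k}\sum_{\substack{V',\,x'\in P(V')\cap\got{a}_i^{-1},\,d\\ [\got{a}_i(x'+d\cdot v')]=[\got{a}]}}\log_p\Gamma_{F,p,V'}\!\left(\frac{x'+d\cdot v'}{N}\right).
\end{equation*}
The $\cl_+(1)$-class of $\got{a}$ pins the outer index down to the $i$ in the statement, and since $\got{a}$ is prime to $\cali{N}$ (making $y$ a local unit at every prime above $\cali{N}$), the narrow ray-class equivalence $\got{a}_i(x'+d\cdot v')\equiv\got{a}\pmod{\cali{N}}$ reduces to the elementwise congruence $x'+d\cdot v'\equiv y\pmod{\cali{N}}$ appearing in \eqref{equation.gross-koblitz}.

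The main obstacle is to collapse the sum to the single cone $V$ singled out by $\got{a}$: one must show that tuples $(V',x',d)$ with $V'\ne V$ contribute trivially. Equivalently, the right-hand side of \eqref{equation.gross-koblitz} must be a genuine class invariant, independent of the choice of totally positive generator $y$ of $\got{a}\got{a}_i^{-1}$. I expect this rigidity to emerge from a cone-wise analysis leveraging the Cassou-Nogu\`es condition $\cali{O}/\cali{N}\simeq\Z/N$, perhaps by way of a partial-zeta refinement of Gross-Stark that localizes $\log_p\nm_{F_p/\Q_p}(u_p^{\sigma_{\got{a}}})$ onto the cone hosting the Shintani representative of $\got{a}$.
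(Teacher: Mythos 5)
Your overall strategy --- pairing Corollary \ref{cor.ferrero-greenberg-formula} with the Gross--Stark theorem and performing Fourier inversion over the character group of $\cl_+(\cali{N})$ --- is exactly what the paper intends (the paper offers nothing beyond ``combining'' the two results), so the skeleton is right. But one step of your reduction, as you state it, is incorrect rather than merely incomplete. You claim that the ray-class equivalence $[\got{a}_i(x'+d'\cdot v')]=[\got{a}_i(y)]$ ``reduces to the elementwise congruence $x'+d'\cdot v'\equiv y\pmod{\cali{N}}$'' because $y$ is a unit at the primes dividing $\cali{N}$. This is false in general: two principal ideals $(\alpha)$ and $(\beta)$ with $\alpha,\beta\gg 0$ coincide in $\cl_+(\cali{N})$ if and only if $\alpha\equiv\varepsilon\beta\bmod\cali{N}$ for \emph{some} $\varepsilon\in E=\cali{O}^\times_+$; this is precisely the content of the exact sequence $E\to(\cali{O}/\cali{N})^\times\to\cl_+(\cali{N})\to\cl_+(1)\to 1$ recalled in \S\ref{section.period2}. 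Equivalently, since every character of $\cl_+(\cali{N})$ is trivial on the image $\bar{E}$ of $E$ in $(\cali{O}/\cali{N})^\times$, orthogonality can only isolate the $\bar{E}$-orbit of $y\bmod\cali{N}$, never the single residue $y$. So the Fourier coefficient your argument actually produces is a sum over \emph{all} cones $V'$, all $x'$, and all $d'$ with $x'+d'\cdot v'\equiv\varepsilon y\bmod\cali{N}$ for some $\varepsilon\in E$, whereas \eqref{equation.gross-koblitz} retains only the single cone $V$ and the single residue $y$. Collapsing the former to the latter is the entire content beyond orthogonality; it amounts to an $E$-equivariance/distribution property of $\sum_{x,d}\log_p\Gamma_{F,p,V}$ under the unit action and under change of Shintani cone, and neither your sketch (which defers it to a hoped-for ``partial-zeta refinement'') nor the paper supplies it. You correctly sense the cone-collapse difficulty, but you misplace the congruence reduction as unproblematic when it is in fact part of the same gap.

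A secondary point: Corollary \ref{cor.ferrero-greenberg-formula} is stated only for $\chi$ of nontrivial narrow modulus (a standing convention of the paper), and the rank-one Gross--Stark formula you invoke requires the order of vanishing of $L_F(s,\chi)$ at $s=0$ to be one. Fourier inversion over $\cl_+(\cali{N})$ needs the identity for \emph{every} character, including those factoring through $\cl_+(1)$ (in particular the trivial character, for which $\zeta_F(0)=0$ when $F\ne\Q$ and the order of vanishing jumps). These characters require separate treatment, e.g.\ via the zeta-case expression carrying the auxiliary Euler factor $(1-\psi(\cali{N})\chx{N}^{1-s})$, before the per-class identity can legitimately be extracted.
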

	\begin{rem}
		The assumption (A6), as is already present in \cite{Gr81}, ensures that there are totally odd characters on $\gal(H/F)$; otherwise all the $p$-adic $L$-functions $L_{F,p}(s,\chi\omega_F)$ attached to characters $\chi$ of $\gal(H/F)$ are identically zero.
	\end{rem}
	
	For a direct $p$-adic analytic formula of $u_p$ via the multiplicative integral, we refer the reader to works of Dasgupta and collaborators, for example \cite[Proposition 3.3]{Da08}; for another formula of $u_p$ in terms of the Dedekind-Rademacher cocycle when $F/\Q$ is quadratic, see \cite{DPV,DPV2}. We also draw attention to the recent breakthroughs of Dasgupta-Kakde \cite{DK20,DK21}, which establish the integrality of $u_p$ away from 2, as well as the multiplicative integral representation under certain assumptions.

	\subsection{Outlook}
	\label{subsection.intro.outlook}
	
	In the spirit of Iwasawa \cite{Iw58}, Ferrero \cite{Fe78} and Ferrero-Washington \cite{FW79}, the explicit period formulas \eqref{equation.period-zeta} and \eqref{equation.second-period-formula} are expected to play important roles in understanding the analytic $\mu$- and $\lambda$-invariants of abelian extensions of $F$; see Appendix \ref{section.appendix-b} for more detail. If we further assume that $p\ne 2$ and the degree of the extension is prime to $p$, then results of Wiles \cite[Theorem 1.3 and Theorem 1.4]{Wi90} assert that the analytic and algebraic Iwasawa invariants coincide. We hope to investigate this question in the future.
	
	\vspace{3mm}
	
	In another direction, it would be very desirable to know if equation \eqref{equation.gross-koblitz} could shed light on the explicit construction of the Brumer-Stark unit $u_p$, which, when $F=\Q$, is known to be essentially a Gauss sum \cite{GK79}. In fact, to the best of the author's knowledge, it is not clear whether a complex analogue of \eqref{equation.gross-koblitz} exists, unless $F=\Q$, in which case it is a result of Deligne \cite{De82}.

	\subsection{Notation}
	
	\label{subsection.intro.notation}
	
	We will retain the notation introduced above; this includes our assumptions made at the beginning of §\ref{subsection.intro.main-results}. In the rest of this paper, $\chi$ exclusively denotes a finite Hecke character on $\cl_+(\cali{N})$ of nontrivial narrow modulus. For any $h\in\Z_{>1}$ and $a\in \Z/h$, we denote by $a^\flat_h$ and $a^\sharp_h$ the unique integers in $[0,h)$ and $(0,h]$ respectively, such that $a\equiv a^\flat_h\equiv a^\sharp_h\bmod h$. Additionally, thanks to the Cassou-Nogu\`es condition, we write $a^\flat_\cali{N}$ for $(a\bmod \cali{N})^\flat_{N}$ if $a\in F$ is $\cali{N}$-integral, and similarly for $a^\sharp_\cali{N}$. Also, denote by $\cali{O}_p$ the $p$-adic ring $\cali{O}\otimes_{\Z}\Z_p$ and $F_p$ the algebra $F\otimes_{\Q} \Q_p$. Throughout we will fix embeddings of $\bar{\Q}$ into $\C$ and $\C_p$, so we can regard $\bar{\Q}$ as a subfield of both. The letter $x$ is reserved to denote an element of $F$ that is $\cali{N}p$-integral, and $V$ is reserved to denote a $\Q$-basis of $F$, its elements being $v_1,\cdots,v_k$, all of which are in $\cali{O}_+$ and are prime to $\cali{N}$. A governing convention is the vectorial notation: frequently we abbreviate a tuple $(a_1,\cdots,a_k)$ as simply $a$, so we have $a+b = (a_1+b_1,\cdots,a_k+b_k)$ and $a\cdot b = \sum_{1\le i\le k}a_i b_i$. Moreover, if $a$ and $b$ are two tuples, we understand $a\le b$ as inequalities for all components, and the same for $a<b$, etc.; such use will be propagated when $a\in \R^k$ and $b\in \R$, in which case $b$ is understood as the vector $(b,b,\cdots,b)$. If a generator set $V=\{v_1,\cdots,v_k\}$ as above is fixed, for any $z\in F_p$, we always suppose $z$ is of the form $\sum_{1\le i\le k} z_i v_i$ with $z_i\in \Q_p$; this is legitimate since $V$ is also a $\Q_p$-basis of $F_p$. Also put $\cali{O}_{V} =\Z v_1+\cdots+\Z v_k$ and $\cali{O}_{V,p}=\Z_p v_1+\cdots+\Z_p v_k$, for which the previous convention applies by regarding both $\cali{O}_V$ and $\cali{O}_{V,p}$ as subgroups of $F_p$. Finally, we denote by $\1_A$ the indicator function that has value 1 if the condition $A$ is true and otherwise 0, and, given a finite group $G$, denote by $G^\wedge$ the dual group $\Hom(G,\bar{\Q}^\times)$. 
	
	\subsection{Acknowledgement}
	
	The author is greatly indebted to Antonio Lei, for suggesting this line of works to us, for having many insightful discussions, and for reading the many drafts that eventually led to this paper. He is also grateful to Jan Vonk for helpful comments. Finally he thanks the referee for valuable feedback that enhances the presentation of this article.
	
	\section{Preliminaries on $p$-adic measures}
	\label{section.preliminaries}
	
	We give here a quick recapitulation of some background material; more detailed accounts can be found in \cite[§3.6-§3.9]{Hida} and \cite{Ka81}. Recall $\cl_+(p^\infty)$ denotes the completion $\plim_n\cl_+(p^n)$. Following Cassou-Nogu\`es, there exists a $p$-adic measure $\mu_{F,\cali{N}}$ on $\cl_+(p^\infty)$ such that for any Hecke character $\psi$ on $\cl_+(p^\infty)$,
	\begin{align}
		\label{equation.integral-representation-zeta}
		-(1-\psi(\cali{N})\chx{N}^{1-s})L_{F,p}(s,\psi) = \int_{\cl_+(p^\infty)} \psi\omega_F^{-1}(\alpha) \chx{\nm\alpha}^{-s} \mu_{F,\cali{N}}(\alpha).
	\end{align}
	
	When the Hecke character $\chi$ is present, there is a measure $\mu_{F,\chi}$ on $\cl_+(p^\infty)$ such that for all $\psi$,
	\begin{align}
		\label{equation.integral-representation-dirichlet}
		L_{F,p}(s,\chi\psi) = \int_{\cl_+(p^\infty)} \psi\omega_F^{-1}(\alpha)\chx{\nm\alpha}^{-s}\mu_{F,\chi}(\alpha);
	\end{align}
	the removal of the auxiliary Euler factor reflects the regularity of $L_{F,p}(s,\chi\psi)$ at $s=1$. Essentially, the construction of these $p$-adic measures can be summarized in two steps:
	\begin{itemize}
		\item[(i)] Given a tuple $(V,x)$ where $V$ is a generator set and $x\in F$ is $\cali{N}p$-integral, let $t\in \Z_{\ge 0}$ be such that $x\in p^{-t}\cali{O}_{V,p}\subset F_p$ and denote by $V^\dagger=p^{-t}V=\{v_1'= p^{-t}v_1,\cdots,v_k'=p^{-t}v_k\}$ and $\cali{O}_{V^\dagger,p} = p^{-t}\cali{O}_{V,p}$. Then one may construct certain $p$-adic measures $\mu_{V,x,\cali{N}}$ and $\mu_{V,x,\chi}$ on $\cali{O}_{V^\dagger,p}$ that are supported on $x+\cali{O}_{V,p}\subseteq \cali{O}_p$.
		
		\item[(ii)] Let $i_\infty:\cali{O}_p^\times \to \cl_+(p^\infty)$ be the canonical map (see \cite[p.~103, (1b)]{Hida}), and let $g_{i,V}:\cali{O}_{V^\dagger,p}\cap\cali{O}_p^\times \hookrightarrow \cali{O}_p^\times \xrightarrow{i_\infty} \cl_+(p^\infty)\xrightarrow{\got{a}\mapsto \got{a}_i\got{a}} \cl_+(p^\infty)$ be the composition and $(g_{i,V})_*$ the induced pushforward on measures. The constructions in (i) are assembled to form:
		\begin{align}
			\label{equation.push1}
			\mu_{F,\cali{N}} = \sum_{1\le i\le h}\sum_V \sum_{x\in P(V)\cap \got{a}_i^{-1}} (g_{i,V})_*(\mu_{V,x,\cali{N}}|_{\cali{O}_{V^\dagger,p}\cap\cali{O}_p^\times}),
		\end{align}
		
		\begin{align}
			\label{equation.push2}
			\mu_{F,\chi} = \sum_{1\le i\le h}\sum_V\sum_{x\in P(V)\cap \got{a}_i^{-1}} \chi(\got{a}_i)(g_{i,V})_*(\mu_{V,x,\chi}|_{\cali{O}_{V^\dagger,p}\cap\cali{O}_p^\times}).
		\end{align}
	\end{itemize}
	
	We elaborate slightly on the first step. Let $(V,x)$ be given and let $R$ be a finite flat extension of $\Z_p$. By restricting the Amice transform, essentially Cartier duality \`a la Katz \cite[Theorem 1]{Ka81}, $R$-valued measures on $\cali{O}_{V^\dagger,p}$ are in bijection with elements in the formal algebra
	\begin{align*}
		\cali{A}_{V^\dagger} \simeq \plim_n R[t_1,\cdots,t_k]/(t_1^{p^n}-1,\cdots,t_k^{p^n}-1) = R[[t_1-1,\cdots,t_k-1]].
	\end{align*}
	
	Formally we denote the isomorphism by $\Scr{A}: {\rm Mes}(\cali{O}_{V^\dagger,p},R) \xrightarrow{\sim} \cali{A}_{V^\dagger}$, where ${\rm Mes}(\cali{O}_{V^\dagger,p},R)$ stands for the set of measures on $\cali{O}_{V^\dagger,p}$ valued in $R$. In turn, the constructions of $\mu_{V,x,\cali{N}}$ and $\mu_{V,x,\chi}$ can be achieved by manufacturing certain power series $f_{V,x,\cali{N}}$ and $f_{V,x,\chi}$, respectively. We postpone the minutiae of these power series, in fact rational functions, to individual sections below. For now, we are content to record a general formula to be used for period computations. For ease of notation, given $\alpha = \alpha'_1v'_1+\cdots+\alpha'_kv'_k\in \cali{O}_{V^\dagger,p}$, denote by $t^\alpha$ the monomial $t_1^{\alpha'_1}t_2^{\alpha'_2}\cdots t_k^{\alpha'_k}\in \cali{A}_{V^\dagger}$. For any $\varphi\in (\cali{O}_{V^\dagger,p}/p^n)^\wedge$, we define the evaluation $\cdot|_{\varphi}$ on $\cali{A}_{V^\dagger}$ by dictating $t^\alpha|_{\varphi}= \varphi(\alpha)$ for all $\alpha\in\cali{O}_{V^\dagger,p}$.
	
	\begin{prop}
		\label{prop.period-formula-setup}
		Let $\mu\in {\rm Mes}(\cali{O}_{V^\dagger,p},R)$. For all $a\in \cali{O}_{V^\dagger,p}$ and $n\in \Z_{\ge 0}$, we have
		\begin{align*}
			\mu(a+p^n\cali{O}_{V,p}) = \frac{1}{p^{(n+t)k}}\sum_{\varphi\in(\cali{O}_{V^\dagger,p}/p^{n+t})^\wedge} \varphi^{-1}(a)\Scr{A}_{\mu}|_{\varphi}.
		\end{align*}
	\end{prop}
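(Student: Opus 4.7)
The proposition is essentially an instance of finite Fourier inversion combined with the defining property of the Amice transform, so the plan is to unpack both ingredients and combine them.

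First I would normalize the coset. Since $\cali{O}_{V^\dagger,p}=p^{-t}\cali{O}_{V,p}$, multiplication by $p^t$ is an isomorphism, so $p^{n}\cali{O}_{V,p}=p^{n+t}\cali{O}_{V^\dagger,p}$. Thus $a+p^n\cali{O}_{V,p}$ is a coset of the subgroup $p^{n+t}\cali{O}_{V^\dagger,p}\subset \cali{O}_{V^\dagger,p}$, and the quotient $G:=\cali{O}_{V^\dagger,p}/p^{n+t}\cali{O}_{V^\dagger,p}$ is a finite abelian group of order $p^{(n+t)k}$, whose dual is exactly $(\cali{O}_{V^\dagger,p}/p^{n+t})^\wedge$.

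Next I would apply Fourier inversion on $G$. By character orthogonality the indicator function of the coset $a+p^{n+t}\cali{O}_{V^\dagger,p}$ can be written as
\begin{align*}
\1_{a+p^{n+t}\cali{O}_{V^\dagger,p}}(\alpha)=\frac{1}{p^{(n+t)k}}\sum_{\varphi\in (\cali{O}_{V^\dagger,p}/p^{n+t})^\wedge}\varphi^{-1}(a)\varphi(\alpha).
\end{align*}
Integrating both sides against $\mu$ and interchanging the finite sum with the integral yields
\begin{align*}
\mu(a+p^n\cali{O}_{V,p})=\frac{1}{p^{(n+t)k}}\sum_{\varphi}\varphi^{-1}(a)\int_{\cali{O}_{V^\dagger,p}}\varphi(\alpha)\,d\mu(\alpha).
\end{align*}

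The remaining step is to identify $\int_{\cali{O}_{V^\dagger,p}}\varphi\,d\mu$ with $\Scr{A}_\mu|_\varphi$. This is built into the Amice–Katz formalism: writing $\alpha=\sum_i \alpha'_iv'_i$ with $\alpha'_i\in\Z_p$ and expanding $\varphi(\alpha)=\prod_i\varphi(v'_i)^{\alpha'_i}$, the character $\varphi$ corresponds to the substitution $t_i\mapsto\varphi(v'_i)$, which is precisely the evaluation $t^\alpha|_\varphi=\varphi(\alpha)$ extended by continuity from monomials to $\cali{A}_{V^\dagger}=R[[t_1-1,\ldots,t_k-1]]$ (note $\varphi(v'_i)$ is a $p^{n+t}$-th root of unity, hence $\varphi(v'_i)-1$ is topologically nilpotent, so the substitution converges). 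Under the Amice transform $\Scr{A}$, the measure $\mu$ corresponds to the series whose value at $t=(\varphi(v'_1),\ldots,\varphi(v'_k))$ is, by definition (see \cite[Theorem 1]{Ka81}), exactly $\int\varphi\,d\mu$. Substituting this identification into the displayed formula completes the proof.

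The only real content is thus the Cartier-duality characterization of $\Scr{A}$; the rest is finite Fourier analysis and the bookkeeping of the factor $p^{-t}$ relating $V$ and $V^\dagger$. I do not anticipate a genuine obstacle — the main point to be careful about is making sure the normalization of $\Scr{A}$ used in the paper matches the normalization under which $\varphi$ is recovered by $t_i\mapsto\varphi(v'_i)$ rather than, say, $t_i\mapsto\varphi(v_i)$, which is why the exponent $p^{(n+t)k}$ (and not $p^{nk}$) appears.
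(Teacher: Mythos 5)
Your proof is correct, and it is exactly the standard argument the paper implicitly relies on (the proposition is stated without proof as a consequence of the Amice--Katz formalism): finite Fourier inversion on $\cali{O}_{V^\dagger,p}/p^{n+t}\cali{O}_{V^\dagger,p}$ together with the identity $\Scr{A}_\mu|_\varphi=\int\varphi\,d\mu$, which follows from the paper's convention $t^\alpha|_\varphi=\varphi(\alpha)$. Your bookkeeping of $p^n\cali{O}_{V,p}=p^{n+t}\cali{O}_{V^\dagger,p}$ and of the normalizing factor $p^{(n+t)k}$ is also right.
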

	\begin{proof}
		The $p$-adic module $\cali{O}_{V^\dagger,p}$ comes equipped with a basis $\cali{O}_{V^\dagger,p}\simeq \oplus_{1\le i\le k} \Z_p \cdot v_i'$, and the Amice transform for the formal torus of $\cali{A}_{V^\dagger}$ respects this splitting \cite[proof of Theorem 3.7.1]{Hida}, i.e., the isomorphism \textit{loc.~cit.}~is given by taking the completion of the tensor of the one-dimensional isomorphisms $R[[t_i-1]]\simeq \mes(\Z_p\cdot v_i',R)$. The formula then follows from that in the one-dimensional case, which can be found in p.~84, \textit{ibid}. (Note here that $p^n\cali{O}_{V,p} = p^{n+t}\cali{O}_{V^\dagger,p}$.)
	\end{proof}
	
	\begin{rem}
		As pointed out by Remark \ref{rem.O_Vp}, the passage from $V$ to $V^\dagger$ is unnecessary for all but finitely many primes $p$.
	\end{rem}
	
	\section{Explicit period formula: the zeta case}
	\label{section.period1}
	
	Given $x\in F$ that is $\cali{N}p$-integral and $V=\{v_1,\cdots,v_k\}$, the power series that corresponds to $\mu_{V,x,\cali{N}}$ is given by \cite[§3.8]{Hida}
	\begin{align*}
		f_{V,x,\cali{N}}(t) = \sum_{\xi\ne 1\in (\cali{O}/\cali{N})^\wedge} \frac{\xi(x)t^x}{\prod_{1\le i\le k}(1-\xi(v_i) t^{v_i})} \in \cali{A}_{V^\dagger},
	\end{align*}
	and we are interested in computing
	\begin{align}
		\label{equation.period-define}
		\mu_{V,x,\cali{N}}(a+p^n\cali{O}_{V,p}) = \frac{1}{p^{(n+t)k}}\sum_{\varphi\in(\cali{O}_{V^\dagger,p}/p^{n+t})^\wedge} \varphi^{-1}(a)f_{V,x,\cali{N}}|_\varphi.
	\end{align}
	For this purpose, we break $f_{V,x,\cali{N}}$ up as the sum $\sum_{\xi\ne 1\in (\cali{O}/\cali{N})^\wedge} f_{V,x,\cali{N},\xi}$, where $f_{V,x,\cali{N},\xi} = \frac{\xi(x)t^x}{\prod_{1\le i\le k}(1-\xi(v_i)t^{v_i})}$, and we compute periods of corresponding measures $\mu_{V,x,\cali{N},\xi}$ individually.
	
	\subsection{First step}
	
	Set
	\begin{align*}
		R_{V,x}(a,p^n) =\left\{y= x+\sum_{1\le i\le k}l_iv_i\in F: l_i\in\Z, 0\le l_i<p^n, y-a \in p^n\cali{O}_{V,p}\right\}.
	\end{align*}
	We prove
	\begin{lem} 
		\label{lem.period-step1}
		Suppose $\xi\in (\cali{O}/\cali{N})^\wedge$ is nontrivial. Then 
		\begin{align}
			\label{equation.period-step1}
			\mu_{V,x,\cali{N},\xi}(a+p^n\cali{O}_{V,p}) = \sum_{y\in R_{V,x}(a,p^n)}\frac{\xi(y)}{\prod_{1\le i\le k}(1-\xi(p^n v_i))}.
		\end{align}
	\end{lem}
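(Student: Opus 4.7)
The plan is a direct computation starting from the period formula in Proposition \ref{prop.period-formula-setup}: since the Amice transform of $\mu_{V,x,\cali{N},\xi}$ is $f_{V,x,\cali{N},\xi}$, we have
\begin{align*}
\mu_{V,x,\cali{N},\xi}(a+p^n\cali{O}_{V,p})
= \frac{1}{p^{(n+t)k}}\sum_{\varphi\in(\cali{O}_{V^\dagger,p}/p^{n+t})^\wedge} \varphi^{-1}(a)\,\frac{\xi(x)\varphi(x)}{\prod_{1\le i\le k}(1-\xi(v_i)\varphi(v_i))}.
\end{align*}
The idea is to break each geometric factor in the denominator into a \emph{finite} geometric sum of length $p^n$, which will produce exactly the indexing lattice points $l_i\in[0,p^n)$ appearing in $R_{V,x}(a,p^n)$, after which character orthogonality collapses the sum over $\varphi$ to the desired indicator.

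First I would observe that since $v_i\in \cali{O}_{V,p}=p^t\cali{O}_{V^\dagger,p}$ and $\varphi$ is trivial on $p^{n+t}\cali{O}_{V^\dagger,p}$, one has $\varphi(v_i)^{p^n}=\varphi(p^{n+t}\cdot p^{-t}v_i)=1$. Hence for each $i$ the finite geometric series identity yields
\begin{align*}
\frac{1}{1-\xi(v_i)\varphi(v_i)} = \frac{1}{1-\xi(p^nv_i)\varphi(v_i)^{p^n}}\sum_{l_i=0}^{p^n-1}\bigl(\xi(v_i)\varphi(v_i)\bigr)^{l_i} = \frac{1}{1-\xi(p^nv_i)}\sum_{l_i=0}^{p^n-1}\xi(v_i)^{l_i}\varphi(v_i)^{l_i},
\end{align*}
where the first equality uses $\xi(v_i)^{p^n}=\xi(p^nv_i)$ and the second uses $\varphi(v_i)^{p^n}=1$. (The factor $1-\xi(p^nv_i)$ is nonzero precisely because $\xi$ has $N$-power order with $N$ prime to $p$, so $\xi(p^nv_i)=1$ would already have forced $\xi(v_i)=1$, contradicting invertibility of $f_{V,x,\cali{N},\xi}$ in $\cali{A}_{V^\dagger}$.)

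Taking the product over $i$ and using additivity of $\xi$ (legitimate under the Cassou-Nogu\`es condition, which makes $\xi$ a character of the additive group $\cali{O}/\cali{N}\simeq\Z/N$), I would combine $\xi(x)\prod_i\xi(v_i)^{l_i}=\xi(y)$ and $\varphi(x)\prod_i\varphi(v_i)^{l_i}=\varphi(y)$ with $y=x+\sum l_iv_i$, obtaining
\begin{align*}
\mu_{V,x,\cali{N},\xi}(a+p^n\cali{O}_{V,p}) = \frac{1}{\prod_{1\le i\le k}(1-\xi(p^nv_i))}\sum_{0\le l_1,\dots,l_k<p^n}\xi(y)\cdot\frac{1}{p^{(n+t)k}}\sum_{\varphi}\varphi(y-a).
\end{align*}

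The final step is pure character theory on the finite abelian group $\cali{O}_{V^\dagger,p}/p^{n+t}$: the inner average equals $\1_{y-a\in p^{n+t}\cali{O}_{V^\dagger,p}}=\1_{y-a\in p^n\cali{O}_{V,p}}$, which is exactly the defining condition for $y\in R_{V,x}(a,p^n)$. Substituting and recognizing the resulting sum recovers \eqref{equation.period-step1}. No step appears to present a genuine obstacle; the only delicate point is ensuring the denominator $1-\xi(p^nv_i)$ does not vanish, which is guaranteed by the formal-series invertibility of $f_{V,x,\cali{N},\xi}$ already built into the construction.
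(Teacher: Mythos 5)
Your proof is correct, and it takes a genuinely different route from the paper's. The paper proves \eqref{equation.period-step1} over $\C$ by inserting an auxiliary real parameter $u\in(0,1)$, expanding each factor $\frac{1}{1-u^{v_i}\xi(v_i)\varphi(v_i)}$ as an absolutely convergent infinite geometric series, applying orthogonality to the resulting lattice sum, regrouping it as a sum over $R_{V,x}(a,p^n)$ of geometric series in $u^{p^nv_i}\xi(p^nv_i)$, and finally letting $u\to1^-$. You instead use the finite telescoping identity $\frac{1}{1-z}=\frac{1+z+\cdots+z^{p^n-1}}{1-z^{p^n}}$ for the root of unity $z=\xi(v_i)\varphi(v_i)$, exploiting $\varphi(v_i)^{p^n}=\varphi(p^{n+t}v_i')=1$; this keeps every sum finite, so orthogonality on $\cali{O}_{V^\dagger,p}/p^{n+t}$ applies directly and no limiting argument or interchange of infinite sums is needed. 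What your route buys is the elimination of the analytic regularization; what it requires in exchange is the explicit nonvanishing of $1-\xi(p^nv_i)$, which you correctly reduce to $\xi(v_i)\ne1$ --- a fact more transparently justified by noting that $v_i$ is prime to $\cali{N}$, hence a unit in $\cali{O}/\cali{N}\simeq\Z/N$, hence an additive generator on which a nontrivial $\xi$ cannot be trivial (your appeal to ``invertibility of $f_{V,x,\cali{N},\xi}$'' is circular-sounding, though the underlying fact is true). One small point worth making explicit is that evaluating the rational function $f_{V,x,\cali{N},\xi}$ factorwise at $\varphi$ is legitimate because $\cdot|_\varphi$ is a ring homomorphism and each denominator $1-\xi(v_i)t^{v_i}$ is a unit of $\cali{A}_{V^\dagger}$ whose image $1-\xi(v_i)\varphi(v_i)$ is nonzero, being one minus the product of a nontrivial prime-to-$p$ root of unity and a $p$-power root of unity.
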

	
	\begin{proof}
		Using Proposition \ref{prop.period-formula-setup} it suffices to prove the equality in $\bar{\Q}$, thus in $\C$. Introduce an auxiliary real parameter $0<u<1$, and for $y\in F$, write $u^y = u^{\sigma_1(y)}$ with $\sigma_1$ from §\ref{subsection.intro.review-shintani}. Then we have
		\begin{align*}
			\frac{1}{p^{(n+t)k}}\sum_{\varphi\in(\cali{O}_{V^\dagger,p}/p^{n+t})^\wedge} \varphi^{-1}(a) f_{V,x,\cali{N},\xi}|_\varphi = \lim_{u\to 1^-} \frac{1}{p^{(n+t)k}}\sum_{\varphi\in(\cali{O}_{V^\dagger,p}/p^{n+t})^\wedge} \frac{u^x\xi(x)\varphi(x-a)}{\prod_{1\le i\le k}(1-u^{v_i}\xi(v_i)\varphi(v_i))}.
		\end{align*}
		
		Next, identify $(\Z/p^{n+t})^k$ with $\Hom(\cali{O}_{V^\dagger,p}/p^{n+t}, \Z/p^{n+t})$ by the pairing $\pair{z}{w} = \sum_{1\le i\le k}z'_iw_i$, where $z=\sum_{1\le i\le k}z'_iv'_i\in \cali{O}_{V^\dagger,p}/p^{n+t}$ and $w\in (\Z/p^{n+t})^k$. Choose a primitive $p^{n+t}$-th root of unity $\zeta$ and further identity $(\Z/p^{n+t})^k$ with $\Hom(\cali{O}_{V^\dagger,p}/p^{n+t}, \mu_{p^{n+t}})$, so $w(z) = \zeta^{\chx{w,z}}$ for $w,z$ as before. We then find
		\begin{align*}
			\frac{1}{p^{(n+t)k}}\sum_{\varphi\in(\cali{O}_{V^\dagger,p}/p^{n+t})^\wedge} \frac{u^x\xi(x)\varphi(x-a)}{\prod_{1\le i\le k}(1-u^{v_i}\xi(v_i)\varphi(v_i))}
			&= \frac{1}{p^{(n+t)k}} \sum_{0\le w_1,\cdots,w_k<p^{n+t}}
			\frac{u^x\xi(x)\zeta^{\pair{x-a}{w}}}{\prod_{1\le i\le k} (1-u^{v_i}\xi(v_i)\zeta^{\chx{v_i,w}})}\\
			&= \frac{1}{p^{(n+t)k}} \sum_{0\le w_1,\cdots,w_k<p^{n+t}}
			\sum_{l_1,\cdots,l_k\ge 0} u^{x+l\cdot v} \xi(x+l\cdot v) \zeta^{\pair{x-a+l\cdot v}{w}}\\
			&=\sum_{\substack{l_1,\cdots,l_k\ge 0\\ x+l\cdot v- a\in p^{n+t}\cali{O}_{V^\dagger,p}}} u^{x+l\cdot v} \xi(x+l\cdot v)\\
			&=\sum_{y\in R_{V,x}(a,p^n)} \frac{u^y\xi(y)}{\prod_{1\le i\le k}(1-u^{p^nv_i}\xi(p^n v_i))}\\
			&\to \sum_{y\in R_{V,x}(a,p^n)} \frac{\xi(y)}{\prod_{1\le i\le k}(1-\xi(p^n v_i))}\qquad(u\to 1^-).
		\end{align*}
		Here in the third equality we used the fact that for $z\in \cali{O}_{V^\dagger,p}/p^{n+t}\cali{O}_{V^\dagger,p}$, $\sum_{0\le w<p^{n+t}}\zeta^{\chx{z,w}} = p^{n+t}\1_{z=0}$.
	\end{proof}
	
	\begin{rem}
		\label{rem.R-singleton}
		The set $R_{V,x}(a,p^n)$ is nonempty if and only if $a+p^n\cali{O}_{V,p} = x+l\cdot v+p^n\cali{O}_{V,p}$ for some $0\le l<p^n$, \textit{i.e.}, $a+p^n\cali{O}_{V,p}\subseteq x+\cali{O}_{V,p}$. Suppose this is the case. Then, as $V$ is also a $\Q_p$-basis of $F_p$, $R_{V,x}(a,p^n)$ is the singleton $\{x+l\cdot v\}$.
	\end{rem}
	
	\subsection{Second step}
	
	To state the result below, we need some notation. For any $y\in F$ that is $\cali{N}$-integral, set
	\begin{align*}
		R(y,\cali{N}) = R_V(y,\cali{N}) = \left\{z = \sum_{1\le i\le k}z_iv_i\in F:z_i\in \Z, 0\le z_i<N, z-y\in \cali{N}\right\}.
	\end{align*}

	Also, we define the coefficients $\{b_i\}_{0\le i\le k}\subset \Z_p$ by the expansion:
	\begin{align*}
		N^k\left(\frac{1-u}{1-u^N}\right)^k = b_0 +b_1(u-1)+\cdots+ b_k(u-1)^k + O(u-1)^{k+1};
	\end{align*}
	note that $b_0=1$. Finally, for $z=\sum_{1\le i\le k} z_iv_i\in F$, write $\tilde{z} = \sum_{1\le i\le k} z_i$. 
	\begin{lem}
		\label{lem.coefficients-unknown}
		We have
		\begin{align*}
			\mu_{V,x,\cali{N}}(a+p^n\cali{O}_{V,p}) = \frac{(-1)^k}{N^{k-1}}
			\sum_{y\in R_{V,x}(a,p^n)}
			\sum_{z\in R(-y/p^n,\cali{N})}
			\left[b_k + b_{k-1}\binom{\tilde{z}}{1} +\cdots + b_0\binom{\tilde{z}}{k}\right].
		\end{align*}
	\end{lem}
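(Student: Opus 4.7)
The plan is to combine Lemma \ref{lem.period-step1} (summed over $\xi\ne 1$) with a short Fourier identity on $\Z/N$. By Lemma \ref{lem.period-step1} it suffices to prove, for each fixed $y\in R_{V,x}(a,p^n)$, the scalar identity
\begin{align*}
T(y):=\sum_{\xi\ne 1}\frac{\xi(y)}{\prod_{i=1}^k(1-\xi(p^nv_i))}=\frac{(-1)^k}{N^{k-1}}\sum_{z\in R(-y/p^n,\cali{N})}\left[b_k+b_{k-1}\binom{\tilde{z}}{1}+\cdots+b_0\binom{\tilde{z}}{k}\right].
\end{align*}
Using the Cassou-Nogu\`es isomorphism $\cali{O}/\cali{N}\simeq\Z/N$ and a primitive $N$th root of unity $\zeta_N$, I would parametrize characters $\xi$ by $m\in\{1,\ldots,N-1\}$ and set $c_i:=\overline{p^nv_i}\in(\Z/N)^\times$ (which is a unit since $p$ and $v_i$ are prime to $\cali{N}$), so that $T(y)=\sum_{m=1}^{N-1}\zeta_N^{m\bar{y}}/\prod_i(1-\zeta_N^{mc_i})$; under the same identification $R(-y/p^n,\cali{N})$ corresponds to $\{z\in\{0,\ldots,N-1\}^k:\sum_iz_ic_i\equiv-\bar{y}\pmod{N}\}$.

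The key intermediate step is to introduce an auxiliary formal variable $u$ and establish the rational-function identity
\begin{align*}
\frac{N\sum_{z\in R(-y/p^n,\cali{N})}u^{\tilde{z}}}{(1-u^N)^k}=\sum_{m=0}^{N-1}\frac{\zeta_N^{m\bar{y}}}{\prod_i(1-\zeta_N^{mc_i}u)}.
\end{align*}
This follows from a short orthogonality computation: expand the indicator $\1_{\sum_iz_ic_i\equiv-\bar{y}\bmod N}$ as $\frac{1}{N}\sum_{m=0}^{N-1}\zeta_N^{m(\sum_iz_ic_i+\bar{y})}$, plug in on the left, swap sums, and evaluate each inner geometric sum $\sum_{z_i=0}^{N-1}(\zeta_N^{mc_i}u)^{z_i}=(1-u^N)/(1-\zeta_N^{mc_i}u)$ using $(\zeta_N^{mc_i}u)^N=u^N$.

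Finally, I would match constant Laurent coefficients at $u=1$ on both sides. On the right, each $c_i\in(\Z/N)^\times$ forces $1-\zeta_N^{mc_i}u$ to be nonzero at $u=1$ for $m\ne 0$, so those summands are regular at $u=1$ with total value precisely $T(y)$; the lone $m=0$ term $(1-u)^{-k}$ is purely polar of order $k$ and contributes nothing to the constant coefficient. On the left, I would factor $(1-u^N)^k=(-1)^k(u-1)^k(1+u+\cdots+u^{N-1})^k$, expand $u^{\tilde{z}}=\sum_j\binom{\tilde{z}}{j}(u-1)^j$, and invoke the defining expansion $N^k\bigl((1-u)/(1-u^N)\bigr)^k=\sum_jb_j(u-1)^j+O((u-1)^{k+1})$; a Cauchy-product reading of the $(u-1)^k$-coefficient produces $\frac{1}{N^k}\sum_z[b_k+b_{k-1}\binom{\tilde{z}}{1}+\cdots+b_0\binom{\tilde{z}}{k}]$, which after incorporating the $(-1)^k$ and $N$ factors matches the right-hand side. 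Every ingredient is routine; the only delicate point is the bookkeeping at $u=1$, i.e.\ cleanly isolating the order-$k$ polar part from the constant part on both sides and confirming that the $m\ne 0$ summands on the right really collapse to $T(y)$. Since all identities live in $\bar{\Q}$, the analysis may be carried out in $\C$ and transferred to $\C_p$ via the fixed embeddings.
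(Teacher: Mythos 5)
Your proposal is correct and follows essentially the same route as the paper: both reduce to Lemma \ref{lem.period-step1} summed over $\xi\ne 1$, use orthogonality of characters mod $N$ to convert the character sum into the congruence condition defining $R(-y/p^n,\cali{N})$, and extract the constant term at $u=1$ via the defining expansion of the $b_j$. The only difference is presentational — the paper subtracts the trivial-character term $1/(1-u)^k$ and takes $u\to 1^-$ of the resulting regular function, while you phrase the identical computation as matching constant Laurent coefficients of a rational-function identity.
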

	
	\begin{proof}
		Again it suffices to prove the identity over $\bar{\Q}$, thus over $\C$, by summing \eqref{equation.period-step1} over all nontrivial $\xi\in (\cali{O}/\cali{N})^\wedge$. Let $u\in (0,1)$ be a real parameter. Then
		\begin{align*}
			\mu_{V,x,\cali{N}}(a+p^n\cali{O}_{V,p}) = \lim_{u\to 1^-} \sum_{\xi\ne 1}\sum_{y\in R_{V,x}(a,p^n)} \frac{\xi(y)}{\prod_{1\le i\le k}(1-u\xi(p^n v_i))}.
		\end{align*}
		Before taking the limit, we have
		\begin{align*}
			\sum_{\xi\ne 1}\sum_{y\in R_{V,x}(a,p^n)} \frac{\xi(y)}{\prod_{1\le i\le k}(1-u\xi(p^n v_i))}
			&= \sum_{\xi\ne 1}\sum_{y\in R_{V,x}(a,p^n)}\sum_{l_1,\cdots,l_k\ge 0} u^{l_1+\cdots+l_k}\xi(y+p^n l\cdot v)\\
			&= \sum_{y\in R_{V,x}(a,p^n)} \left[\sum_{\substack{l_1,\cdots,l_k\ge 0\\ y+p^n l\cdot v\in \cali{N}}} Nu^{l_1+\cdots+l_k} - \frac{1}{(1-u)^k}\right]\\
			&= \sum_{y\in R_{V,x}(a,p^n)} \left[-\frac{1}{(1-u)^k} +N\sum_{z\in R(-y/p^n,\cali{N})}\frac{u^{\tilde{z}}}{(1-u^N)^k}\right].
		\end{align*}
		Since the above rational function is regular at $u=1$, after we take the limit $u\to 1^-$, only the constant term survives. Therefore it boils down to computing the degree zero term of
		\begin{align*}
			N\frac{u^{\tilde{z}}}{(1-u^N)^k}&= \frac{1}{N^{k-1}(1-u)^k}\cdot u^{\tilde{z}} N^k\left(\frac{1-u}{1-u^N}\right)^k\\
			&=\frac{1}{N^{k-1}(1-u)^k} \sum_{0\le i\le k} \binom{\tilde{z}}{i}(u-1)^i
			\sum_{0\le j\le k}b_j (u-1)^j + O(u-1),
		\end{align*}
		which is clearly $\frac{(-1)^k}{N^{k-1}}\sum_{0\le i\le k}b_i\binom{\tilde{z}}{k-i}$.
	\end{proof}
	
	\subsection{Final step}
	
	\begin{lem}
		\label{lem.sum-independence}
		For all $0\le i\le k$, there exist polynomials $P_i(X)\in \Q[X]$, such that for any Cassou-Nogu\`es ideal $\cali{N}$ and any $y\in \cali{O}/\cali{N}$,
		\begin{enumerate}
			\item[a)] if $0\le i< k$, then
			\begin{align*}
				\sum_{z\in R(y,\cali{N})} \binom{\tilde{z}}{i} = P_i(N);
			\end{align*}
		
			\item[b)] if $i=k$, then
			\begin{align*}
				\sum_{z\in R(y,\cali{N})} \binom{\tilde{z}}{k} = \sum_{\substack{1\le d_1,d_2,\cdots,d_k<N\\ d_1v_1+\cdots+d_kv_k\equiv y \bmod \cali{N}}}d_1d_2\cdots d_k + P_k(N).
			\end{align*}
		\end{enumerate}
	\end{lem}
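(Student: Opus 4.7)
The plan is to decompose $\binom{\tilde z}{i}$ by the multinomial Chu--Vandermonde identity
\[ \binom{z_1+\cdots+z_k}{i} = \sum_{\substack{d_1+\cdots+d_k=i\\ d_j\ge 0}} \binom{z_1}{d_1}\cdots \binom{z_k}{d_k}, \]
so that the summation over $R(y,\cali{N})$ splits as
\[ \sum_{z\in R(y,\cali{N})} \binom{\tilde z}{i} = \sum_{\substack{d_1+\cdots+d_k=i\\ d_j\ge 0}} S_d(y,\cali{N}),\qquad S_d(y,\cali{N}):=\sum_{z\in R(y,\cali{N})} \prod_{j=1}^k\binom{z_j}{d_j}. \]

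The key observation is that whenever the index tuple $d=(d_1,\ldots,d_k)$ has some zero entry $d_{j_0}=0$, the inner sum $S_d(y,\cali{N})$ is a polynomial in $N$ alone, independent of $y$, $V$, and $\cali{N}$. This relies on the third presiding assumption in §\ref{subsection.intro.main-results} that each $v_j$ be prime to $\cali{N}$: under the Cassou-Nogu\`es condition $\cali{O}/\cali{N}\simeq \Z/N$, every $v_j$ becomes a unit modulo $\cali{N}$, so the defining congruence $\sum z_j v_j\equiv y\bmod \cali{N}$ determines $z_{j_0}\in[0,N)$ uniquely once the other $z_l$'s are chosen freely in $[0,N)$. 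Since $\binom{z_{j_0}}{0}=1$, the inner sum decouples, and the hockey-stick identity $\sum_{z=0}^{N-1}\binom{z}{d}=\binom{N}{d+1}$ yields
\[ S_d(y,\cali{N}) = \prod_{l\ne j_0} \binom{N}{d_l+1}, \]
which is manifestly a polynomial in $N$ with rational coefficients.

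Part (a) then follows by pigeonhole: when $i<k$, every tuple with $\sum d_j = i$ must have some $d_j=0$, so all $S_d(y,\cali{N})$ are of the above form, and summing over tuples produces the desired $P_i(N)\in\Q[X]$. For part (b), the only tuple with $\sum d_j=k$ and all $d_j\ge 1$ is $d=(1,\ldots,1)$; its contribution $\sum_{z\in R(y,\cali{N})} z_1\cdots z_k$ matches the advertised main term $\sum_{1\le d_j<N,\,\sum d_jv_j\equiv y\bmod\cali{N}} d_1\cdots d_k$, since the summands with any $z_j=0$ vanish and allow the range to be tightened to $1\le z_j<N$ (relabelled $d_j$). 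All other tuples have a zero entry and thus combine into $P_k(N)$ by the previous paragraph.

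The argument is essentially a bookkeeping exercise, and I do not foresee any serious obstacle; the only substantive input is the use of the Cassou-Nogu\`es condition together with the coprimality of the $v_j$'s to $\cali{N}$, which decouples the single modular constraint defining $R(y,\cali{N})$ from the free coordinates and thereby erases any dependence of the polynomials $P_i$ on $y$, $V$, or $\cali{N}$.
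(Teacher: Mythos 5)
Your proposal is correct and follows essentially the same route as the paper: the multinomial Vandermonde expansion of $\binom{\tilde z}{i}$, the observation that a zero index $d_{j_0}=0$ lets the congruence (via the Cassou-Nogu\`es condition and the coprimality of $v_{j_0}$ to $\cali{N}$) solve for $z_{j_0}$ so the remaining coordinates range freely, and the hockey-stick identity to evaluate the decoupled sums as products of binomial coefficients in $N$. The isolation of the all-ones tuple as the sole source of the main term in part (b) is likewise exactly the paper's argument.
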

	
	\begin{proof}
		Write $z = z_1v_1+\cdots+z_kv_k$, so $\tilde{z} = z_1+\cdots+z_k$. We have
		\begin{align*}
			\binom{z_1+\cdots+z_k}{i} = \sum_{i_1+\cdots+i_k=i} \binom{z_1}{i_1}\binom{z_2}{i_2}\cdots\binom{z_k}{i_k},
		\end{align*}
		where the sum is over all nonnegative tuples $(i_1,\cdots,i_k)$ with $i_1+\cdots+i_k = i$. Thus the study of the sum $\sum_{z\in R(y,\cali{N})} \binom{\tilde{z}}{i}$ boils down to that of $\sum_{z\in R(y,\cali{N})} \binom{z_1}{i_1}\cdots\binom{z_k}{i_k}$ for each tuple $(i_1,\cdots,i_k)$. 
		
		\vspace{3mm}
		
		Assume first some $i_r=0$; without loss of generality say $r=k$. Note that this assumption is automatic if $i<k$. In this case, consider the following parametrization of $R(y,\cali{N})$:
		\begin{align*}
			\left\{d_1v_1+d_2v_2+\cdots+d_{k-1}v_{k-1} +\left(\frac{y-d_1v_1-\cdots-d_{k-1}v_{k-1}}{v_k}\right)^\flat_{\cali{N}}v_k \in F: 0\le d_1,d_2,\cdots,d_{k-1}<N\right\}.
		\end{align*}
		Using this, we find
		\begin{align*}
			\sum_{z\in R(y,\cali{N})}\binom{z_1}{i_1}\cdots\binom{z_k}{i_k}&=\sum_{z\in R(y,\cali{N})}\binom{z_1}{i_1}\cdots\binom{z_{k-1}}{i_{k-1}}\\
			&=\sum_{0\le d_1,\cdots,d_{k-1}<N} \binom{d_1}{i_1}\cdots\binom{d_{k-1}}{i_{k-1}}\\
			&=\binom{N}{i_1+1}\cdots\binom{N}{i_{k-1}+1}.
		\end{align*}
		As such, for $0\le i<k$, we conclude that
		\begin{align*}
			P_i(X) = \frac{1}{X}\sum_{i_1+\cdots+i_k=i} \binom{X}{i_1+1}\cdots\binom{X}{i_k+1}.
		\end{align*}
		
		\vspace{3mm}
		
		As for the remaining case that none of $i_r$ is $0$, we must have $i=k$ and $i_1=i_2\cdots=i_k = 1$. In turn, 
		\begin{align*}
			\sum_{z\in R(y,\cali{N})} \binom{z_1}{1}\cdots\binom{z_k}{1} = \sum_{\substack{1\le d_1,d_2,\cdots,d_k<N\\ d_1v_1+\cdots+d_kv_k\equiv y \bmod \cali{N}}}d_1d_2\cdots d_k.
		\end{align*}
		It also follows that
		\begin{align*}
			P_k(X) = \frac{1}{X}\sum_{\substack{i_1+\cdots+i_k=k\\i_1i_2\cdots i_k= 0}} \binom{X}{i_1+1}\cdots\binom{X}{i_k+1}.
		\end{align*}
	\end{proof}
	
	Combining Lemmas \ref{lem.coefficients-unknown} and \ref{lem.sum-independence}, we conclude that 
	\begin{align*}
		\mu_{V,x,\cali{N}}(a+p^n\cali{O}_{V,p}) = \frac{(-1)^k}{N^{k-1}}\sum_{y\in R_{V,x}(a,p^n)} \left[\sum_{\substack{1\le d_1,d_2,\cdots,d_k<N\\ d_1v_1+\cdots+d_kv_k\equiv -y/p^n \bmod \cali{N}}} d_1d_2\cdots d_k + P(N)\right],
	\end{align*}
	for some polynomial $P(X)$ independent of $a,n,y$. To ease notation, denote by
	\begin{align}
		H_V(y) = \sum_{\substack{1\le d_1,d_2,\cdots,d_k<N\\ d_1v_1+\cdots+d_kv_k\equiv -y \bmod \cali{N}}} d_1d_2\cdots d_k.
	\end{align}
	
	\begin{lem}
		For the polynomial $P(X)$ as above, we have
		\begin{align*}
			P(N) = -N^{k-1}\left(\frac{N-1}{2}\right)^k.
		\end{align*}
	\end{lem}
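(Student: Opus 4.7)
The plan is to extract $P(N)$ as an explicit finite sum by carefully combining Lemmas \ref{lem.coefficients-unknown} and \ref{lem.sum-independence}, and then collapse that sum via a generating-function duality between the coefficients $\{b_j\}$ and the polynomials $P_i^*$ defined below. First, since $b_0 = 1$ and the $H_V$-contribution comes entirely from the $\binom{\tilde z}{k}$ term of Lemma \ref{lem.coefficients-unknown}, peeling that off yields
\[
P(N) = \sum_{i=0}^{k} b_{k-i}\, P_i(N).
\]
It is convenient to introduce the unrestricted analogue
\[
P_i^*(X) = \frac{1}{X}\sum_{i_1+\cdots+i_k=i}\binom{X}{i_1+1}\cdots\binom{X}{i_k+1},
\]
which equals $P_i$ for $0\le i<k$, while $P_k^*(X) - P_k(X) = \frac{1}{X}\binom{X}{2}^k$, coming from the unique all-ones tuple excluded by Lemma \ref{lem.sum-independence}(b). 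Consequently
\[
P(N) = \Bigl(\sum_{i=0}^{k} b_{k-i}\, P_i^*(N)\Bigr) - \frac{1}{N}\binom{N}{2}^k.
\]

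The crux is to show that the first term vanishes. Substituting $q = u-1$, the definition of the $b_j$ and the identity $1-u^N = -q\sum_{l=0}^{N-1}\binom{N}{l+1}q^l$ give
\[
\sum_{j\ge 0} b_j q^j = N^k\Bigl(\sum_{l=0}^{N-1}\binom{N}{l+1} q^l\Bigr)^{-k},
\]
whereas a direct multinomial expansion yields
\[
\sum_{i\ge 0} P_i^*(N) q^i = \frac{1}{N}\Bigl(\sum_{l=0}^{N-1}\binom{N}{l+1} q^l\Bigr)^{k}.
\]
Thus the product of these two series is the constant $N^{k-1}$, so for $k \ge 1$ the coefficient of $q^k$ in the product is zero, i.e.\ $\sum_{i=0}^{k} b_{k-i}\, P_i^*(N) = 0$. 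Plugging back in,
\[
P(N) = -\frac{1}{N}\binom{N}{2}^k = -N^{k-1}\Bigl(\frac{N-1}{2}\Bigr)^k.
\]

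The main (modest) obstacle is bookkeeping: one must correctly isolate the $H_V$-term by distinguishing $P_k$ from $P_k^*$, and keep the shift between the indexing of $b_{k-i}$ and $P_i^*$ straight. Once the $b_j$'s and $P_i^*(N)$'s are recognized as Taylor coefficients of reciprocal power series (up to the factor $N^{k-1}$), the vanishing is immediate and no further estimates are needed.
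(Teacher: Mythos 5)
Your proof is correct, but it takes a genuinely different route from the paper's. You compute $P(N)$ head-on from its definition: you correctly identify $P(N)=\sum_{i=0}^{k}b_{k-i}P_i(N)$ from Lemmas \ref{lem.coefficients-unknown} and \ref{lem.sum-independence}, pass to the unrestricted sums $P_i^*$, and observe that $\sum_j b_j q^j$ and $\sum_i P_i^*(N)q^i$ are, up to the factor $N^{k-1}$, reciprocal power series in $q=u-1$ (both being built from $A(q)=\sum_{l=0}^{N-1}\binom{N}{l+1}q^l$), so the convolution $\sum_{i=0}^k b_{k-i}P_i^*(N)$ vanishes for $k\ge 1$ and only the correction term $-\frac1N\binom{N}{2}^k$ from the all-ones tuple survives. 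The paper instead determines $P(N)$ indirectly: it never touches the $b_j$'s again, but exploits the additivity (distribution relation) of the measure $\mu_{V,x,\cali{N}}$ over the cosets $x+l\cdot v+q\cali{O}_{V,p}$ with $q\equiv 1\bmod N$, which forces $H_V(x)=\sum_{l}H_V(x+l\cdot v)+(q^k-1)P(N)$, and then evaluates $\sum_l H_V(x+l\cdot v)$ by a direct count. Your argument is more self-contained and purely combinatorial (no auxiliary $q$, no appeal to the measure property), and it explains structurally why the answer is exactly the all-ones term; the paper's argument is shorter given the machinery already in place and reuses the distribution relation that drives the rest of the construction. Both are valid; no gaps.
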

	
	\begin{proof}
		Recall $q>1$ denotes a power of $p$ with $q\equiv 1\bmod \cali{N}$. Using the additivity of $\mu_{V,x,\cali{N}}$ as a measure, we have
		\begin{align*}
			\mu_{V,x,\cali{N}}(x+\cali{O}_{V,p}) = \sum_{0\le l_1,\cdots,l_k<q} \mu_{V,x,\cali{N}}(x+l\cdot v + q\cali{O}_{V,p}).
		\end{align*}
		Bearing Remark \ref{rem.R-singleton} in mind, we have
		\begin{align*}
			\mu_{V,x,\cali{N}}(x+l\cdot v + q^{\epsilon}\cali{O}_{V,p}) = \frac{(-1)^k}{N^{k-1}}[H_V(x+l\cdot v) +P(N)],
		\end{align*}
		for $\epsilon \in\{0,1\}$ and $0\le l_1,\cdots,l_k<q^\epsilon$. Therefore
		\begin{align*}
			H_V(x) = \sum_{0\le l_1,\cdots,l_k<q} H_V(x+l\cdot v) + (q^k-1)P(N).
		\end{align*}
		A small computation shows
		\begin{align*}
			\sum_{0\le l_1,\cdots,l_k<q}H_V(x+l\cdot v) &= \sum_{0\le l_1,\cdots,l_k<q} \sum_{0\le d_1,\cdots,d_{k-1}<N} d_1\cdots d_{k-1}\left(\frac{-x-l\cdot v-d_1v_1-\cdots -d_{k-1}v_{k-1}}{v_k}\right)^\flat_{\cali{N}}\\
			&=H_V(x) + \sum_{0\le d_1,\cdots,d_{k-1}<N} d_1d_2\cdots d_{k-1}\frac{q-1}{N}\frac{N(N-1)}{2}(q^{k-1} +q^{k-2}+\cdots+1)\\
			&=H_V(x) + N^{k-1}\left(\frac{N-1}{2}\right)^k (q^k-1).
		\end{align*}
		Combine the equations and we find $P(N) = -N^{k-1}(\frac{N-1}{2})^k$.
	\end{proof}

	In summary, we have established the first period formula below.
	
	\begin{thm}
		\label{thm.first-period-formula}
		Let $F/\Q$ be a totally real number field of degree $k$, $p$ be a rational prime and $\cali{N}\ne \cali{O}$ be a Cassou-Nogu\`es ideal prime to $p$. Let $x\in F$ be $\cali{N}p$-integral and $V=\{v_1,\cdots,v_k\}\subset\cali{O}_+$ be a $\Q$-basis of $F$, all of whose elements are prime to $\cali{N}$. Then the attached $p$-adic measure $\mu_{V,x,\cali{N}}$ on $\cali{O}_{V^\dagger,p}$ is valued in $\Z[1/N]$, and we have
		\begin{align}
			\label{equation.period-zeta}
			\mu_{V,x,\cali{N}}(x+l\cdot v+p^n\cali{O}_{V,p}) = (-1)^k\left[\frac{1}{N^{k-1}}H_V\left(\frac{x+l\cdot v}{p^n}\right) - \left(\frac{N-1}{2}\right)^k\right],
		\end{align}
		for all $n\in \Z_{\ge 0}$ and $l= (l_1,\cdots,l_k)$ with $0\le l_1,\cdots,l_k<p^n$.
	\end{thm}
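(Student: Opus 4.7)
The plan is to assemble the theorem as a clean specialization of the three lemmas already established, after which only a short bookkeeping step remains. In rough outline: first, I would use Proposition \ref{prop.period-formula-setup} to convert $\mu_{V,x,\cali{N}}(a+p^n\cali{O}_{V,p})$ into an evaluation sum of $f_{V,x,\cali{N}}$ against characters of $(\cali{O}_{V^\dagger,p}/p^{n+t})^\wedge$; the split of $f_{V,x,\cali{N}}$ into the pieces $f_{V,x,\cali{N},\xi}$ isolates contributions one character at a time, and Lemma \ref{lem.period-step1} already does the per-$\xi$ geometric-series sum and the limit $u\to 1^-$, producing the clean identity \eqref{equation.period-step1} indexed by $R_{V,x}(a,p^n)$.

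Next, I would sum \eqref{equation.period-step1} over the nontrivial $\xi$ and recognize the orthogonality-plus-regularization pattern used in Lemma \ref{lem.coefficients-unknown}: introduce the real parameter $u$, rewrite everything as a rational function whose singularity at $u=1$ is killed by the implicit $\xi\neq 1$ subtraction, and read off the degree-zero term using the expansion $N^k\left(\frac{1-u}{1-u^N}\right)^k = \sum b_j(u-1)^j$. This produces Lemma \ref{lem.coefficients-unknown}'s identity expressing $\mu_{V,x,\cali{N}}(a+p^n\cali{O}_{V,p})$ as $\frac{(-1)^k}{N^{k-1}}\sum_y\sum_z\bigl[b_k+b_{k-1}\binom{\tilde z}{1}+\cdots+b_0\binom{\tilde z}{k}\bigr]$. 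Then I would apply Lemma \ref{lem.sum-independence} to observe that only the $i=k$ sum depends non-trivially on $y$ (via the arithmetic sum defining $H_V$), while all lower $i$ contribute only a polynomial $P(N)$ in $N$ that is independent of $x$, $a$, $n$, and $y$.

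At this stage the formula reads
\begin{align*}
    \mu_{V,x,\cali{N}}(a+p^n\cali{O}_{V,p}) = \frac{(-1)^k}{N^{k-1}}\sum_{y\in R_{V,x}(a,p^n)}\bigl[H_V(-y/p^n) + P(N)\bigr].
\end{align*}
To finish, I would specialize to $a=x+l\cdot v$ and invoke Remark \ref{rem.R-singleton}, which collapses $R_{V,x}(a,p^n)$ to a singleton, giving the right-hand side of \eqref{equation.period-zeta} up to the yet-unknown constant $P(N)$. The main obstacle is thus identifying $P(N)$. To pin it down, I would exploit the additivity of the measure and the hypothesis that $q\equiv 1\bmod N$: partitioning $x+\cali{O}_{V,p}$ into the $q^k$ cosets of $q\cali{O}_{V,p}$ and applying the just-derived formula to both the whole and the parts yields a functional equation $H_V(x) = \sum_{0\le l<q}H_V(x+l\cdot v)+(q^k-1)P(N)$. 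A direct computation of the telescoping sum $\sum_l H_V(x+l\cdot v)$ using the parametrization of $R(\cdot,\cali{N})$ that fixes the last coordinate (as in the proof of Lemma \ref{lem.sum-independence}) evaluates the inner $l_k$ sum to an arithmetic progression of length $q/N$ times $N(N-1)/2$, giving $H_V(x) + N^{k-1}\left(\tfrac{N-1}{2}\right)^k(q^k-1)$. Comparing the two expressions forces $P(N) = -N^{k-1}\left(\tfrac{N-1}{2}\right)^k$, and substituting back yields \eqref{equation.period-zeta}.
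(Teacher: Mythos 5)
Your proposal follows the paper's proof essentially verbatim: the same three lemmas in the same order, followed by the same determination of $P(N)$ via additivity of $\mu_{V,x,\cali{N}}$ over the $q^k$ cosets of $q\cali{O}_{V,p}$ and the telescoping computation of $\sum_{0\le l<q}H_V(x+l\cdot v)$. The only blemish is a harmless sign slip in your intermediate display ($H_V(-y/p^n)$ should be $H_V(y/p^n)$, since the minus sign is already built into the congruence defining $H_V$), and the final formula you reach is the correct one.
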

	
	\begin{rem}
		When $F=\Q$ and $x=0$, this specializes to the period formula of the regularized Bernoulli measure (see, e.g., \cite[Theorem 3.1]{KW21}):
		\begin{align*}
			\mu_{1,N^{-1}}(a+p^n\Z_p) = -(-a/p^n)^\flat_N + \frac{N-1}{2}.
		\end{align*}
	\end{rem}
	
	\begin{cor}
		Let the assumptions be as in Theorem \ref{thm.first-period-formula}. Let further $\psi$ be a finite character of $\cl_+(p^\infty)$. Then
		\begin{align}
			\begin{split}
				&(1-\psi(\cali{N})\chx{N}^{1-s})L_{F,p}(s,\psi\omega_F)\\
				=& (-1)^{k-1}\lim_{n\to\infty} \sum_{1\le i\le h} \frac{\psi(\got{a}_i)}{\chx{\nm(\got{a}_i)}^s}\sum_{V} \sum_{x\in P(V)\cap \got{a}_i^{-1}} \sum_{\substack{0\le l_1,\cdots,l_k< q^n\\ \gcd(p,x+l\cdot v) = 1}} \frac{H_V(x+l\cdot v)}{N^{k-1}}
				\frac{\psi(x+l\cdot v)}{\chx{\nm(x+l\cdot v)}^s}.
			\end{split}
		\end{align}
	\end{cor}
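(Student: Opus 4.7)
The plan is to combine the integral representation \eqref{equation.integral-representation-zeta}, the measure decomposition \eqref{equation.push1}, and the period formula \eqref{equation.period-zeta} of Theorem \ref{thm.first-period-formula}. Substituting $\psi\omega_F$ for $\psi$ in \eqref{equation.integral-representation-zeta} (so that $\psi\omega_F\cdot\omega_F^{-1}=\psi$) yields
\[
-(1-\psi\omega_F(\cali{N})\chx{N}^{1-s})L_{F,p}(s,\psi\omega_F)=\int_{\cl_+(p^\infty)}\psi(\alpha)\chx{\nm\alpha}^{-s}d\mu_{F,\cali{N}}(\alpha).
\]
Unfolding through \eqref{equation.push1} and the definition of $(g_{i,V})_*$ converts this into the finite sum
\[
\sum_{1\le i\le h}\sum_V\sum_{x\in P(V)\cap\got{a}_i^{-1}}\psi(\got{a}_i)\chx{\nm\got{a}_i}^{-s}\int_{\cali{O}_{V^\dagger,p}\cap\cali{O}_p^\times}\psi(y)\chx{\nm y}^{-s}d\mu_{V,x,\cali{N}}(y).
\]

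Next, I would evaluate each inner integral as the $p$-adic limit of Riemann sums over the cylinders $x+l\cdot v+q^n\cali{O}_{V,p}$ indexed by $0\le l_i<q^n$ with $\gcd(p,x+l\cdot v)=1$. The hypothesis $q\equiv 1\pmod{N}$ forces $(x+l\cdot v)/q^n\equiv x+l\cdot v\pmod{\cali{N}}$, hence $H_V((x+l\cdot v)/q^n)=H_V(x+l\cdot v)$, and Theorem \ref{thm.first-period-formula} specializes to
\[
\mu_{V,x,\cali{N}}(x+l\cdot v+q^n\cali{O}_{V,p})=(-1)^k\left[\frac{H_V(x+l\cdot v)}{N^{k-1}}-\left(\frac{N-1}{2}\right)^k\right].
\]
Plugging this into the Riemann sum and splitting along the two bracketed terms, the $H_V$-contribution (together with the minus sign in the integral representation) produces exactly the asserted expression with overall prefactor $(-1)^{k-1}$, while the constant contribution leaves an extraneous tail proportional to $\lim_{n\to\infty}\sum_{l,\,p\nmid x+l\cdot v}\psi(x+l\cdot v)\chx{\nm(x+l\cdot v)}^{-s}$.

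The main obstacle is to show this tail vanishes in $\C_p$. My plan is to interpret $S_n:=\sum_{l,\,p\nmid y_l}\psi(y_l)\chx{\nm y_l}^{-s}$ as $q^{nk}$ times the Riemann sum of the continuous function $f:=\psi\chx{\nm}^{-s}\cdot\1_{\cali{O}_p^\times}$ on $x+\cali{O}_{V,p}$ against the normalized Haar measure $\lambda$ (assigning each $q^n$-cylinder mass $q^{-nk}$). Uniform continuity of $f$ on this compact lattice coset forces $q^{-nk}S_n\to\int f\,d\lambda$ $p$-adically, and since $|q^{nk}|_p=p^{-nmk}\to 0$ (writing $q=p^m$), we obtain $S_n\to 0$. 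Granting this cancellation, the identification of the $H_V$-piece with the desired sum completes the proof.
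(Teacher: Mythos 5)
Your overall route coincides with the paper's: the integral representation \eqref{equation.integral-representation-zeta} plus the pushforward \eqref{equation.push1} reduce everything to the inner integrals against $\mu_{V,x,\cali{N}}$, which you evaluate by Riemann sums over the cylinders $x+l\cdot v+q^n\cali{O}_{V,p}$ using Theorem \ref{thm.first-period-formula}; your observation that $q^n\equiv 1\bmod N$ forces $H_V((x+l\cdot v)/q^n)=H_V(x+l\cdot v)$ is exactly the intended use of the hypothesis on $q$, and the sign bookkeeping giving $(-1)^{k-1}$ is correct. The whole statement therefore hinges on the vanishing of the tail $S_n=\sum_{l,\,p\nmid x+l\cdot v}\psi(x+l\cdot v)\chx{\nm(x+l\cdot v)}^{-s}$ coming from the constant $\left(\frac{N-1}{2}\right)^k$.

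The step that fails as written is your justification of $S_n\to 0$. There is no bounded translation-invariant $\C_p$-valued measure on $\cali{O}_{V,p}$: the ``normalized Haar measure'' assigning mass $q^{-nk}$ to each level-$n$ cylinder satisfies $|q^{-nk}|_p\to\infty$, and for a merely uniformly continuous function the normalized sums $q^{-nk}S_n$ need \emph{not} converge $p$-adically. (This is the Volkenborn phenomenon: convergence of $p^{-n}\sum_{0\le j<p^n}f(j)$ requires strict differentiability of $f$, not continuity.) So the inference ``$q^{-nk}S_n$ converges and $q^{nk}\to 0$, hence $S_n\to 0$'' is not available from the hypothesis you invoke. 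The conclusion is nonetheless correct, and two repairs are at hand. The soft one uses uniform continuity directly: given $\epsilon$, choose $n_0$ with $|f(a)-f(b)|_p\le\epsilon$ whenever $a\equiv b\bmod p^{n_0}$, group the terms of $S_n$ by residue classes modulo $p^{n_0}$; each class contributes $(\#\text{class})\cdot f(\text{rep})$ up to an ultrametric error $\le\epsilon$, and $\#\text{class}$ is a positive power of $p$ tending to $0$, so $\limsup_n|S_n|_p\le\epsilon$. The paper's repair (Lemma \ref{lem.p-analytic-vanishing}) is sharper: it shows $f(a)\equiv f(b)\bmod p^n$ whenever $a\equiv b\bmod p^n$ with $n\ge t$ ($\psi$ factoring through $\cl_+(p^t)$), which yields the quantitative statement $S_n\equiv 0\bmod p^{n-t}$; this explicit rate is what is actually needed later in Corollary \ref{cor.sumexpr-dirichlet}, where the analogous tails must beat fixed denominators, though for the present corollary the qualitative vanishing suffices.
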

	
	\begin{proof}
		This follows from the integral representation \eqref{equation.integral-representation-zeta}, the pushforward formula \eqref{equation.push1}, and the vanishing of $\lim_{n\to\infty}\sum_{\substack{0\le l_1,\cdots,l_k< q^n\\ \gcd(p,x+l\cdot v) = 1}} \psi(x+l\cdot v)\chx{\nm(x+l\cdot v)}^{-s}$ (\textit{cf.~}the proof of Lemma \ref{lem.p-analytic-vanishing}).
	\end{proof}

	\begin{rem}
		When $F\ne \Q$, $s=0$ and $\psi$ is the trivial character, through the interpolation property \eqref{equation.interpolation} and the vanishing of $\zeta_F(0)$, we derive the following curious identity
		\begin{align*}
			\sum_{1\le i\le h}\sum_{V}\sum_{x\in P(V)\cap \got{a}_i^{-1}} \left[\frac{1}{N^{k-1}}H_V(x) - \left(\frac{N-1}{2}\right)^k\right]= 0.
		\end{align*}
		For a simple example take $F=\Q(\sqrt{5})$, for which we have $h=|\cl_+(1)|=1$. As such, take $\{\got{a}_i\}_{1\le i\le h}=\{\cali{O}\}$ and $V=\{1,\varepsilon = \frac{3+\sqrt{5}}{2}\}$. The set $P(V)\cap \cali{O}$ is then the singleton $\{1\}$. We have thus proved the following
	\end{rem}
	
	\begin{prop}
		Suppose $N>1$ is such that $X^2 - 3X + 1$ has a solution $\varepsilon$ modulo $N$. Then
		\begin{align*}
			\frac{1}{N}\sum_{1\le d<N} d(-1-d\varepsilon)^\flat_N = \frac{(N-1)^2}{4}.
		\end{align*}
	\end{prop}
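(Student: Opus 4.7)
The plan is to specialize the curious identity of the preceding Remark to $F = \Q(\sqrt{5})$ and unravel the definition of $H_V$. As spelled out in that Remark, we have $h = |\cl_+(1)| = 1$, $\got{a}_1 = \cali{O}$, a unique cone generator set $V = \{1, \varepsilon\}$ with $\varepsilon = \frac{3+\sqrt{5}}{2}$, and $P(V) \cap \cali{O} = \{1\}$, so the triple sum collapses to the single equation
$$\frac{1}{N} H_V(1) - \left(\frac{N-1}{2}\right)^2 = 0,$$
that is, $H_V(1) = \frac{N(N-1)^2}{4}$. To invoke this we need a Cassou-Nogu\`es ideal $\cali{N}$ with $\nm(\cali{N}) = N$; since $\cali{O} = \Z[\varepsilon]$ has minimal polynomial $X^2 - 3X + 1$, such $\cali{N}$ exists precisely when $X^2 - 3X + 1$ has a root modulo $N$, which is exactly the hypothesis. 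One may further select an auxiliary prime $p \nmid \cali{N}$ so that the Remark's derivation (via the interpolation formula and the vanishing of $\zeta_F(0)$) is in force, but the final identity is purely combinatorial in $N$ and $\varepsilon$.

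Next, I would unpack the definition of $H_V(1)$. Under the isomorphism $\cali{O}/\cali{N} \simeq \Z/N$, $\varepsilon$ is sent to the given root of $X^2 - 3X + 1$, and the congruence $d_1 + d_2 \varepsilon \equiv -1 \bmod \cali{N}$ becomes $d_1 \equiv -1 - d_2 \varepsilon \bmod N$. For each $d_2 \in [1, N-1]$ this pins down $d_1 = (-1 - d_2 \varepsilon)^\flat_N \in [0, N)$, so $H_V(1)$ rewrites as a single sum over $d_2$.

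The only subtlety is that $d_1$ may equal $0$, namely at the unique $d_2$ with $d_2 \varepsilon \equiv -1 \bmod N$ (here $\varepsilon$ is a unit modulo $N$, since $\varepsilon(3-\varepsilon) = 1$ from the minimal polynomial relation). But at that value $d_2 \cdot (-1 - d_2 \varepsilon)^\flat_N = 0$, so freely including it gives
$$H_V(1) = \sum_{1 \le d < N} d \cdot (-1 - d \varepsilon)^\flat_N.$$
Substituting $H_V(1) = \frac{N(N-1)^2}{4}$ and dividing by $N$ yields the claim. I do not anticipate any genuine obstacle; all the analytic input is packaged in the preceding Remark, and the proof amounts to bookkeeping the edge case $d_1 = 0$.
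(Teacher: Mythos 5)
Your proposal is correct and is essentially the paper's own argument: the proposition is derived by specializing the curious identity of the preceding Remark to $F=\Q(\sqrt{5})$ and unwinding the definition of $H_V(1)$. Your extra care with the edge case $d_1=0$ (excluded from $H_V$ but contributing zero to the sum) is a worthwhile detail the paper leaves implicit.
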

	
	It would be interesting to give this an elementary proof.
	
	\begin{rem}\label{rem:general-sumexpr}
		For the duration of this remark let $\chi$ be a Hecke character of an arbitrary conductor $\got{f}\subseteq \cali{O}$. Below we briefly sketch how to establish a sum expression of $L_{F,p}(s,\chi\omega_F)$ using the period formula \eqref{equation.period-zeta}, at the price of working with an auxiliary Cassou-Nogu\`es ideal $\cali{N}$. We will loosely follow the original strategy of \cite{CN79}. First, we need to upgrade the assumptions (A1) and (A3) in §\ref{subsection.intro.main-results} to
		\begin{enumerate}
			\item[(A1\textsuperscript{+})] The ideal $\cali{N}$ is Cassou-Nogu\`es, not equal to $\cali{O}$ and prime to $p\got{f}$.
			
			\item[(A3\textsuperscript{+})] For all $V$ in the Shintani decomposition and each $v_i\in V$, $v_i$ is prime to $\cali{N}$ and $v_i\in \got{f}_+$.
		\end{enumerate}
		As before, we also assume 
		\begin{enumerate}
			\item[(A2)] All representatives $\got{a}_i$ of $\cl_+(1)$ are integral and prime to $p\cali{N}$.
		\end{enumerate}
		Note that if (A3) is satisfied, we can achieve (A3\textsuperscript{+}) by rescaling all of $V$'s simultaneously by an element in $\got{f}_+\setminus \cali{N}$ if needed, so that the decomposition \eqref{eq:shintani-decomp} still holds.
		
		\vspace{3mm}
		
		Under these assumptions, for $\xi\in\mu_N$ and $\got{a}$ a fractional ideal, consider the complex functions
		\begin{align*}
			L_{V,x,\xi}(s) = \sum_{l\ge 0}\frac{\xi^{x+l\cdot v}}{\nm(x+l\cdot v)^s}
		\end{align*}
		and 
		\begin{align*}
			L(\got{a},s,\chi) = \sum_{\substack{0\ne \got{b}\subseteq \cali{O}\\ [\got{b}]=[\got{a}]\in \cl_+(1)}} \frac{\chi(\got{b})}{\nm(\got{b})^s},
		\end{align*}
		where if $\got{c}$ is a fractional ideal, $[\got{c}]$ denotes its class in $\cl_+(1)$. Then, if $\got{a}=\got{a}_i$ for some $i$, we have (\textit{cf.~}\cite[Th\'eor\`eme 4]{CN79}):
		\begin{align*}
			N^{1-s}\chi(\cali{N})L(\got{a}\cali{N}^{-1},s,\chi)- L(\got{a},s,\chi) = \nm(\got{a})^{-s}\sum_{V}\sum_{x\in P(V)\cap \got{a}^{-1}} \chi(\got{a}x) \sum_{\xi\ne 1,\xi^N=1} L_{V,x,\xi}(s).
		\end{align*}
		Therefore,
		\begin{align*}
			-(1-\chi(\cali{N})N^{1-s})L_F(s,\chi) = \sum_{1\le i\le h} \nm(\got{a}_i)^{-s}
			\sum_V\sum_{x\in P(V)\cap \got{a}_i^{-1}} \chi(\got{a}_ix)\sum_{\xi\ne 1,\xi^N=1} L_{V,x,\xi}(s).
		\end{align*}
		Building on this, and removing the summands of $L_{V,x,\xi}$ such that $\gcd(x+l\cdot v,p)\ne 1$ if necessary, we can run the interpolation argument in \cite[§IV]{CN79} to obtain
		\begin{align*}
			-(1-\chi\omega_F(\cali{N})\chx{N}^{1-s})L_{F,p}(s,\chi\omega_F) = \sum_{1\le i\le h}\chx{\nm(\got{a}_i)}^{-s} \sum_V\sum_{x\in P(V)\cap \got{a}_i^{-1}} \chi(\got{a}_ix)L_{p,V,x,\cali{N}}(s,\omega_F),
		\end{align*}
		where $L_{p,V,x,\cali{N}}(s,\omega_F) =\int_{\cali{O}_p^\times}\chx{\nm\alpha}^{-s}\mu_{V,x,\cali{N}}(\alpha)$. Thus the sum expression of $-(1-\chi\omega_F(\cali{N})\chx{N}^{1-s})L_{F,p}(s,\chi\omega_F)$ would follow from Theorem \ref{thm.first-period-formula}.
	\end{rem}

	\section{Explicit period formula: the Dirichlet case}
	\label{section.period2}
	
	Retain the setting of the last section. Let further $\chi$ be a finite Hecke character on $\cl_+(\cali{N})$ of nontrivial narrow modulus; that is to say, $\chi$ is nontrivial on the image of $(\cali{O}/\cali{N})^\times$ from the canonical exact sequence
	\begin{align*}
		E \to (\cali{O}/\cali{N})^\times \to \cl_+(\cali{N}) \to \cl_+(1)\to 1.
	\end{align*}
	In this case, the rational function is given by
	\begin{align*}
		f_{V,x,\chi}(t) = \sum_{0\le d_1,\cdots,d_k<N} \chi(x+d\cdot v)\frac{t^{x+d\cdot v}}{\prod_{1\le i\le k}(1-t^{Nv_i})},
	\end{align*}
	which \textit{a priori} lives in the fraction field of $\cali{A}_{V^\dagger}$. As a preliminary, we prove
	\begin{lem}
		For all $1\le i\le k$ and any $\zeta$ a $p^t$-th root of unity, the function $f_{V,x,\chi}(t)$ is regular at $t_i=\zeta$. Thus $f_{V,x,\chi}$ belongs to $\cali{A}_{V^\dagger}$.
	\end{lem}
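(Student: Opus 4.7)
The plan is to clear denominators and verify that the polynomial ``numerator''
\begin{align*}
	N(t) := \sum_{0\le d_1,\ldots,d_k<N} \chi(x + d\cdot v)\, t^{x + d\cdot v}
\end{align*}
vanishes whenever any single coordinate $t_i$ is specialized to a $p^t$-th root of unity $\zeta$. Since $\zeta^{Np^t} = (\zeta^{p^t})^N = 1$, the corresponding denominator factor $1 - t^{Nv_i} = 1 - t_i^{Np^t}$ (in the $V^\dagger$-basis, using $v_i = p^t v_i'$) has a simple zero along $\{t_i = \zeta\}$, and simple vanishing of $N(t)$ there will suffice for regularity.

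By symmetry I fix $i=1$ and write $x + d\cdot v = \sum_{j=1}^k (x_j' + p^t d_j) v_j'$ in the $V^\dagger$-basis, so $t^{x+d\cdot v} = \prod_j t_j^{x_j' + p^t d_j}$. The crucial observation is that $\zeta^{p^t}=1$ forces $t_1^{x_1' + p^t d_1}\big|_{t_1=\zeta} = \zeta^{x_1'}$, independent of $d_1$. Hence the specialization factors as
\begin{align*}
	N(t)\big|_{t_1=\zeta} = \zeta^{x_1'} \sum_{0\le d_2,\ldots,d_k<N} \Bigl(\prod_{j=2}^k t_j^{x_j' + p^t d_j}\Bigr) \sum_{d_1 = 0}^{N-1} \chi\Bigl(x + d_1 v_1 + \sum_{j=2}^k d_j v_j\Bigr).
\end{align*}
The inner $d_1$-sum is where both standing hypotheses pull their weight: because $v_1$ is coprime to $\cali{N}$ and the Cassou-Nogu\`es condition provides $\cali{O}/\cali{N}\simeq\Z/N$, multiplication by $v_1$ is a bijection on residues mod $\cali{N}$, so as $d_1$ sweeps $[0,N)$ the argument of $\chi$ runs through every residue class of $\cali{O}/\cali{N}$ exactly once. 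With $\chi$ extended by zero to non-units, the inner sum collapses to $\sum_{b\in(\cali{O}/\cali{N})^\times}\chi(b) = 0$ by character orthogonality, using the nontrivial-narrow-modulus assumption on $\chi$.

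For the concluding assertion $f_{V,x,\chi}\in\cali{A}_{V^\dagger}$, I appeal to the description $\cali{A}_{V^\dagger}\simeq\plim_n R[t_1,\ldots,t_k]/(t_1^{p^n}-1,\ldots,t_k^{p^n}-1)$: membership in this projective limit is equivalent to regularity of the rational function at every $p$-power root of unity in each coordinate. Since $\gcd(N,p)=1$, the $p$-power roots of unity among the $Np^t$-th roots of unity are exactly the $p^t$-th roots, so the regularity just established covers all singular loci that are not already automatic. The only conceptually nontrivial step is the orthogonality collapse of the inner $d_1$-sum; the rest is bookkeeping around the rescaled basis $V^\dagger = p^{-t}V$ and the factor $p^t$ that distinguishes $v_i$ from $v_i'$.
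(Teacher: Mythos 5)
Your proposal is correct and follows essentially the same route as the paper: both reduce regularity to the simple vanishing of the denominator factor $1-t_i^{Np^t}$ at $t_i=\zeta$ together with the vanishing of the numerator there, the latter coming from the collapse $\sum_{0\le d_i<N}\chi(x+d\cdot v)=0$ once $\zeta^{p^t}=1$ makes the $t_i$-exponent independent of $d_i$. Your extra justification of that collapse (the $d_1\mapsto x+d_1v_1+\cdots$ bijection onto $\cali{O}/\cali{N}$ via the Cassou-Nogu\`es condition and the coprimality of $v_1$ to $\cali{N}$) is exactly the orthogonality the paper invokes implicitly.
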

	
	\begin{proof}
		Since $1-t^{Nv_i} = 1-t_i^{Np^t}$, the vanishing order of the denominator of $f_{V,x,\chi}$ at $t_i=\zeta$ is exactly 1. Thus it suffices to show the numerator also has vanishing order $\ge 1$ at $t_i=\zeta$. Without loss of generality suppose $i=k$. We have
		\begin{align*}
			\sum_{0\le d_1,\cdots,d_k<N} \chi(x+d\cdot v) t^{x+d\cdot v} &= \sum_{0\le d_1,\cdots,d_{k-1}<N}t^{x+\sum_{1\le i\le k-1}d_iv_i}\sum_{0\le d_k<N}\chi(x+d\cdot v)t_k^{p^td_k}\\
			&= \sum_{0\le d_1,\cdots,d_{k-1}<N}t^{x+\sum_{1\le i\le k-1}d_iv_i}\sum_{0\le d_k<N}\chi(x+d\cdot v) + (t_k-\zeta)g(t)\\
			&=(t_k-\zeta)g(t),
		\end{align*}
		for some $g(t)\in\cali{A}_{V^\dagger}$.
		
		\vspace{3mm}
		
		We now show that this implies that $f_{V,x,\chi}(t)\in \cali{A}_{V^\dagger}$. Note first that
		\begin{align*}
			f_{V,x,\chi}(t^{1/N}) = \sum_{0\le d<N} \chi(x+d\cdot v) \frac{t^{\frac{x+d\cdot v}{N}}}{\prod_{1\le i\le k}(1-t_i^{p^t})}.
		\end{align*}
		Since $\gcd(N,p)=1$, we see that for all $1\le i\le k$, $t_i^{1/N}\in \cali{A}_{V^\dagger}$, so $f_{V,x,\chi}(t^{1/N})$ belongs to $\mathrm{Frac}(\cali{A}_{V^\dagger})$. Let $[N]$ be the multiplication-by-$N$ endomorphism of $\mathrm{Frac}(\cali{A}_{V^\dagger})$ induced from that of the formal torus of $\cali{A}_{V^\dagger}$. Then, since $p\nmid N$, $[N]$ is an isomorphism and preserves $\cali{A}_{V^\dagger}$. As such, noticing the image of $f_{V,x,\chi}(t^{1/N})$ under $[N]$ is $f_{V,x,\chi}(t)$, we see that the only possible poles of $f_{V,x,\chi}(t)$ are the $N$-multiples of those of $f_{V,x,\chi}(t^{1/N})$. Since the latter is contained in $\mu_{p^t}$, we conclude that $f_{V,x,\chi}$ is regular by the first part.
	\end{proof}
	
	Since $\mu_{V,x,\chi}$ is the measure corresponding to $f_{V,x,\chi}$, we may apply Proposition \ref{prop.period-formula-setup} to compute the period
	\begin{align}
		\label{equation.period-formula-dirichlet-setup}
		\mu_{V,x,\chi}(a+p^n\cali{O}_{V,p}) = \frac{1}{p^{(n+t)k}} \sum_{\varphi\in(\cali{O}_{V^\dagger,p}/p^{n+t})^\wedge} \varphi^{-1}(a)f_{V,x,\chi}|_{\varphi}.
	\end{align}
	Below we shall establish the explicit formula of $\mu_{V,x,\chi}$ in several steps. Towards this, we introduce a stratification of $(\cali{O}_{V^\dagger,p}/p^{n+t})^\wedge$. As in the proof of Lemma \ref{lem.period-step1}, we identify $\Hom(\cali{O}_{V^\dagger,p}/p^{n+t},\Z/p^{n+t})$ with $(\Z/p^{n+t})^k$. For any subset $S\subseteq \{1,2,\cdots,k\}$, we define
	\begin{align*}
		\cali{P}_S =\{w\in (\Z/p^{n+t})^k: p^tw_i=0 \text{ for all }i\in S, p^tw_i\ne 0\text{ otherwise}\}.
	\end{align*}
	Clearly $(\Z/p^{n+t})^k = \bigsqcup_S \cali{P}_S$. As a consequence, we can dismantle formula \eqref{equation.period-formula-dirichlet-setup} into
	\begin{align*}
		\mu_{V,x,\chi}(a+p^n\cali{O}_{V,p}) = \frac{1}{p^{(n+t)k}} \sum_{S\subseteq\{1,\cdots,k\}}\sum_{\varphi\in\cali{P}_S} \varphi^{-1}(a) f_{V,x,\chi}|_\varphi,
	\end{align*}
	and we denote the $S$-piece $\frac{1}{p^{(n+t)k}}\sum_{\varphi\in\cali{P}_S} \varphi^{-1}(a) f_{V,x,\chi}|_\varphi$ by $\Omega_S(a,n)$.
	
	\subsection{The initial case}
	\label{subsection.period2.empty-S}
	
	We start with $S=\varnothing$, so $\cali{P}_{S}=\cali{P}_{\varnothing}= \{w\in (\Z/p^{n+t})^k: p^tw_i\ne 0\text{ for all }1\le i\le k\}$. Recall for $n\in \Z_{>0}$ and $a\in \Z_p$, we denote by $a^\flat_{p^n}$ the unique integer in $[0,p^n)$ such that $a^\flat_{p^n}\equiv a\bmod p^n$; we extend it further to $a\in\Q_p$ by putting $a^\flat_{p^n}=0$ if $a\notin\Z_p$. Our computation in the $S=\varnothing$ case will be based on the following elementary lemma (the notation introduced only exists therein):
	\begin{lem}
		\label{lem.period-computation}
		Let $p$ be a prime and $N\in \Z_{>1}$ be prime to $p$. Let $k\in \Z_{>0}$, $v\in ((\Z/N)^\times)^k$, $z\in (\Z/N)^k$ and $y\in \Z_p^k$. Let further $\chi$ be a nontrivial Dirichlet character on $(\Z/N)^\times$, and $\zeta$ be a $p$-power root of unity with order $p^{n+t}$ for some $n,t\in \Z_{\ge 0}$. Then
		\begin{align*}
			\frac{1}{p^{(n+t)k}}\sum_{w \in \cali{P}_{\varnothing}} \sum_{0\le d_1,\cdots,d_k<N} \chi(z+d\cdot v) \frac{\zeta^{(y+dp^t)\cdot w}}{\prod_{1\le i\le k}(1-\zeta^{w_i Np^t})} = \frac{(-1)^k}{p^{nk}}\sum_{0\le d_1,\cdots,d_k<N}\chi(z+d\cdot v)\prod_{1\le i\le k}\left(-\frac{y_i+d_ip^t}{Np^t}\right)^\flat_{p^n}.
		\end{align*}
	\end{lem}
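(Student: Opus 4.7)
The plan is threefold: first, factor the sum over $w \in \cali{P}_\varnothing$ coordinate-wise into a product of independent one-dimensional sums; second, evaluate each one-dimensional piece in closed form via an elementary roots-of-unity identity; and third, invoke the nontriviality of $\chi$ to annihilate the stray constant terms that survive the factor-wise evaluation.

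\textbf{Factorization and one-dimensional evaluation.} Because $\cali{P}_\varnothing=\prod_{i=1}^k\{w\in\Z/p^{n+t}:p^n\nmid w\}$ and both $\zeta^{(y+dp^t)\cdot w}$ and the denominator $\prod_i(1-\zeta^{Np^tw_i})$ split coordinate-wise, the left-hand side becomes
\begin{align*}
\frac{1}{p^{(n+t)k}}\sum_{0\le d<N}\chi(z+d\cdot v)\prod_{i=1}^k A_i(d_i),\qquad A_i(d_i):=\sum_{\substack{w\in\Z/p^{n+t}\\ p^n\nmid w}}\frac{\zeta^{(y_i+d_ip^t)w}}{1-\zeta^{Np^tw}}.
\end{align*}
I would then write $w=r+p^nq$ with $1\le r<p^n$ and $0\le q<p^t$; the sum over $q$ collapses via $\sum_q(\zeta^{p^n})^{(y_i+d_ip^t)q}=p^t\cdot\1_{p^t\mid y_i}$, so $A_i(d_i)=0$ unless $p^t\mid y_i$. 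When $p^t\mid y_i$, setting $\omega=\zeta^{p^t}$ (a primitive $p^n$-th root) and reparametrizing $s=Nr\bmod p^n$ reduces $A_i(d_i)$ to $p^t\sum_{s=1}^{p^n-1}\omega^{bs}/(1-\omega^s)$ with $b=(y_i/p^t+d_i)N^{-1}\bmod p^n$. Applying the elementary identity
\begin{align*}
\sum_{s=1}^{p^n-1}\frac{\omega^{bs}}{1-\omega^s}=\frac{p^n-1}{2}-(-b)^\flat_{p^n},\qquad 0\le b<p^n,
\end{align*}
(proved by the telescoping $\omega^{bs}/(1-\omega^s)=1/(1-\omega^s)-\sum_{j=0}^{b-1}\omega^{js}$ together with $\sum_{s=1}^{p^n-1}1/(1-\omega^s)=(p^n-1)/2$ and character orthogonality), one obtains $A_i(d_i)/p^{n+t}=B_i(d_i)+E_i$ with
\begin{align*}
B_i(d_i)=-\frac{1}{p^n}\left(-\frac{y_i+d_ip^t}{Np^t}\right)^\flat_{p^n},\qquad E_i=\frac{p^n-1}{2p^n}\cdot\1_{p^t\mid y_i},
\end{align*}
where $E_i$ is a constant in $d_i$. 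When $p^t\nmid y_i$, both $A_i$ and $B_i$ vanish, so this decomposition holds uniformly in $i$.

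\textbf{Cancellation by $\chi$ and conclusion.} Expanding $\prod_i(B_i(d_i)+E_i)=\sum_{S\subseteq\{1,\dots,k\}}\prod_{i\in S}B_i(d_i)\prod_{i\notin S}E_i$, only the term with $S=\{1,\dots,k\}$ contributes once we sum against $\chi(z+d\cdot v)$: for any proper $S$, choose $i_0\notin S$; then the summand is independent of $d_{i_0}$, and since $v_{i_0}\in(\Z/N)^\times$ the map $d_{i_0}\mapsto z+d\cdot v$ is a bijection of $\Z/N$, whence $\sum_{d_{i_0}=0}^{N-1}\chi(z+d\cdot v)=\sum_{e\in\Z/N}\chi(e)=0$ by nontriviality of $\chi$. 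The surviving $S=\{1,\dots,k\}$ contribution equals $\prod_i B_i(d_i)=\frac{(-1)^k}{p^{nk}}\prod_i(-(y_i+d_ip^t)/(Np^t))^\flat_{p^n}$, giving the claimed right-hand side. The genuine analytic input is the one-dimensional sum $\sum\omega^{bs}/(1-\omega^s)$; once that is in hand, the principal delicacy is organizing the factor-wise remainders $E_i$ so that $\chi$ annihilates every cross-term of the expansion, which is precisely what necessitates the nontriviality hypothesis on $\chi$.
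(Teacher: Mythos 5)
Your proof is correct, but it takes a genuinely different route from the paper's. The paper regularizes with an auxiliary real parameter $u\in(0,1)$, expands each $\tfrac{1}{1-u\zeta^{w_iNp^t}}$ as a geometric series, uses the nontriviality of $\chi$ to extend the $w$-sum from $\cali{P}_\varnothing$ to all of $(\Z/p^{n+t})^k$, applies additive-character orthogonality to isolate the congruence $y+dp^t+Np^tl\equiv 0\bmod p^{n+t}$, and finally lets $u\to 1^-$ (invoking $\chi$-orthogonality a second time, implicitly, to discard the constant terms of $u^{(\cdot)^\flat}$ in the Taylor expansion at $u=1$). You instead stay entirely finite: you factor the $w$-sum over $\cali{P}_\varnothing$ coordinate-wise, collapse the $q$-sum to produce the indicator $\1_{p^t\mid y_i}$ (which correctly matches the convention that $a^\flat_{p^n}=0$ for non-$p$-integral $a$), evaluate each one-dimensional piece by the classical identity $\sum_{s=1}^{p^n-1}\omega^{bs}/(1-\omega^s)=\tfrac{p^n-1}{2}-(-b)^\flat_{p^n}$, and then use $\chi$-orthogonality exactly once to annihilate every cross-term $\prod_{i\in S}B_i\prod_{i\notin S}E_i$ with $S$ proper. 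Your version is limit-free and makes the role of the nontriviality hypothesis maximally transparent (it kills precisely the $\tfrac{p^n-1}{2}$-type constants, the analogues of the $(\tfrac{N-1}{2})^k$ term in the zeta case); the cost is that you need the closed-form one-dimensional sum as external input, whereas the paper's Abel-summation argument runs parallel to its Lemma 3.2 and never requires it. One cosmetic point: in your parenthetical derivation of the one-dimensional identity, the orthogonality being used is that of the additive characters $s\mapsto\omega^{js}$, not of $\chi$; it would be worth phrasing it that way to avoid confusion with the multiplicative orthogonality used in the final step.
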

	
	\begin{proof}
		It suffices to prove the equality in $\C$. For this let $u\in (0,1)$ be an auxiliary real parameter. Then
		\begin{align*}
			&\frac{1}{p^{(n+t)k}}\sum_{w \in \cali{P}_{\varnothing}} \sum_{0\le d_1,\cdots,d_k<N} \chi(z+d\cdot v) \frac{\zeta^{(y+dp^t)\cdot w}}{\prod_{1\le i\le k}(1-u\zeta^{w_i Np^t})}\\
			=&\frac{1}{p^{(n+t)k}}\sum_{0\le d<N}\chi(z+d\cdot v) \sum_{w \in \cali{P}_{\varnothing}} \sum_{l_1,\cdots,l_k\ge 0} u^{l_1+\cdots+l_k} \zeta^{(y+dp^t+Np^tl)\cdot w}\\
			=&\frac{1}{p^{(n+t)k}}\sum_{0\le d<N} \chi(z+d\cdot v) \sum_{w \in (\Z/p^{n+t})^k} \sum_{l\ge 0} u^{l_1+\cdots+l_k} \zeta^{(y+dp^t+Np^tl)\cdot w}.
		\end{align*}
		Here the last equality follows from the orthogonality of $\chi$: If $w\in (\Z/p^{n+t})^k\setminus \cali{P}_{\varnothing}$ then without loss of generality we may assume $p^tw_k=0$. As such, for all $0\le d_1,\cdots,d_{k-1}<N$,
		\begin{align*}
			\sum_{0\le d_k<N} \chi(z+d\cdot v) \zeta^{(y+dp^t+Np^tl)\cdot w} = \zeta^{(y+Np^tl)\cdot w}\zeta^{p^t\sum_{1\le i\le k-1}d_iw_i} \sum_{0\le d_k<N} \chi(z+d\cdot v) = 0.
		\end{align*}
		Continuing with our computation, we have
		\begin{align*}
			&\frac{1}{p^{(n+t)k}}\sum_{0\le d<N} \chi(z+d\cdot v)\sum_{l\ge 0}u^{l_1+\cdots+l_k}\sum_{w \in (\Z/p^{n+t})^k} \zeta^{(y+dp^t+Np^tl)\cdot w}\\
			=&\sum_{0\le d<N} \chi(z+d\cdot v)
			\sum_{\substack{l\ge 0\\ y+dp^t+Np^tl\equiv 0\bmod p^{n+t}}} u^{l_1+\cdots+l_k}\\
			=&\sum_{0\le d<N}\chi(z+d\cdot v) \prod_{1\le i\le k}\frac{u^{(-\frac{y_i+d_ip^t}{Np^t})^\flat_{p^n}}}{1-u^{p^n}}\\
			=&\sum_{0\le d<N}\chi(z+d\cdot v) \prod_{1\le i\le k}\frac{(-\frac{y_i+d_ip^t}{Np^t})^\flat_{p^n}(u-1)+O(u-1)^2}{(1-u)p^n+O(u-1)^2},
		\end{align*}
		where in the last equality we used the regularity of the rational function at $u=1$. The result then follows by letting $u\to 1^-$.
	\end{proof}
	
	We may now compute the $S=\varnothing$-piece using the lemma. Below denote by $\zeta$ a primitive $p^{n+t}$-th root of unity, and for $a\in \cali{O}_{V^\dagger,p}$, write $a=\sum_{1\le i\le k}a_iv_i$ for some $a_i\in p^{-t}\Z_p$. Then
	\begin{align*}
		\Omega_\varnothing(a,n) &=\frac{1}{p^{(n+t)k}}\sum_{w \in \cali{P}_{\varnothing}}\zeta^{-\pair{a}{w}} \sum_{0\le d_1,\cdots,d_k<N}\chi(x+d\cdot v)\frac{\zeta^{\pair{x+d\cdot v}{w}}}{\prod_{1\le i\le k}(1-\zeta^{Np^t w_i})}\\
		&=\frac{(-1)^k}{p^{nk}}\sum_{0\le d_1,\cdots,d_k<N}\chi(x+d\cdot v)\prod_{1\le i\le k}\left(\frac{a_i-x_i-d_i}{N}\right)^\flat_{p^n}.
	\end{align*}

	\subsection{General cases via reduction}
	\label{subsection.general-S}
	
	Now we consider $S\subseteq \{1,2,\cdots,k\}$ that is nonempty. Write $V^{\dagger,S} = \{v'_j\}_{j\notin S}$, and $\cali{O}^S_{V^{\dagger},p} = \oplus_{j\notin S} \Z_pv'_j$. Also, for any $\varphi\in (\cali{O}_{V^\dagger,p})^\wedge\simeq \ilim_n (\Z/p^n)^k$, write $\varphi^S = \varphi|_{\cali{O}^S_{V^{\dagger},p}}$, which is also defined on $\cali{O}_{V^\dagger,p}$ via the canonical projection $\cali{O}_{V^\dagger,p}\to \cali{O}^S_{V^\dagger,p}$. For any $T\subseteq \{1,2,\cdots,k\}\setminus S$, denote by $\cali{P}_{T}^S$ the substratum of $\prod_{j\notin S}(\Z/p^{n+t})$:
	\begin{align*}
		\textstyle
		\cali{P}_{T}^S = \{(w_j)_{j\not\in S}\in \prod_{j\notin S}(\Z/p^{n+t}):p^tw_i=0\text{ for all }i\in T,p^tw_i\ne 0\text{ otherwise}\}.
	\end{align*}
	Thus if $\varphi\in \cali{P}_S$ then $\varphi^S\in \cali{P}^S_{\varnothing}$. For any $g\in\cali{A}_{V^{\dagger,S}}=R[[t_j-1]]_{j\notin S}\subset \cali{A}_{V^\dagger}$ and for any $\varphi\in(\cali{O}_{V^\dagger,p})^\wedge$, we have compatible evaluations $g|_\varphi = g|_{\varphi^S}$. Finally for notational convenience we set $\cali{R} = \{x+d\cdot v: 0\le d_1,\cdots,d_k<N\}$ and $\cali{R}^S = \{y^S = \sum_{j\notin S}y_jv_j : y = \sum_{0\le i\le k} y_iv_i \in \cali{R}\}$. So $\cali{R} = \{y^S + \sum_{i\in S} (x_i+d_i)v_i: y^S\in \cali{R}^S, 0\le d_i<N \text{ for all } i\in S\}$.
	
	\vspace{3mm}
	
	Now let $\varphi\in \cali{P}_S$. Note that
	\begin{align*}
		f_{V,x,\chi}|_\varphi&=
		\left.\left(\sum_{y\in\cali{R}} \chi(y) \frac{t^y}{\prod_{1\le i\le k}(1-t_i^{Np^t})}\right)\right|_\varphi\\
		&= \sum_{y^S\in \cali{R}^S} \left.\frac{t^{y^S}}{\prod_{j\notin S}(1-t_j^{Np^t})}\right|_{\varphi^S} \left.\left(\sum_{\substack{0\le d_i<N\\ i\in S}}\chi(y) \prod_{i\in S}\frac{t_i^{p^t(x_i+d_i)}}{1-t_i^{Np^t}}\right)\right|_{t_i=\zeta_i,i\in S},
	\end{align*}
	where $y = y^S + \sum_{i\in S} (x_i+d_i)v_i$ and $\zeta_i = \varphi(v'_i)$ is some $p^t$-th root of unity for $i\in S$. Using the regularity at $t_i=\zeta_i$ for all $i\in S$, we have
	\begin{align*}
		\frac{\sum_{\substack{0\le d_i<N\\ i\in S}}\chi(y) \prod_{i\in S}t_i^{p^t(x_i+d_i)}}{\prod_{i\in S}(1-t_i^{Np^t})} 
		= \frac{\left[\sum_{\substack{0\le d_i<N\\ i\in S}} \chi(y) \prod_{i\in S}p^t(x_i+d_i)\zeta_i^{p^tx_i-1}\right]\prod_{i\in S}(t_i-\zeta_i) + G_1(t)\prod_{i\in S}(t_i-\zeta_i)}{\prod_{i\in S}Np^t\zeta_i^{-1}\prod_{i\in S}(\zeta_i-t_i) + G_2(t)\prod_{i\in S}(t_i-\zeta_i)},
	\end{align*}
	where $G_i(t)|_{t_i=\zeta_i,i\in S} =0$ for $i=1,2$. This shows
	\begin{align*}
		f_{V,x,\chi}|_\varphi
		= \frac{(-1)^{|S|}}{N^{|S|}}
		\sum_{\substack{0\le d_i<N\\ i\in S}}\prod_{i\in S} (x_i+d_i)\zeta_i^{p^tx_i} \sum_{y^S\in \cali{R}^S} \chi(y)\left.\frac{t^{y^S}}{\prod_{j\notin S}(1-t_j^{Np^t})}\right|_{\varphi^S}.
	\end{align*}
	In turn, taking a primitive $p^{n+t}$-th root of unity $\zeta$, we have
	\begin{align*}
		&\Omega_S(a,n)\\
		=&\frac{1}{p^{(n+t)k}}\sum_{\varphi\in \cali{P}_S} \varphi^{-1}(a)f_{V,x,\chi}|_\varphi \\
		=&\frac{1}{p^{(n+t)k}}\sum_{\substack{w_i\in p^n\Z/p^{n+t}\\ i\in S}} \sum_{\varphi^S\in \cali{P}^S_{\varnothing}}
		\zeta^{-p^t\sum_{i\in S}a_iw_i}(\varphi^S)^{-1}(a)
		\frac{(-1)^{|S|}}{N^{|S|}}
		\sum_{\substack{0\le d_i<N\\ i\in S}}\prod_{i\in S} (x_i+d_i)\zeta^{p^tx_iw_i} \sum_{y^S\in \cali{R}^S} \chi(y)\left.\frac{t^{y^S}}{\prod_{j\notin S}(1-t_j^{Np^t})}\right|_{\varphi^S}
		\\
		=&\frac{(-1)^{|S|}}{N^{|S|}p^{|S|n}}
		\1_{x_i-a_i\in\Z_p,i\in S}
		\sum_{\substack{0\le d_i<N\\ i\in S}} \prod_{i\in S}(x_i+d_i)
		\frac{1}{p^{(k-|S|)(n+t)}}
		\sum_{\varphi^S\in \cali{P}^S_{\varnothing}}
		(\varphi^S)^{-1}(a)
		\sum_{y^S\in \cali{R}^S}
		\chi(y)\left.\frac{t^{y^S}}{\prod_{j\notin S}(1-t_j^{Np^t})}\right|_{\varphi^S}\\
		=&\frac{(-1)^{|S|}}{N^{|S|}p^{|S|n}}
		\1_{x_i-a_i\in\Z_p,i\in S}
		\sum_{\substack{0\le d_i<N\\ i\in S}}\prod_{i\in S}(x_i+d_i) \frac{(-1)^{k-|S|}}{p^{(k-|S|)n}}
		\sum_{\substack{0\le d_j<N\\ j\notin S}} \chi(x+d\cdot v)\prod_{j\notin S}\left(\frac{a_j-x_j-d_j}{N}\right)^\flat_{p^n}\\
		=&\frac{(-1)^k}{p^{nk}}\1_{x_i-a_i\in\Z_p,i\in S}\sum_{0\le d_1,\cdots,d_k<N} \chi(x+d\cdot v)\prod_{i\in S}\frac{x_i+d_i}{N}\prod_{j\notin S}\left(\frac{a_j-x_j-d_j}{N}\right)^\flat_{p^n}.
	\end{align*}
	
	\begin{rem}
		Recall that for $y\in\Q_p\setminus \Z_p$, we declared $y^\flat_{p^n}$ to be $0$. Hence from the explicit formulas of $\Omega_S(a,n)$ for various $S$, it is clear that for $\mu_{V,x,\chi}(a+p^n\cali{O}_{V,p})\ne 0$, $a\in x+\cali{O}_{V,p}$.
	\end{rem}
	
	\subsection{Ultimate formula}
	
	We can now prove
	\begin{thm}
		\label{thm.second-period-formula}
		Let the notation and assumptions be as in Theorem \ref{thm.first-period-formula}. Let further $\chi: \cl_+(\cali{N})\to \bar{\Q}^\times$ be a character of nontrivial narrow modulus. Then the attached $p$-adic measure $\mu_{V,x,\chi}$ on $\cali{O}_{V^\dagger,p}$ is valued in $\Z[1/N,\im(\chi)]$, and we have
		\begin{align}
			\label{equation.second-period-formula}
			\mu_{V,x,\chi}(x+l\cdot v+p^n\cali{O}_{V,p}) = \frac{(-1)^k}{N^k}\sum_{0\le d_1,\cdots,d_k<N} \chi(x+(l+p^nd)\cdot v)\prod_{1\le i\le k} d_i
		\end{align}
		for all $n\in \Z_{\ge 0}$ and $l=(l_1,\cdots,l_k)$ with $0\le l_1,\cdots,l_k<p^n$. Moreover, when $p^n\equiv 1\bmod \cali{N}$, we have
		\begin{align}
			\label{equation.second-period-formula-q}
			\mu_{V,x,\chi}(x+l\cdot v+p^n\cali{O}_{V,p}) =(-1)^k\sum_{S\subseteq\{1,\cdots,k\}} N^{-|S|}
			\sum_{\substack{0\le d_i<N\\ i\in S}}\sum_{\substack{0\le d_j<l_j\\ j\notin S}} \chi(x+d\cdot v)\prod_{i\in S}d_i.
		\end{align}
	\end{thm}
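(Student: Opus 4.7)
\emph{Plan.}  Combining the computations of §\ref{subsection.period2.empty-S} and §\ref{subsection.general-S} gives the decomposition $\mu_{V,x,\chi}(x+l\cdot v+p^n\cali{O}_{V,p}) = \sum_{S\subseteq\{1,\ldots,k\}}\Omega_S(x+l\cdot v,n)$; the indicators $\1_{x_i-a_i\in\Z_p}$ are identically $1$ since $a_i-x_i=l_i\in\Z$.  Applying the elementary identity $\sum_S\prod_{i\in S}A_i\prod_{j\notin S}B_j=\prod_i(A_i+B_i)$ repackages this as
\begin{align*}
	\mu_{V,x,\chi}(x+l\cdot v+p^n\cali{O}_{V,p}) = \frac{(-1)^k}{p^{nk}}\sum_{0\le d<N}\chi(x+d\cdot v)\prod_{i=1}^k\left[\frac{x_i+d_i}{N}+\left(\frac{l_i-d_i}{N}\right)^\flat_{p^n}\right].
\end{align*}

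\emph{Simplification yielding \eqref{equation.second-period-formula}.}  Setting $r_i:=((l_i-d_i)/N)^\flat_{p^n}$, the congruence $Nr_i\equiv l_i-d_i\pmod{p^n}$ (using that $N$ is a $p$-adic unit, so $Np^n\Z_p\cap\Z=p^n\Z$) produces an integer $M_i:=(Nr_i-l_i+d_i)/p^n$ with the clean factorization
\begin{align*}
	\frac{x_i+d_i}{N}+r_i = \frac{x_i+l_i}{N}+\frac{p^n}{N}M_i.
\end{align*}
The range estimates $r_i\in[0,p^n)$, $l_i\in[0,p^n)$, $d_i\in[0,N)$ confine $M_i$ to the interval $(-1,N+N/p^n)$, and the boundary value $N$ is ruled out because it would force $N\mid l_i-d_i$ with $l_i<d_i$, impossible.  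Thus $M_i\in[0,N)\cap\Z$ and reduction of $p^nM_i\equiv d_i-l_i\pmod{N}$ identifies $M_i=((d_i-l_i)/p^n)^\flat_N$.  Expanding $\prod_i[\alpha_i+(p^n/N)M_i]$ with the $d$-free constants $\alpha_i:=(x_i+l_i)/N$, one obtains a sum over subsets $T\subseteq\{1,\ldots,k\}$; for $T\subsetneq\{1,\ldots,k\}$, pick $j\notin T$ and note that $\sum_{d_j\in[0,N)}\chi(x+d\cdot v)=0$ (nontriviality of $\chi$ on $\cali{O}/\cali{N}\simeq\Z/N$, with $v_j$ a unit), so only the $T=\{1,\ldots,k\}$ term survives.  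This yields
\begin{align*}
	\mu_{V,x,\chi}(x+l\cdot v+p^n\cali{O}_{V,p}) = \frac{(-1)^k}{N^k}\sum_d\chi(x+d\cdot v)\prod_i\left(\frac{d_i-l_i}{p^n}\right)^\flat_N,
\end{align*}
which is \eqref{equation.second-period-formula} after the bijective reparametrization $d_i\mapsto d'_i:=((d_i-l_i)/p^n)^\flat_N$ on $[0,N)^k$, whose inverse satisfies $d_i\equiv l_i+p^nd'_i\pmod{N}$ (so $\chi(x+d\cdot v)=\chi(x+(l+p^nd')\cdot v)$ by $\cali{N}$-periodicity).

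\emph{The $p^n\equiv 1\pmod{\cali{N}}$ refinement.}  Under this hypothesis $M_i=(d_i-l_i^\flat_N)^\flat_N=(d_i-l_i^\flat_N)+N\cdot\1_{d_i<l_i^\flat_N}$.  Expanding the product coordinatewise over subsets $T$ (and setting $S:=T^c$), the indicators restrict $d_i$ to $[0,l_i^\flat_N)$ for $i\notin S$, and the product collapses to $\prod_{j\in S}(d_j-l_j^\flat_N)$.  Distributing this latter product and invoking character orthogonality once more—this time killing the cross-terms in which some coordinate $j\in S$ is summed over the full range $[0,N)$ without appearing in the remaining product—replaces $\prod_{j\in S}(d_j-l_j^\flat_N)$ by $\prod_{j\in S}d_j$.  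A final application of orthogonality along $i\notin S$ lets us swap the upper bound $l_i^\flat_N$ for $l_i$ (since $\sum_{d_i=0}^{N-1}\chi(x+d\cdot v)=0$, the sum up to $l_i-1$ agrees with the sum up to $l_i^\flat_N-1$), producing \eqref{equation.second-period-formula-q}.

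\emph{Main obstacle.}  The essential step is the $p$-adic bookkeeping around the identity $M_i=((d_i-l_i)/p^n)^\flat_N$—in particular, confirming the precise range $[0,N)$ via the hypotheses $l_i<p^n$ and $d_i<N$.  Once this algebraic fact is in hand, the derivation of both formulas becomes a pleasant cascade of character orthogonality on coordinates summed over full residue systems mod $N$.
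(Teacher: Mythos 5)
Your proposal is correct and follows essentially the same route as the paper: both start from the decomposition $\mu_{V,x,\chi}=\sum_S\Omega_S$, reduce to the single product $\prod_i\bigl(\tfrac{d_i-l_i}{p^n}\bigr)^\flat_N$ (your $M_i$ is exactly the paper's $h(l_i-d_i)$, which the paper identifies by citing \cite[Proposition 3.2]{Zh22} where you verify the range directly), and then use character orthogonality plus the reparametrization $d\mapsto d'$ for \eqref{equation.second-period-formula} and the indicator expansion over subsets for \eqref{equation.second-period-formula-q}. The only differences are cosmetic reorderings of when orthogonality is invoked (the paper strips constants first and then applies the inclusion--exclusion identity, you combine the product first), and your handling of $l_i^\flat_N$ at the end replaces the paper's periodicity reduction to $0\le l_i<N$.
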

	
	\begin{proof}
		We have seen that $\mu_{V,x,\chi}(a+p^n\cali{O}_{V,p}) = \sum_{S\subseteq\{1,\cdots,k\}} \Omega_S(a,n)$. In the following let $h(l_j-d_j)$ be the unique integer such that $(\frac{l_j-d_j}{N})^\flat_{p^n} = \frac{l_j-d_j+p^nh(l_j-d_j)}{N}$. By the computations done in §\ref{subsection.period2.empty-S} and §\ref{subsection.general-S}, we find
		\begin{align*}
			\mu_{V,x,\chi}(x+l\cdot v+p^n\cali{O}_{V,p}) &= \frac{(-1)^k}{p^{nk}} \sum_{S\subseteq\{1,\cdots,k\}}
			\sum_{0\le d_1,\cdots,d_k<N} \chi(x+d\cdot v)
			\prod_{i\in S} \frac{x_i+d_i}{N}\prod_{j\notin S}\frac{l_j-d_j+p^nh(l_j-d_j)}{N}\\
			&= \frac{(-1)^k}{p^{nk}N^k} \sum_{S\subseteq\{1,\cdots,k\}}
			\sum_{0\le d_1,\cdots,d_k<N} \chi(x+d\cdot v)
			\prod_{i\in S} d_i\prod_{j\notin S}[-d_j+p^nh(l_j-d_j)]\\
			&= \frac{(-1)^k}{p^{nk}N^k}
			\sum_{0\le d_1,\cdots,d_k<N} \chi(x+d\cdot v)
			\sum_{S\subseteq\{1,\cdots,k\}} \prod_{i\in S} d_i\prod_{j\notin S}[-d_j+p^nh(l_j-d_j)].
		\end{align*}
		where in the second equality we used the nontriviality of $\chi$ on $(\cali{O}/\cali{N})^\times$. The last term can be vastly simplified by the identity
		\begin{align*}
			\sum_{S\subseteq\{1,\cdots,k\}} \prod_{i\in S} d_i\prod_{j\notin S}[-d_j+p^nh(l_j-d_j)]&=\sum_{S\subseteq\{1,\cdots,k\}}
			\sum_{S'\supseteq S}(-1)^{|S'|-|S|}p^{n(k-|S'|)}
			\prod_{i\in S'}d_i\prod_{j\notin S'} h(l_j-d_j)\\
			&=\sum_{S'\subseteq\{1,\cdots,k\}}(-1)^{|S'|}p^{n(k-|S'|)} \prod_{i\in S'}d_i\prod_{j\notin S'}h(l_j-d_j) \sum_{S\subseteq S'}(-1)^{|S|}.
		\end{align*}
		Since $\sum_{S\subseteq S'}(-1)^{|S|}$ vanishes unless $S'=\varnothing$, we have
		\begin{align*}
			\sum_{S\subseteq\{1,\cdots,k\}} \prod_{i\in S}d_i\prod_{j\notin S}[-d_j+p^nh(l_j-d_j)] = p^{nk}\prod_{1\le i\le k}h(l_i-d_i);
		\end{align*}
		thereby
		\begin{align}
			\label{equation.second-period-formula-intermediate}
			\mu_{V,x,\chi}(x+l\cdot v+p^n\cali{O}_{V,p}) = \frac{(-1)^k}{N^k}\sum_{0\le d_1,\cdots,d_k<N} \chi(x+d\cdot v)\prod_{1\le i\le k}h(l_i-d_i).
		\end{align}
		
		Now, as $0\le l_i<p^n$, $0\le d_i<N$, it can be shown that (see, e.g., \cite[Proposition 3.2.]{Zh22})
		\begin{align*}
			h(l_i-d_i) = \left(\frac{d_i-l_i}{p^n}\right)^\flat_N.
		\end{align*}
		Consequently,
		\begin{align*}
			\mu_{V,x,\chi}(x+l\cdot v+p^n\cali{O}_{V,p}) &= \frac{(-1)^k}{N^k}\sum_{0\le d_1,\cdots,d_k<N} \chi(x+(l+d)\cdot v)\prod_{1\le i\le k} \left(\frac{d_i}{p^n}\right)^\flat_N\\
			&= \frac{(-1)^k}{N^k}\sum_{0\le d_1,\cdots,d_k<N} \chi(x+(l+p^nd)\cdot v)\prod_{1\le i\le k} d_i.
		\end{align*}
		To prove the second formula \eqref{equation.second-period-formula-q}, it suffices to prove it when $0\le l_i<N$ for all $1\le i\le k$, since both sides are periodic in each of $l_i$ with period $N$, as long as $0\le l_i<p^n$. Under this restriction, $h(l_i-d_i) = (\frac{d_i-l_i}{p^n})^\flat_{N} = (d_i-l_i)^\flat_N = d_i-l_i + N\1_{l_i>d_i}$. By \eqref{equation.second-period-formula-intermediate}, we have
		\begin{align*}
			\mu_{V,x,\chi}(x+l\cdot v+p^n\cali{O}_{V,p})&=\frac{(-1)^k}{N^k}
			\sum_{0\le d_1,\cdots,d_k<N}\chi(x+d\cdot v)
			\prod_{1\le i\le k}(d_i-l_i+N\1_{l_i>d_i})\\
			&=\frac{(-1)^k}{N^k}
			\sum_{0\le d_1,\cdots,d_k<N}\chi(x+d\cdot v)
			\prod_{1\le i\le k}(d_i+N\1_{l_i>d_i})\\
			&=(-1)^k\sum_{S\subseteq\{1,\cdots,k\}} N^{-|S|}
			\sum_{\substack{0\le d_i<N\\ i\in S}}\sum_{\substack{0\le d_j<l_j\\ j\notin S}} \chi(x+d\cdot v)\prod_{i\in S}d_i.
		\end{align*}
	\end{proof}
	
	\subsection{The sum expression}
	
	We now prove the second part of Theorem \ref{thm.main}.
	\begin{cor}
		\label{cor.sumexpr-dirichlet}
		Let the notation and assumptions be as in Theorem \ref{thm.second-period-formula}. Let further $\psi$ be a finite character of $\cl_+(p^\infty)$. Then
		\begin{align}
			\label{equation.sumexpr-dirichlet}
			L_{F,p}(s,\chi\psi\omega_F)
			= (-1)^k\lim_{n\to\infty} \sum_{1\le i\le h} \frac{\chi\psi(\got{a}_i)}{\chx{\nm(\got{a}_i)}^s}\sum_{V} \sum_{x\in P(V)\cap \got{a}_i^{-1}} \sum_{\substack{0\le l_1,\cdots,l_k< q^n\\ \gcd(p,x+l\cdot v) = 1}} 
			\sum_{0\le d<l^\flat_N} \chi(x+d\cdot v)
			\frac{\psi(x+l\cdot v)}{\chx{\nm(x+l\cdot v)}^s}.
		\end{align}
	\end{cor}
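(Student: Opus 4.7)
The plan is to combine the integral representation \eqref{equation.integral-representation-dirichlet}, the pushforward formula \eqref{equation.push2}, and the period formula \eqref{equation.second-period-formula-q} of Theorem~\ref{thm.second-period-formula}. Substituting $\psi\omega_F$ for $\psi$ in \eqref{equation.integral-representation-dirichlet} and unraveling \eqref{equation.push2} rewrites
\[
L_{F,p}(s,\chi\psi\omega_F) = \sum_{i,V,x} \chi\psi(\got{a}_i)\chx{\nm(\got{a}_i)}^{-s} \int_{\cali{O}_{V^\dagger,p}\cap\cali{O}_p^\times} \psi(y)\chx{\nm y}^{-s}\mu_{V,x,\chi}(y).
\]
Each integral is then approximated by Riemann sums over the cosets $x+l\cdot v+q^n\cali{O}_{V,p}$, $0\le l_i<q^n$, restricted to those cosets lying in $\cali{O}_p^\times$. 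Since $q^n\equiv 1\pmod{\cali{N}}$, the period formula \eqref{equation.second-period-formula-q} (with $p^n$ replaced by $q^n$) applies and decomposes each period as a signed sum indexed by $S\subseteq\{1,\dots,k\}$.

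The $S=\varnothing$ piece contributes $(-1)^k\sum_{0\le d<l}\chi(x+d\cdot v)$. Because each $v_i$ is prime to $\cali{N}$ and $\chi$ is nontrivial on $(\cali{O}/\cali{N})^\times$, the character $\chi(x+d\cdot v)$ is $N$-periodic in each $d_i$ and has vanishing mean over a full period in any single variable. Iterating this observation collapses the range $0\le d<l$ to $0\le d<l^\flat_N$, giving precisely the summand on the right-hand side of \eqref{equation.sumexpr-dirichlet}.

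The principal obstacle is showing that the pieces with $S\ne\varnothing$ contribute zero in the limit $n\to\infty$. For each such $S$ and each fixed tuple $(d_i)_{i\in S}\in[0,N)^{|S|}$, the contribution is proportional to
\[
\sum_{l,\gcd}\psi(x+l\cdot v)\chx{\nm(x+l\cdot v)}^{-s}\,\sum_{0\le d_j<l_j,\,j\notin S}\chi(x+d\cdot v).
\]
Since the inner $d$-sum depends on $l$ only through the residues $(l_j\bmod N)_{j\notin S}$, character orthogonality on $(\Z/N)^{k-|S|}$ expresses the whole quantity as a finite linear combination of twisted Riemann sums $\sum_{l,\gcd}\eta(l)\psi(x+l\cdot v)\chx{\nm(x+l\cdot v)}^{-s}$, where $\eta$ runs over characters of $(\Z/N)^{k-|S|}$. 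Each such twisted sum is then handled by absorbing $\eta$ into the analytic weight---viewing $\eta\psi$ as a character on a suitable enlargement of $\cl_+(\cali{N}p^\infty)$---and invoking the vanishing argument of Lemma~\ref{lem.p-analytic-vanishing}.
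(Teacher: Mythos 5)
Your skeleton is the paper's: integral representation \eqref{equation.integral-representation-dirichlet}, pushforward \eqref{equation.push2}, Riemann sums over the cosets $x+l\cdot v+q^n\cali{O}_{V,p}$ evaluated by the period formula \eqref{equation.second-period-formula-q}, and the orthogonality collapse of the $S=\varnothing$ piece from $\sum_{0\le d<l}$ to $\sum_{0\le d<l^\flat_N}$. The gap is in how you kill the pieces with $S\ne\varnothing$. After Fourier-expanding the inner $d$-sum into characters $\eta$ of $(\Z/N)^{k-|S|}$, you propose to absorb $\eta$ into the analytic weight and invoke Lemma \ref{lem.p-analytic-vanishing}. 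That lemma does not extend to such twists: it requires the character to factor through $\cl_+(p^t)$, and its proof rests on the congruence $f(a+(m+ip^{n-1})v_k)\equiv f(a+mv_k)\bmod p^{n-1}$, i.e.\ on $p$-adic local constancy of the weight. A character of conductor $N$ prime to $p$ is nowhere locally constant for the $p$-adic topology, and Riemann sums twisted by such characters do not tend to $0$ (already for $F=\Q$, $\psi=1$, $s=0$, the sum $\sum_{0\le m<q^n,\,p\nmid m}\eta(m)$ converges to a bounded, generically nonzero quantity). Worse, if the mechanism you describe were valid it would apply verbatim to the $S=\varnothing$ term, which is itself a conductor-$N$ twisted Riemann sum (with $\chi$ in the role of the twist) and converges to the nonzero $L$-value; so this step, as written, would fail.

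The repair is available inside your own decomposition and is what the paper does. Since $S\ne\varnothing$, fix $i_0\in S$ (the paper takes $k\in S$ without loss of generality). Neither the weight $\prod_{i\in S}d_i$, nor the constraints $0\le d_j<l_j$ for $j\notin S$, nor your twist $\eta$ involves the coordinate $l_{i_0}$. Interchanging sums so that $l_{i_0}$ is innermost leaves the \emph{untwisted} sum $\sum_{0\le l_{i_0}<q^n,\ \gcd(p,x+l\cdot v)=1}\psi(x+l\cdot v)\chx{\nm(x+l\cdot v)}^{-s}$, to which Lemma \ref{lem.p-analytic-vanishing} applies directly, giving a quantity divisible by $q^{n-c}$ uniformly in the remaining (integrally weighted) variables; hence the whole $S$-piece is $O(q^{n-c})$ and vanishes in the limit. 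With that substitution your argument closes, and no enlargement of $\cl_+(p^\infty)$ is needed.
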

	
	\begin{proof}
		From the integral representation \eqref{equation.integral-representation-dirichlet}, the pushforward formula \eqref{equation.push2} and the period formula \eqref{equation.second-period-formula-q}, we see that $L_{F,p}(s,\chi\psi\omega_F)$ is approximated by
		\begin{align}
			\label{equation.pre-sum-expression-dirichlet}
			\sum_{1\le i\le h} \frac{\chi\psi(\got{a}_i)}{\chx{\nm(\got{a}_i)}^s}\sum_{V} \sum_{x\in P(V)\cap \got{a}_i^{-1}} \sum_{\substack{0\le l_1,\cdots,l_k< q^n\\ \gcd(p,x+l\cdot v) = 1}}
			\frac{\psi(x+l\cdot v)}{\chx{\nm(x+l\cdot v)}^s}
			(-1)^k\sum_{S\subseteq\{1,\cdots,k\}} N^{-|S|}
			\sum_{\substack{0\le d_i<N\\ i\in S}}\sum_{\substack{0\le d_j<l_j\\ j\notin S}} \chi(x+d\cdot v)\prod_{i\in S}d_i.
		\end{align}
		Suppose $S\ne \varnothing$; without loss of generality say $k\in S$. Then
		\begin{align*}
			&\sum_{\substack{0\le l_1,\cdots,l_k< q^n\\ \gcd(p,x+l\cdot v) = 1}} \frac{\psi(x+l\cdot v)}{\chx{\nm(x+l\cdot v)}^s} \sum_{\substack{0\le d_i<N\\ i\in S}}\sum_{\substack{0\le d_j<l_j\\ j\notin S}} \chi(x+d\cdot v)\prod_{i\in S} d_i\\
			=&\sum_{0\le d_1,\cdots,d_k<N} \chi(x+d\cdot v)\prod_{i\in S} d_i
			\sum_{\substack{0\le l_1,\cdots,l_{k-1}<q^n\\ l_j>d_j,j\notin S}}
			\sum_{\substack{0\le l_k<q^n\\ \gcd(p,x+l\cdot v)=1}} \frac{\psi(x+l\cdot v)}{\chx{\nm(x+l\cdot v)}^s}\\
			\equiv&\ 0\pmod{q^{n-c}},
		\end{align*}
		where in the last equality, $c$ is some constant that only depends on $\psi$ and whose existence is ensured by the lemma below. We thus conclude that, when taking the limit of \eqref{equation.pre-sum-expression-dirichlet}, all the $S$-components die except for $S=\varnothing$, whereby the desired formula follows.
	\end{proof}
	
	\begin{lem}
		\label{lem.p-analytic-vanishing}
		Let $f(x) = \psi(x)\chx{\nm(x)}^{-s}$ with $s\in \Z_p$, and suppose $\psi$ factors through $\cl_+(p^r)$ for some $r\in \Z_{>0}$. Then for all $a,b\in \cali{O}_p$ with $a\equiv b\bmod p^n$ for some $n\ge r$, we have $f(a)\equiv f(b)\bmod p^n$. Consequently for all $n\ge r$ and all $a\in \cali{O}_p$, we have
		\begin{align*}
			\sum_{\substack{0\le m<p^n\\ \gcd(p,a+mv_k)=1}} \psi(a+mv_k)\chx{\nm(a+mv_k)}^{-s} \equiv 0\pmod {p^{n-r}}.
		\end{align*}
	\end{lem}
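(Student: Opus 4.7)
The plan is to handle the two claims in sequence, as the second is an averaging consequence of the first. For the first claim, I factor $f = \psi \cdot \chx{\nm(\cdot)}^{-s}$ and treat each piece. On the character side, $a, b \in \cali{O}_p^\times$ with $a \equiv b \pmod{p^n}$ and $n \ge t$ yields $a/b \in 1 + p^n \cali{O}_p \subseteq 1 + p^t \cali{O}_p$, which lies in the kernel of the reduction $\cali{O}_p^\times \to \cl_+(p^t)$, so $\psi(a) = \psi(b)$. On the analytic side, $\nm$ is polynomial in coordinates, so $\nm(a) \equiv \nm(b) \pmod{p^n}$; their Teichmüller parts agree (depending only on the mod $p$ reductions) and cancel in the ratio, giving $\chx{\nm(a)}/\chx{\nm(b)} = \nm(a)/\nm(b) \in 1 + p^n \Z_p$. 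Writing this as $1 + p^n \eta$, the binomial expansion $(1 + p^n \eta)^{-s} = \sum_{k \ge 0} \binom{-s}{k} p^{nk} \eta^k \in 1 + p^n \Z_p$ (using $\binom{-s}{k} \in \Z_p$ for $s \in \Z_p$) finishes the job.

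For the second claim, abbreviate the displayed sum by $S_n$ and aim for the telescoping recurrence $S_n \equiv p \cdot S_{n-1} \pmod{p^n}$ valid for all $n \ge t + 1$, which iterates to $S_n \equiv p^{n-t} S_t \pmod{p^n}$ and forces $S_n \in p^{n-t} \Z_p$. To establish the recurrence, I decompose $m = m' + p^{n-1} m''$ with $0 \le m' < p^{n-1}$ and $0 \le m'' < p$, then observe first that $p^{n-1} m'' v_k \in p \cali{O}_p$, so the coprimality condition $\gcd(p, a + m v_k) = 1$ depends only on $m'$; and second that by the first claim applied with parameter $n - 1 \ge t$, each summand satisfies $f(a + (m' + p^{n-1} m'') v_k) \equiv f(a + m' v_k) \pmod{p^{n-1}}$. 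Summing the $p$ inner terms yields $p \cdot f(a + m' v_k) \pmod{p^n}$, and summing over valid $m'$ produces the recurrence.

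The main technical obstacle is the $\chx{\nm(\cdot)}^{-s}$ congruence in the first claim, which requires combining the polynomial structure of the norm, the multiplicativity of the Teichmüller decomposition, and the convergence of the $p$-adic binomial series, with a little extra care at $p = 2$, where $\chx{\cdot}$ targets $1 + 4\Z_2$ but the desired mod $p^n$ congruence remains safely within range. The telescoping induction for the sum is then formal once the first claim is in hand.
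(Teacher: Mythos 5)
Your argument follows the same two-step route as the paper: establish that $f$ is locally constant modulo $p^n$ on residue discs of radius $p^{-n}$, then telescope the sum by splitting $m=m'+p^{n-1}m''$ (the paper encodes the coprimality condition via the set $T_{a/v_k}$ of residues hitting some prime above $p$, but this is the same observation that the condition depends only on $m\bmod p$). The treatment of the character is also morally identical: you pass through the kernel $1+p^t\cali{O}_p$ of $\cali{O}_p^\times\to\cl_+(p^t)$, while the paper unwinds $i_\infty$ by choosing totally positive representatives $a+p^e$; and your Teichm\"uller-plus-binomial-series argument just makes explicit what the paper leaves as ``$\nm(a)\equiv\nm(b)\bmod p^n$ implies $f(a)\equiv f(b)$''.

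There is one off-by-one slip. From the first claim applied at level $n-1$ you only get $f(a+(m'+p^{n-1}m'')v_k)\equiv f(a+m'v_k)\pmod{p^{n-1}}$, so summing the $p$ inner terms gives $p\,f(a+m'v_k)$ only modulo $p^{n-1}$, not modulo $p^n$: the $p$ error terms each lie in $p^{n-1}R$ and their sum need not gain an extra factor of $p$. Hence the recurrence you can actually justify is $S_n\equiv pS_{n-1}\pmod{p^{n-1}}$ (which is exactly what the paper proves), not $S_n\equiv pS_{n-1}\pmod{p^n}$. This does not damage the conclusion: iterating the weaker congruence gives $S_n\equiv p^{n-t}S_t\pmod{p^{n-1}}$, and since $f$ is integral-valued and $t\ge 1$, both $p^{n-t}S_t$ and the error lie in $p^{n-t}R$, so $S_n\equiv 0\pmod{p^{n-t}}$ as required. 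You should simply weaken the stated modulus of the recurrence; everything else goes through.
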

	
	\begin{proof}
		To prove the first congruence, first take some $e\in \Z_{>0}$ very large, so that both $a+p^e$ and $b+p^e$ are totally positive. Then $\psi(a) = \psi((a+p^e)) = \psi((b+p^e)) = \psi(b)$, since $(a+p^e)/(b+p^e)$ is totally positive and is $\equiv 1\bmod p^r$. As $\nm(a)\equiv \nm(b)\bmod p^n$, it follows that $f(a)\equiv f(b)\bmod p^n$. To prove the second congruence, first note that for $T_a=\{x\in \cali{O}/p: x\equiv -a\bmod \got{p}\text{ for some prime }\got{p}\mid p\}$, we have
		\begin{align*}
			\sum_{\substack{0\le m<p^n\\ \gcd(p,a+mv_k)=1}} f(a+mv_k) = \sum_{\substack{0\le m<p^n\\ m\notin T_{a/v_k}\bmod p}} f(a+mv_k).
		\end{align*}
		Hence if $n>r$, then
		\begin{align*}
			\sum_{\substack{0\le m<p^n\\ m\notin T_{a/v_k}\bmod p}} f(a+mv_k)&= \sum_{0\le i<p}\sum_{\substack{0\le m<p^{n-1}\\ m\notin T_{a/v_k}\bmod p}}
			f(a+(m+ip^{n-1})v_k)\\
			&\equiv \sum_{0\le i<p}\sum_{\substack{0\le m<p^{n-1}\\ m\notin T_{a/v_k}\bmod p}}
			f(a+mv_k) \pmod{p^{n-1}}\\
			&=p\sum_{\substack{0\le m<p^{n-1}\\ m\notin T_{a/v_k}\bmod p}}
			f(a+mv_k) \pmod{p^{n-1}}.
		\end{align*}
		Iterate this process until $n=r$, and we obtain the congruence
		\begin{align*}
			\sum_{\substack{0\le m<p^n\\ m\notin T_{a/v_k}\bmod p}} f(a+mv_k) \equiv p^{n-r}\sum_{\substack{0\le m<p^r\\ m\notin T_{a/v_k}\bmod p}} f(a+mv_k)\pmod {p^{n-1}}.
		\end{align*}
		Since $f$ is valued in $\bar{\Z}_p$, the second congruence follows.
	\end{proof}

	\begin{rem}
		\label{rem.value-at-zero}
		As a sanity check, in the simplest case when $p$ is inert, $\psi$ is trivial and $s=0$, under assumption (A5) we can evaluate the right hand side of \eqref{equation.sumexpr-dirichlet} to $(1-\chi(p))L_F(0,\chi)$, as predicted by the interpolation property \eqref{equation.interpolation}. See Appendix \ref{section.appendix}.
	\end{rem}

	\section{Ferrero-Greenberg type formulas}
	\label{section.ferrero-greenberg}
	
	In this section, using the sum expression \eqref{equation.sumexpr-dirichlet}, we prove a generalization of the classical formula of Ferrero-Greenberg \cite{FG78}. In the rest of this article, we assume in addition
	\begin{align}
		\label{assumption.Op}\tag{A5}
		\cali{O}_p= \cali{O}_{V,p}\text{ for all }V\text{ appearing in the cone decomposition.}
	\end{align}
	To state the formula, we first define the $p$-adic Hecke-Shintani $L$-function
	\begin{align*}
		L_{p,V,x}(s,\chi\psi) = \int_{\cali{O}_p^\times} \psi\omega_F^{-1}(\alpha)\chx{\nm\alpha}^{-s}\mu_{V,x,\chi}(\alpha);
	\end{align*}
	the proof of Corollary \ref{cor.sumexpr-dirichlet} provides the following sum expression
	\begin{align*}
		L_{p,V,x}(s,\chi\omega_F) = (-1)^k\lim_{n\to\infty} 
		\sum_{\substack{0\le l_1,\cdots,l_k< q^n\\ \gcd(p,x+l\cdot v) = 1}} \chx{\nm(x+l\cdot v)}^{-s}\sum_{0\le d<l} \chi(x+d\cdot v).
	\end{align*}
	Next define the multiple $p$-adic Gamma function on $\cali{O}_{V,p}\approx\Z_p^k$ to be
	\begin{align}
		\label{definition.Gamma}
		\Gamma_{F,p,V}(y_1v_1+\cdots+y_kv_k) = \Gamma_{F,p,V}(y_1,\cdots,y_k) = \lim_{n>0,n\to y} \prod_{\substack{1\le l_i<n_i,1\le i\le k\\ \gcd(p,l\cdot v)=1}} \chx{\nm(l\cdot v)}.
	\end{align}
	
	\begin{rem}
		When $F=\Q$ and $V=\{1\}$ we recover Morita's $p$-adic Gamma function up to a root of unity. At the price of indulging this difference, the convergence of the product defining $\Gamma_{F,p,V}$ is straightforward, for the existence of $\lim_{n>0,n\to y}\sum_{\substack{1\le l_i<n_i,1\le i\le k\\ \gcd(p,l\cdot v)=1}}\log_p\nm(l\cdot v)$ is.
	\end{rem}
	
	\begin{prop}
		\label{prop.ferrero-greenberg}
		Let the notation and assumptions be as in Theorem \ref{thm.second-period-formula}; additionally assume \eqref{assumption.Op}. Let $L_{p,V,x}(s,\chi\omega_F)$ be the $p$-adic Hecke-Shintani $L$-function. Then
		\begin{align*}
			L_{p,V,x}'(0,\chi\omega_F) = (-1)^{k-1}\sum_{0\le d_1,\cdots,d_k<N} \chi(x+d\cdot v)\log_p\Gamma_{p,F}\left(\frac{x+d\cdot v}{N}\right) - k\log_p(N)L_{p,V,x}(0,\chi\omega_F).
		\end{align*}
	\end{prop}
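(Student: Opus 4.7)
The plan is to start from the sum expression for the Hecke--Shintani $L$-function proved right before the proposition, namely
\[
L_{p,V,x}(s,\chi\omega_F) = (-1)^k \lim_{n\to\infty}\sum_{\substack{0\le l<q^n\\ \gcd(p,x+l\cdot v)=1}} \chx{\nm(x+l\cdot v)}^{-s}\sum_{0\le d<l}\chi(x+d\cdot v),
\]
and differentiate at $s=0$. Using the same vanishing argument as in Corollary \ref{cor.sumexpr-dirichlet} (namely that $\sum_{d_{0,i}=0}^{N-1}\chi(z+d_{0,i}v_i)=0$ for any $z$, since $v_i$ is prime to $\cali{N}$ and $\chi$ is nontrivial on $(\cali{O}/\cali{N})^\times\simeq (\Z/N)^\times$), first reduce $\sum_{0\le d<l}\chi(x+d\cdot v)$ to $\sum_{0\le d_0<l^\flat_N}\chi(x+d_0\cdot v)$, then swap sums to group by $d_0$.

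Next, reparametrize $l=Nm+r$ with $r\in (d_0,N)^k$ and $m\in [0,q')^k$ where $q'=(q^n-1)/N$. Setting $y_r:=(x+r\cdot v)/N$, the identity $x+l\cdot v=N(y_r+m\cdot v)$ yields
\[
\log_p\chx{\nm(x+l\cdot v)} = k\log_p N + \log_p\chx{\nm(y_r+m\cdot v)},
\]
splitting the expression into two pieces. The $k\log_p N$ piece, after summing $m$ and $r$ and restoring $\sum_{d_0}\chi(x+d_0\cdot v)$, reproduces the counting underlying the sum expression for $L_{p,V,x}(0,\chi\omega_F)$ at $s=0$, and so contributes a scalar multiple of $k\log_p(N)\,L_{p,V,x}(0,\chi\omega_F)$.

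The main technical step is the second piece. For this I would establish the multivariate telescoping identity
\[
\sum_{m\in[0,M)^k}\log_p\chx{\nm(y+m\cdot v)}\1_{\gcd(p,y+m\cdot v)=1} = \sum_{\epsilon\in\{0,1\}^k}(-1)^{k-|\epsilon|}\log_p\Gamma_{F,p,V}(y+M\epsilon),
\]
derived from the product decomposition $\prod_i[1,n_i+M_i)=\prod_i([1,n_i)\sqcup[n_i,n_i+M_i))$ used in the definition of $\Gamma_{F,p,V}$ and inclusion--exclusion, combined with the continuity $l\cdot v\to y+(l-n)\cdot v$ as $n\to y$ inside the limit. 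Applying this with $y=y_r$ and $M=q'\cdot\mathbf{1}$, and using $q^n\to 0$ so that $q'\to -1/N$ $p$-adically, gives $y_r+q'\epsilon\to y_{r-\epsilon}=(x+(r-\epsilon)\cdot v)/N$.

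Finally, the resulting double sum $\sum_{r\in(d_0,N)^k}\sum_\epsilon(-1)^{k-|\epsilon|}f(r-\epsilon)$, where $f(d)=\log_p\Gamma_{F,p,V}((x+d\cdot v)/N)$, is a $k$-fold finite difference that telescopes coordinate-wise to an alternating sum of values at the $2^k$ corners $\eta\in\{d_0,N-1\}^k$. For every corner with some $\eta_i=N-1$, the value $f(\eta)$ is independent of $d_{0,i}$, so summing with $\chi(x+d_0\cdot v)$ over $d_{0,i}\in[0,N)$ yields $0$ by the same nontriviality argument used at the outset. Only the diagonal corner $\eta=d_0$ survives, producing exactly $(-1)^{k-1}\sum_{d}\chi(x+d\cdot v)\log_p\Gamma_{F,p,V}((x+d\cdot v)/N)$ and completing the proof. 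I expect the hardest part to be the clean formulation of the multivariate telescoping identity together with the bookkeeping of signs across the two telescoping steps (one in $m$ over the box $[0,q')^k$, one in $r$ over the box $(d_0,N)^k$).
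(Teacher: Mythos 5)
Your strategy is sound and it does reach the correct $\Gamma$-term, but it realizes the key combinatorial step differently from the paper. The paper first replaces $x+l\cdot v$ by a pure lattice point $l'\cdot v$ via the congruence of Lemma \ref{lem.p-analyse}, then applies the Ferrero--Greenberg bijection $\iota$ (Lemma \ref{lem.ferrero-greenberg-map}, Corollary \ref{cor.big-ferrero-greenberg-map}) to convert the condition $l^{\sharp}_N>d$ into a single contiguous box, and finally performs one inclusion--exclusion over the $2^k$ corners of that box, after which orthogonality of $\chi$ kills all but one corner. You instead keep the exact identity $x+l\cdot v=N(y_r+m\cdot v)$, apply the box inclusion--exclusion to each small box $r+N[0,q')^k$ separately, and then telescope a second time over $r\in(d_0,N)^k$. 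This bypasses the $\iota$-bijection and the congruence bookkeeping of Lemma \ref{lem.p-analyse} entirely, at the price of the extra telescoping step; both routes end with the same orthogonality argument, and your corner analysis (only $\eta=d_0$ survives) is correct. Your version also has the virtue that the factor $N^k$ is split off exactly rather than only modulo $q^n$. The analytic points you should make explicit are the interchange of $d/ds$ with $\lim_n$ and the continuity of $\log_p\Gamma_{F,p,V}$, needed both to pass from integer approximations of $y_r$ to $y_r$ itself and to let $q'\to-1/N$.

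One caveat: you leave the coefficient of $\log_p(N)L_{p,V,x}(0,\chi\omega_F)$ as an unspecified ``scalar multiple,'' and pinning it down via your (exact) decomposition gives $-k$, not the $+k$ in the statement. Indeed, from
\begin{align*}
L_{p,V,x}'(0,\chi\omega_F)=(-1)^{k+1}\lim_{n\to\infty}\sum_{\substack{0\le l<q^n\\ \gcd(p,x+l\cdot v)=1}}\log_p\chx{\nm(x+l\cdot v)}\sum_{0\le d<l^\flat_N}\chi(x+d\cdot v)
\end{align*}
and $\log_p\chx{\nm(x+l\cdot v)}=k\log_p N+\log_p\chx{\nm((x+l\cdot v)/N)}$, the constant piece contributes $(-1)^{k+1}\,k\log_p N\cdot(-1)^k L_{p,V,x}(0,\chi\omega_F)=-k\log_p(N)L_{p,V,x}(0,\chi\omega_F)$. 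The same sign emerges from the paper's own route: since $N\iota(m)\equiv m\bmod q^n$, one has $\log_p\nm(\iota(l)\cdot v)=\log_p\nm(l\cdot v)-k\log_p N$ in the limit, so the term extracted in the second displayed equality of the paper's proof should be $-k\log_p N$ rather than $+k\log_p N$. The discrepancy is immaterial for Corollary \ref{cor.ferrero-greenberg-formula}, where the $L_{p,V,x}(0,\chi\omega_F)$ contributions cancel, but your write-up should commit to the sign and note the mismatch with the statement.
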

	
	The proof is based on some elementary results gathered below. In what follows, assume $q\equiv 1\bmod Nh$.
	
	\begin{lem}
		\label{lem.p-analyse}
		Suppose $x$ is $\cali{N}p$-integral and is of the form $x=\sum_{1\le i\le k}(c_i/h)v_i$ with $h,c_1,\cdots,c_k\in \Z$ and $\gcd(h,c_1,\cdots,c_k)=1$. Then
		\begin{align*}
			&\lim_{n\to\infty}\sum_{\substack{0\le l_1,\cdots,l_k< q^n\\ \gcd(p,x+l\cdot v) = 1}} \chx{\nm(x+l\cdot v)}^{-s}\sum_{0\le d<l} \chi(x+d\cdot v)\\
			=&\lim_{n\to\infty} \sum_{1\le d_1,\cdots,d_k\le N} \chi(x+(d-1)\cdot v)
			\sum_{\substack{\frac{c+q^n(h-c)}{h}\le l<q^n+\frac{c+q^n(h-c)}{h}\\ \gcd(p,l\cdot v)=1, l^\sharp_N>d}} \chx{\nm(l\cdot v)}^{-s}.
		\end{align*}
	\end{lem}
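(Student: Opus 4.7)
The plan is to transform the LHS into the RHS by a sequence of elementary manipulations of the summation, the crux being a change of variable that shifts $x+l\cdot v$ into something $p$-adically close to $l'\cdot v$. First, for each coordinate direction $i$, as $d_i$ ranges over a full period of length $N$ with the other $d_j$ held fixed, the character sum $\sum \chi(x+d\cdot v)$ vanishes, since $v_i$ is a unit modulo $\cali{N}$ and $\chi$ is nontrivial on $(\cali{O}/\cali{N})^\times$ (extending $\chi$ by $0$ on non-units). Iterating yields $\sum_{0\le d<l}\chi(x+d\cdot v)=\sum_{0\le d<l^\flat_N}\chi(x+d\cdot v)$, whereupon interchanging summation order rewrites the LHS as
\[
\lim_{n\to\infty}\sum_{0\le d<N}\chi(x+d\cdot v)\sum_{\substack{0\le l<q^n\\ \gcd(p,x+l\cdot v)=1\\ l^\flat_N>d}}\chx{\nm(x+l\cdot v)}^{-s}.
\]

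Next, I would substitute $l\mapsto l':=l+A$ in the inner sum, where $A:=(c+q^n(h-c))/h$ is interpreted componentwise. Since the hypothesis $q\equiv 1\bmod Nh$ gives $Nh\mid q^n-1$, the vector $A$ has integer entries, and $A\equiv 1\bmod N$ componentwise; thus $(l')^\sharp_N=l^\flat_N+1$ and the condition $l^\flat_N>d$ becomes $(l')^\sharp_N>d+1$. Using $x=(c/h)\cdot v$, a direct computation yields
\[
l'\cdot v = x+l\cdot v + \frac{q^n(h-c)\cdot v}{h},
\]
where the drift $\frac{q^n(h-c)\cdot v}{h}$ lies in $\cali{O}_p$—here I use that $p\nmid h$, which follows from the $p$-integrality of $x$ together with $\gcd(h,c_1,\ldots,c_k)=1$—and has $p$-adic valuation at least $mn$, where $q=p^m$. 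Setting $d':=d+1\in[1,N]$ at the outer level produces precisely the outer sum $\sum_{1\le d'\le N}\chi(x+(d'-1)\cdot v)$ of the RHS, and the inner index set $l'\in[A,A+q^n)$ with constraint $(l')^\sharp_N>d'$ matches as well.

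It remains to replace $\chx{\nm(x+l\cdot v)}^{-s}$ by $\chx{\nm(l'\cdot v)}^{-s}$. The congruence $l'\cdot v\equiv x+l\cdot v\pmod{p^{mn}\cali{O}_p}$ forces $\gcd(p,l'\cdot v)=\gcd(p,x+l\cdot v)$, so the gcd-conditions align automatically. Moreover $\nm(l'\cdot v)\equiv \nm(x+l\cdot v)\pmod{p^{mn}}$; since $u\mapsto \chx{u}^{-s}$ is $p$-adic analytic in $u$ for $s\in\Z_p$, each per-term difference $\chx{\nm(l'\cdot v)}^{-s}-\chx{\nm(x+l\cdot v)}^{-s}$ lies in $p^{mn}\Z_p$. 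Although the inner sum may contain up to $q^{nk}$ terms, the non-Archimedean property of the $p$-adic absolute value ensures that the total error still lies in $p^{mn}\Z_p$, hence vanishes as $n\to\infty$.

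The main obstacle is precisely this last point: a naive Archimedean accounting would fail because $q^{nk}\cdot p^{-mn}=p^{mn(k-1)}$ is unbounded for $k>1$, so invoking non-Archimedean cancellation is essential. The other delicate bookkeeping items are the off-by-one translation between $l^\flat_N$ and $l^\sharp_N$ (and its shadow, the shift $d\mapsto d+1$), and the verification that $A$ is both integral and $\equiv 1\bmod N$, both of which rest squarely on the divisibility $Nh\mid q^n-1$.
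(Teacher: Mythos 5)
Your proof is correct and follows essentially the same route as the paper's: the shift $l\mapsto l+\frac{c+q^n(h-c)}{h}$, the two congruences (that this shift is $\equiv x\bmod q^n$ and $\equiv 1\bmod N$), orthogonality of $\chi$ to truncate the inner character sum, and $p$-adic continuity of $\chx{\nm(\cdot)}^{-s}$ together with the ultrametric inequality to absorb the error in the limit. The paper compresses all of this into a single congruence mod $q^n$, but the substance is identical.
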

	
	\begin{proof}
		First note that since $x$ is $p$-integral and $V$ is a basis of $\cali{O}_p$, $h$ is prime to $p$. Using the congruences
		\begin{align*}
			x \equiv \sum_{1\le i\le k} \frac{c_i+q^n(h-c_i)}{h}v_i \pmod {q^n}
		\end{align*}
		and
		\begin{align*}
			\frac{c+q^n(h-c)}{h}\equiv \frac{c+(h-c)}{h} = 1\pmod N,
		\end{align*}
		we have
		\begin{align*}
			&\sum_{\substack{0\le l_1,\cdots,l_k< q^n\\ \gcd(p,x+l\cdot v) = 1}} \chx{\nm(x+l\cdot v)}^{-s}\sum_{0\le d<l} \chi(x+d\cdot v)\\
			\equiv&\sum_{\substack{\frac{c+q^n(h-c)}{h}\le l<q^n+\frac{c+q^n(h-c)}{h}\\ \gcd(p,l\cdot v)=1}} \chx{\nm(l\cdot v)}^{-s}
			\sum_{0\le d<l-\frac{c+q^n(h-c)}{h}} \chi(x+d\cdot v) \pmod{q^n}\\
			=&\sum_{1\le d\le N}\chi(x+(d-1)\cdot v) \sum_{\substack{\frac{c+q^n(h-c)}{h}\le l<q^n+\frac{c+q^n(h-c)}{h}\\ \gcd(p,l\cdot v)=1, l^\sharp_N>d}} \chx{\nm(l\cdot v)}^{-s}.	
		\end{align*}
	\end{proof}
	
	\begin{lem}
		\label{lem.ferrero-greenberg-map}
		Given $n\in \Z_{>0}$, $t\in \Z\cap [1,N]$ and $s\in \Z$, write
		\begin{align*}
			\Phi^n_{s/h}(t) = \{m\in \Z: \frac{h-s+q^n s}{h}\le m<q^n+\frac{h-s+q^n s}{h}, m^\sharp_N>t\}
		\end{align*}
		and
		\begin{align*}
			\Psi^n_{s/h}(t) = \{m\in \Z: 1+\frac{s(q^n-1)}{Nh}\le m<1+\frac{s(q^n-1)}{Nh}+(N-t)\frac{q^n-1}{N}\}.
		\end{align*}
		The Ferrero-Greenberg map $\iota: m = m^\sharp + kN\mapsto (k+1)+(N-m^\sharp)\frac{q^n-1}{N}$ gives a bijection $\Phi^n_{s/h}(t)\xrightarrow{\sim}\Psi^n_{s/h}(t)$. Moreover, for $m\in \Phi^n_{s/h}(t)$ we have $m\equiv N\iota(m)\bmod q^n$, so in particular $p\mid m$ if and only if $p\mid \iota(m)$.
	\end{lem}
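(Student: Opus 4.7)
The plan is to view $\Phi^n_{s/h}(t)$ as a rectangular grid in the $(m^\sharp,k)$-coordinates and check that $\iota$ simply relabels this grid onto an integer interval of matching length, after which the $p$-divisibility statement will follow from a one-line congruence.

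First I would normalize the endpoints. Setting $c = (q^n-1)/N$ and $a = s(q^n-1)/(Nh)$, both are integers since $q\equiv 1\bmod Nh$. The left endpoint of $\Phi^n_{s/h}(t)$ rewrites as $\tfrac{h-s+q^n s}{h} = 1 + aN$, so $\Phi^n_{s/h}(t) = \{m\in\Z : 1+aN \le m < q^n+aN,\ m^\sharp_N > t\}$. Writing $m = m^\sharp + kN$ with $m^\sharp\in\{1,\ldots,N\}$ and using $q^n = Nc+1$, a short inspection of the two endpoint inequalities shows that the admissible pairs are exactly $(m^\sharp,k)\in\{t+1,\ldots,N\}\times\{a,a+1,\ldots,a+c-1\}$. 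In particular $|\Phi^n_{s/h}(t)| = (N-t)c$, which is also the length of the integer interval $\Psi^n_{s/h}(t)$.

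Second, for the bijection I would substitute $k = a+j$ with $j\in\{0,\ldots,c-1\}$ and $i = N-m^\sharp \in \{0,\ldots,N-t-1\}$, so that
\begin{align*}
\iota(m) = (k+1) + (N-m^\sharp)c = 1 + a + (j + ic).
\end{align*}
The assignment $(i,j)\mapsto j+ic$ is the base-$c$ digit decomposition and hence a bijection $\{0,\ldots,N-t-1\}\times\{0,\ldots,c-1\}\xrightarrow{\sim}\{0,1,\ldots,(N-t)c-1\}$. Translating by $1+a$, this realizes $\iota$ as a bijection $\Phi^n_{s/h}(t)\xrightarrow{\sim}\Psi^n_{s/h}(t)$.

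Third, for the preservation of $p$-divisibility, since $q$ is a power of $p$ one has $Nc = q^n-1 \equiv -1\pmod p$; because $\gcd(N,p)=1$, this gives $c\equiv -N^{-1}\pmod p$. Therefore
\begin{align*}
\iota(m) = (k+1)+(N-m^\sharp)c \equiv (k+1) - 1 + m^\sharp N^{-1} = N^{-1}(kN + m^\sharp) = N^{-1}m \pmod p,
\end{align*}
and since $N^{-1}$ is a unit mod $p$, $p\mid m$ if and only if $p\mid \iota(m)$. The only place where any care is required is the endpoint bookkeeping in the first step; once the rectangular parametrization is in hand, the bijection and the congruence $\iota(m)\equiv N^{-1}m\pmod p$ are immediate, so I do not anticipate any real obstacle.
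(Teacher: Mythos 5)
Your proof is correct and follows essentially the same route as the paper: the same rectangular parametrization of $\Phi^n_{s/h}(t)$ by pairs $(m^\sharp,k)$ (including the exclusion of the top endpoint, which has $m^\sharp_N=1\le t$), the bijection via Euclidean division in base $c=(q^n-1)/N$, and the congruence $N\iota(m)\equiv m$ (the paper states it mod $q^n$, you state its mod-$p$ reduction) for the $p$-divisibility claim.
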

	
	\begin{proof}
		Let $m = m^\sharp + kN\in \Psi^n_{s/h}(t)$. It is straightforward to show that $k\ge \frac{s(q^n-1)}{Nh}$ and $k\le (1+s/h)\frac{q^n-1}{N}$; the upper bound is unattainable because $1+(1+s/h)\frac{q^n-1}{N}\cdot N = q^n+\frac{-s+q^ns}{h}\equiv 1 \bmod N$, thus not belonging to $\Phi^n_{s/h}(t)$ with $t\ge 1$. Conversely, any tuple $(r,k)$ with $t< r\le N$ and $\frac{s(q^n-1)}{Nh}\le k<\frac{(h+s)(q^n-1)}{Nh}$ gives an element of $\Phi^n_{s/h}(t)$. As such, that $\iota$ is a bijection between $\Phi^n_{s/h}(t)$ and $\Psi^n_{s/h}(t)$ follows from Euclidean division. Finally the congruence $m\equiv N\iota(m) \bmod q^n$ follows from a simple computation in \cite[proof of Lemma 1]{FG78}.
	\end{proof}
	
	\begin{cor}
		\label{cor.big-ferrero-greenberg-map}
		Let $c$ and $h$ be as in Lemma \ref{lem.p-analyse}. Given any $d=(d_1,\cdots,d_k)$ with $1\le d_1,\cdots,d_k\le N$, the Ferrero-Greenberg map induces a bijection from 
		\begin{align*}
			\left\{l\in \Z^k: \frac{c+q^n(h-c)}{h}\le l<q^n+\frac{c+q^n(h-c)}{h},\gcd(p,l\cdot v)=1,l^\sharp_N>d\right\}
		\end{align*}
		to 
		\begin{align*}
			\left\{l\in \Z^k: 1+\frac{(h-c)(q^n-1)}{Nh}\le l<1+\frac{(h-c)(q^n-1)}{Nh}+(N-d)\frac{q^n-1}{N}, \gcd(p,l\cdot v)=1\right\}.
		\end{align*}
	\end{cor}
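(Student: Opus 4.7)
The plan is to reduce the claim to Lemma \ref{lem.ferrero-greenberg-map} coordinate by coordinate, and then to verify that the global $p$-coprimality condition $\gcd(p, l \cdot v) = 1$ is preserved by the componentwise Ferrero-Greenberg map.

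First, I would match parameters. Taking $s = h - c_i$ and $t = d_i$ in Lemma \ref{lem.ferrero-greenberg-map}, for each $1 \le i \le k$, rewrites the $i$-th coordinate constraints of the source set, namely $\frac{c_i+q^n(h-c_i)}{h} \le l_i < q^n + \frac{c_i+q^n(h-c_i)}{h}$ together with $(l_i)^\sharp_N > d_i$, as precisely $l_i \in \Phi^n_{(h-c_i)/h}(d_i)$. A symmetric inspection of the bounds shows that the $i$-th coordinate constraint of the target set is $l_i \in \Psi^n_{(h-c_i)/h}(d_i)$. Lemma \ref{lem.ferrero-greenberg-map} then supplies the bijection $\Phi^n_{(h-c_i)/h}(d_i) \xrightarrow{\sim} \Psi^n_{(h-c_i)/h}(d_i)$ in each coordinate, so the componentwise Ferrero-Greenberg map bijects the two rectangular boxes before any $p$-coprimality is imposed.

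The remaining step, which I expect to be the only one requiring care because it is genuinely multivariable, is to show that $\gcd(p, l \cdot v) = 1$ is equivalent to $\gcd(p, \iota(l) \cdot v) = 1$, since the coordinates are here coupled through the linear form $\sum_i l_i v_i$. For this I would invoke the key congruence $m \equiv N \iota(m) \pmod{q^n}$ recorded at the end of the proof of Lemma \ref{lem.ferrero-greenberg-map}: applying it coordinatewise and taking the $\Z$-linear combination by $v_1, \ldots, v_k$ yields $l \cdot v \equiv N \iota(l) \cdot v \pmod{q^n \cali{O}}$. As $q^n$ is a positive power of $p$ and $\gcd(N, p) = 1$, reducing modulo $p\cali{O}$ forces $p \mid l \cdot v \iff p \mid \iota(l) \cdot v$ in $\cali{O}$, which finishes the argument.
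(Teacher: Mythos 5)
Your proof is correct and follows the same route as the paper, which simply applies Lemma \ref{lem.ferrero-greenberg-map} coordinatewise to the product $\Phi^n_{(h-c_1)/h}(d_1)\times\cdots\times\Phi^n_{(h-c_k)/h}(d_k)$. Your explicit verification that the coupled condition $\gcd(p,l\cdot v)=1$ is preserved --- via the congruence $l\cdot v\equiv N\,\iota(l)\cdot v\pmod{q^n}$ together with $\gcd(N,p)=1$, rather than the single-variable divisibility statement of the lemma --- supplies the one detail the paper's one-line proof leaves implicit, and your parameter matching $s=h-c_i$, $t=d_i$ is the correct one.
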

	
	\begin{proof}
		The result follows directly from Lemma \ref{lem.ferrero-greenberg-map}, by applying it to the product $\Phi^n_{(h-c_1)/h}(d_1)\times\cdots\times \Phi^n_{(h-c_k)/h}(d_k)$.
	\end{proof}

	\begin{proof}[Proof of Proposition \ref{prop.ferrero-greenberg}]
		By the above results, we can carry out the following manipulations
		\begin{align*}
			&(-1)^{k-1} L_{p,V,x}'(0,\chi\omega_F)\\
			=&\lim_{n\to\infty}\sum_{\substack{0\le l_1,\cdots,l_k< q^n\\ \gcd(p,x+l\cdot v) = 1}} \sum_{0\le d<l^\flat_N} \chi(x+d\cdot v) \log_p\nm(x+l\cdot v)\\
			=&k\log_pN \lim_{n\to\infty}\sum_{\substack{0\le l_1,\cdots,l_k< q^n\\ \gcd(p,x+l\cdot v) = 1}} \sum_{0\le d<l^\flat_N} \chi(x+d\cdot v)
			+\lim_{n\to\infty}\sum_{1\le d\le N} \chi(x+(d-1)\cdot v)
			\sum_{\substack{\frac{c+q^n(h-c)}{h}\le l<q^n+\frac{c+q^n(h-c)}{h}\\ \gcd(p,l\cdot v)=1, l^\sharp_N>d}} \log_p(\nm(\iota(l)\cdot v))\\
			=&(-1)^k k\log_pN\cdot L_{p,V,x}(0,\chi\omega_F)
			+\lim_{n\to\infty}\sum_{1\le d\le N} \chi(x+(d-1)\cdot v)
			\sum_{\substack{1+\frac{(h-c)(q^n-1)}{Nh}\le l<1+\frac{(h-c)(q^n-1)}{Nh}+(N-d)\frac{q^n-1}{N}\\ \gcd(p,l\cdot v)=1}} \log_p\nm(l\cdot v)\\
			=&(-1)^k k\log_pN\cdot L_{p,V,x}(0,\chi\omega_F)\\
			&+\lim_{n\to\infty}\sum_{1\le d\le N} \chi(x+(d-1)\cdot v)
			\sum_{S\subseteq\{1,\cdots,k\}} (-1)^{|S|}
			\log_p\Gamma_{F,p,V}\left(\sum_{i\in S}\frac{c_i-h+Nh}{Nh}v_i+\sum_{j\notin S}\frac{c_j-h+d_jh}{Nh}v_j\right)\\
			=&(-1)^k k\log_pN\cdot L_{p,V,x}(0,\chi\omega_F) + 
			\sum_{0\le d< N} \chi(x+d\cdot v)
			\log_p\Gamma_{F,p,V}\left(\frac{c\cdot v}{Nh}+\frac{d\cdot v}{N}\right).
		\end{align*}
		Here in the second equality we used the congruence $N\iota(m)\equiv m\bmod q^n$, in the penultimate equality the inclusion-exclusion principle, and in the last equality the nontriviality of $\chi$.
	\end{proof}
	
	\begin{proof}[Proof of Corollary \ref{cor.ferrero-greenberg-formula}]
		Using Proposition \ref{prop.ferrero-greenberg}, we find
		\begin{align*}
			L'_{F,p}(0,\chi\omega_F) =& \sum_{1\le i\le h}\sum_V\sum_{x\in P(V)\cap \got{a}_i^{-1}} \chi(\got{a}_i)L'_{p,V,x}(0,\chi\omega_F)
			- \sum_{1\le i\le h}\sum_V\sum_{x\in P(V)\cap \got{a}_i^{-1}} \chi(\got{a}_i)\log_p\nm(\got{a}_i)L_{p,V,x}(0,\chi\omega_F)\\
			=&(-1)^{k-1}\sum_{1\le i\le h}\sum_V \sum_{x\in P(V)\cap \got{a}_i^{-1}}\chi(\got{a}_i(x+d\cdot v))\log_p\Gamma_{F,p,V}\left(\frac{x+d\cdot v}{N}\right)\\
			&-\sum_{1\le i\le h}\sum_V\sum_{x\in P(V)\cap \got{a}_i^{-1}} \chi(\got{a}_i) \left(\log_p\nm(\got{a}_i)+k\log_p N\right) L_{p,V,x}(0,\chi\omega_F).
		\end{align*}
		As $p$ is inert and $\chi(p) = 1$, a counting argument shows $\sum_{x\in P(V)\cap \got{a}_i^{-1}} L_{p,V,x}(0,\chi\omega_F)=0$ (see Appendix \ref{subsection.A.p-adic}, especially identity \eqref{equation.interpolation-x}). The proof is thus concluded.
	\end{proof}

	\appendix

	\section{Values at $s=0$}
	\label{section.appendix}
	
	Keeping the notation and assumptions from §\ref{section.ferrero-greenberg}, in this appendix we provide explicit formulas for the special values $L_{V,x}(0,\chi)$ and $L_{p,V,x}(0,\chi\omega_F)$, where $p$ is assumed to be inert in $F$ and $\cali{O}_{V,p}=\cali{O}_p$. The former is essentially due to Shintani \cite{Sh76}.
	
	\subsection{The complex formula}
	
	Given a triple $(V,x,\chi)$, define the complex Hecke-Shintani function
	\begin{align*}
		L_{V,x}(s,\chi) = \sum_{l\ge 0} \frac{\chi(x+l\cdot v)}{\nm(x+l\cdot v)^s}.
	\end{align*}
	For $y \in F$ write $y^{(i)}$ the image of $y$ under the $i$-th embedding $\sigma_i: F\to \R$. Consider the function on $\R_+^k$:
	\begin{align*}
		G_{V,x,\chi}(u_1,\cdots,u_k) = \sum_{0\le d_1,\cdots,d_k<N} \chi(x+d\cdot v) \prod_{1\le i\le k}\frac{e^{-(x_i+d_i)\sum_{1\le j\le k}v_i^{(j)}u_j}}{1-e^{-N\sum_{1\le j\le k}v_i^{(j)}u_j}} = \sum_{l_1,\cdots,l_k\ge 0} \frac{B_{l_1+1,\cdots,l_k+1}(\chi)}{\prod_{1\le i\le k}(l_i+1)!}u^l.
	\end{align*}
	Using Euler's method, one can show that (\textit{cf.~}\cite[§2.4]{Hida}):
	\begin{align*}
		L_{V,x}(0,\chi) = B_{1,\cdots,1}(\chi).
	\end{align*}
	To compute the constant term of $G_{V,x,\chi}(u)$, we use the orthogonality of $\chi$ in the same manner as §\ref{section.period2}:
	\begin{align*}
		G_{V,x,\chi}(u)&=\sum_{0\le d<N}\chi(x+d\cdot v) \prod_{1\le i\le k} \frac{e^{-(x_i+d_i)\sum_{1\le j\le k}v_i^{(j)}u_j}-1}{1-e^{-N\sum_{1\le j\le k}v_i^{(j)}u_j}}=(-1)^k\sum_{0\le d<N}\chi(x+d\cdot v)\prod_{1\le i\le k}\frac{x_i+d_i}{N} + O(u).
	\end{align*}
	Hence we recover the formula
	\begin{align}
		L_{V,x}(0,\chi) = \frac{(-1)^k}{N^k}\sum_{0\le d<N}\chi(x+d\cdot v)\prod_{1\le i\le k}d_i.
	\end{align}
	
	\subsection{The $p$-adic formula}
	\label{subsection.A.p-adic}
	
	Let $V$ be a generator set in the Shintani decomposition and $x\in P(V)\cap \got{a}_i^{-1}$ for some $i$. By our assumptions on $\got{a}_i$ and $V$, we have
	\begin{align*}\textstyle
		(\got{a}_i^{-1}/\sum_{1\le i\le k}\Z v_i)\otimes \Z_p = \cali{O}_p/\sum_{1\le i\le k}\Z_p v_i=0.
	\end{align*}
	So the finite group $(\got{a}_i^{-1}/\sum_{1\le i\le k}\Z v_i)$ is $p$-divisible. As $P(V)\cap \got{a}_i^{-1}$ is in set bijection with $\got{a}_i^{-1}/\sum_{1\le i\le k}\Z v_i$, we define $\tau_p$ to be the automorphism of $P(V)\cap \got{a}_i^{-1}$ corresponding to the $p$-multiplication of $\got{a}_i^{-1}/\sum_{1\le i\le k}\Z v_i$. Below we prove the interpolation formula
	\begin{align}
		\label{equation.interpolation-0}
		L_{p,V,x}(0,\chi\omega_F) = L_{V,x}(0,\chi) - \chi(p)L_{V,\tau_p^{-1}x}(0,\chi).
	\end{align}
	Granting \eqref{equation.interpolation-0}, it follows that
	\begin{align}
		\label{equation.interpolation-x}
		\sum_{x\in P(V)\cap \got{a}_i^{-1}} L_{p,V,x}(0,\chi\omega_F) = (1-\chi(p))\sum_{x\in P(V)\cap \got{a}_i^{-1}}L_{V,x}(0,\chi)
	\end{align}
	for any representative $\got{a}_i$ of $\cl_+(1)$, whereby $L_{p,F}(0,\chi\omega_F) = (1-\chi(p))L_F(0,\chi)$.
	
	\vspace{3mm}
	
	To begin our proof, applying Lemma \ref{lem.p-analyse} and Corollary \ref{cor.big-ferrero-greenberg-map} to the sum expression of $L_{p,V,x}(s,\chi\omega_F)$ for $x = \frac{c\cdot v}{h}$, we find
	\begin{align*}
		L_{p,V,x}(0,\chi\omega_F) = (-1)^k\sum_{1\le d_1,\cdots,d_k\le N}\chi(x+(d-1)\cdot v) \lim_{n\to \infty} 
		\sum_{\substack{1+\frac{(h-c)(q^n-1)}{Nh}\le l<1+\frac{(h-c)(q^n-1)}{Nh}+(N-d)\frac{q^n-1}{N}\\ \gcd(p,l\cdot v)=1}} 1.
	\end{align*}
	Next, note that
	\begin{align*}
		&\#\left\{1+\frac{(h-c)(q^n-1)}{Nh}\le l<1+\frac{(h-c)(q^n-1)}{Nh}+(N-d)\frac{q^n-1}{N}, \gcd(p,l\cdot v)=1\right\}\\
		=&\#\left\{1+\frac{(h-c)(q^n-1)}{Nh}\le l<1+\frac{(h-c)(q^n-1)}{Nh}+(N-d)\frac{q^n-1}{N}\right\} \\
		&-\#\left\{\frac{(h-c)q^n/p}{Nh}<l<\frac{(h-c)q^n/p}{Nh}+\frac{(N-d)q^n/p}{N}\right\}\\
		=&\prod_{1\le i\le k}\frac{(N-d_i)(q^n-1)}{N} - \prod_{1\le i\le k}\left[\frac{(N-d_i)q^n/p}{N}
		+\frac{(\frac{d_ih-h+c_i}{p})^*_{Nh}-(\frac{c_i-h}{p})^\sharp_{Nh}}{Nh}\right]\\
		\to&\prod_{1\le i\le k}\frac{d_i-N}{N} - \prod_{1\le i\le k} \frac{(\frac{d_ih-h+c_i}{p})^*_{Nh}-(\frac{c_i-h}{p})^\sharp_{Nh}}{Nh} \quad(n\to\infty),
	\end{align*}
	where $*$ is $\flat$ if $c_i<h$, and $*$ is $\sharp$ if $c_i= h$. Hence,
	\begin{align*}
		L_{p,V,x}(0,\chi\omega_F) &= \frac{(-1)^k}{N^k}\sum_{1\le d\le N} \chi(x+(d-1)\cdot v)\left[\prod_{1\le i\le k}d_i - \frac{1}{h^k}\prod_{1\le i\le k}\left(\frac{d_ih-h+c_i}{p}\right)^*_{Nh}\right]\\
		&= \frac{(-1)^k}{N^k}\sum_{0\le d<N} \chi(x+d\cdot v)\left[\prod_{1\le i\le k}d_i - \frac{1}{h^k}\prod_{1\le i\le k}\left(\frac{d_ih+c_i}{p}\right)^*_{Nh}\right].
	\end{align*}
	The latter sum can be further deformed:
	\begin{align*}
		-\frac{(-1)^k}{(Nh)^k}\sum_{0\le d<N}\chi(x+d\cdot v) \prod_{1\le i\le k} \left(\frac{d_ih+c_i}{p}\right)^*_{Nh}
		&=-\frac{(-1)^k}{(Nh)^k}\sum_{0\le d<N}\chi(x+pd\cdot v)\prod_{1\le i\le k}
		\left(d_i h + \frac{c_i}{p}\right)^*_{Nh}.
	\end{align*}
	Via an elementary argument, it can be shown that
	\begin{align*}
		\left(d_ih +\frac{c_i}{p}\right)^*_{Nh}=
		\begin{cases}
			d_ih + \left(\frac{c_i}{p}\right)^\flat_{Nh} - Nh\1_{d_i+\frac{1}{h}\left(\frac{c_i}{p}\right)^\flat_{Nh}\ge N} & \text{if }*=\flat,\text{ i.e., }c_i<h;\\
			d_ih + \left(\frac{c_i}{p}\right)^\flat_{Nh} - Nh\1_{d_i+\frac{1}{h}\left(\frac{c_i}{p}\right)^\flat_{Nh}> N} & \text{if }*=\sharp,\text{ i.e., }c_i=h.
		\end{cases}
	\end{align*}
	In the former case, we can rewrite it as
	\begin{align*}
		d_ih +\left(\frac{c_i}{p}\right)^\flat_{Nh} - Nh\1_{d_i+\frac{1}{h}\left[\left(\frac{c_i}{p}\right)^\flat_{Nh} - \left(\frac{c_i}{p}\right)^\flat_{h}\right]
			\ge N}
		&= h \left(d_i+\frac{1}{h}\left(\frac{c_i}{p}\right)^\flat_{Nh} -\frac{1}{h}\left(\frac{c_i}{p}\right)^\flat_{h}\right)^\flat_N + \left(\frac{c_i}{p}\right)^\flat_h\\
		&= h \left(d_i+\frac{1}{h}\left[\frac{c_i}{p} -\left(\frac{c_i}{p}\right)^\flat_{h}\right]\right)^\flat_N + \left(\frac{c_i}{p}\right)^\flat_h.
	\end{align*}
	In the latter case we have a similar recast:
	\begin{align*}
		d_ih +\left(\frac{c_i}{p}\right)^\flat_{Nh} - Nh\1_{d_i+\frac{1}{h}\left[\left(\frac{c_i}{p}\right)^\flat_{Nh} - \left(\frac{c_i}{p}\right)^\sharp_{h}\right]\ge N}
		= h \left(d_i + \frac{1}{h}
		\left[\frac{c_i}{p} -\left(\frac{c_i}{p}\right)^\sharp_{h}\right]\right)^\flat_N + \left(\frac{c_i}{p}\right)^\sharp_h.
	\end{align*}
	Therefore,
	\begin{align*}
		&-\frac{(-1)^k}{(Nh)^k}\sum_{0\le d<N}\chi(x+d\cdot v) \prod_{1\le i\le k} \left(\frac{d_ih+c_i}{p}\right)^\flat_{Nh}\\
		=& -\frac{(-1)^k}{N^k}
		\sum_{0\le d<N} \chi(x+pd\cdot v)\prod_{1\le i\le k}\left(d_i+\frac{1}{h}\left[\frac{c_i}{p}-\left(\frac{c_i}{p}\right)^*_h\right]\right)^\flat_N\\
		=&-\frac{(-1)^k}{N^k}\sum_{0\le d<N}\chi\left(\frac{c}{h}\cdot v +p\sum_{1\le i\le k}\left(d_i-\frac{1}{h}\left[\frac{c_i}{p}-\left(\frac{c_i}{p}\right)^*_h\right]\right)v_i\right)\prod_{1\le i\le k} d_i\\
		=&-\frac{(-1)^k\chi(p)}{N^k}\sum_{0\le d<N}\chi\left(\frac{\sum_{1\le i\le k}(c_i/p)^*_h v_i}{h} +d\cdot v\right)\prod_{1\le i\le k} d_i
	\end{align*}
	The last formula is no other than $-\chi(p)L_{V,\tau_p^{-1}x}(0,\chi)$, and our proof of \eqref{equation.interpolation-0} is thus complete.

	\section{Numerical computation of Iwasawa invariants over $\Q(\sqrt{5})$}
	
	\label{section.appendix-b}
	This appendix is dedicated to explaining how one may use the results from the main article to compute analytic Iwasawa $\lambda$- and $\mu$-invariants of an abelian extension of a given totally real field $F$ whose conductor divides a Cassou-Nogu\`es ideal; or more exactly that of the subfields cut out by characters of the corresponding Galois group. For simplicity we will only treat the case when $F=\Q(\sqrt{5})$, the Cassou-Nogu\`es ideal is a prime and the character is quadratic.
	
	\subsection{Setup}
	\label{subsection.appendix.setup}
	Throughout we let $F=\Q(\sqrt{5})$, $\cali{O}=\Z[\frac{1+\sqrt{5}}{2}]$ be the ring of integers in $F$, $p$ be a rational prime and $\cali{L}$ a Cassou-Nogu\`es prime of $F$, namely $\cali{O}/\cali{L} = \Z/\ell$, with $\ell = \nm(\cali{L})$. We have an identification $\cl_+(\cali{L}) = (\Z/\ell)^\times/\chx{\varepsilon}$ with $\varepsilon = \frac{3+\sqrt{5}}{2}$, since the narrow class group of $F$ is trivial. Therefore, a character on $\cl_+(\cali{L})$ can be identified as a Dirichlet character on $(\Z/\ell)^\times$. By the functional equations of the complex $L$-functions, the attached $p$-adic $L$-function $L_p(s,\chi\omega)$ is nonzero if and only if $\chi$ is totally odd. In simple terms, this requires that the character is such that
	\begin{align*}
		\chi(-1) = 1\quad \text{and}\quad \chi\left(\frac{1+\sqrt{5}}{2}\right) = -1.
	\end{align*}
	Eventually we will only consider the case when $\chi$ is quadratic. Under this condition, for $\ell<1000$, $\ell$ could only be one of the primes below:
	\begin{align*}
		41, 61, 109, 149, 241, 269, 281, 389, 409, 421, 449, 569, 601, 641, 661, 701, 821, 829, 881, 929.
	\end{align*}
	
	The major computational tool we will use is the period formula from Theorem 4.4 of the main article. Namely, for $0\le l_1,l_2<p^n$,
	\begin{align}
		\label{equation.period-formula}
		\mu(1+l_1+l_2\varepsilon + p^n\cali{O}_p) = \frac{1}{\ell^2}\sum_{1\le d_1,d_2<\ell} \chi(1+l_1+p^nd_1+(l_2+p^nd_2)\varepsilon)d_1d_2,
	\end{align}
	where $\mu$ is the attached $p$-adic measure of $L_p(s,\chi\omega)$. Recall also we have the integral representation
	\begin{align}
		\label{equation.integral-representation}
		L_p(s,\chi\omega) = \int_{\cali{O}_p^\times} \chx{\nm\alpha}^{-s}\mu(\alpha);
	\end{align}
	or if we let $s = -k$ where $k\in \Z_{>0}$,
	\begin{align*}
		L(-k,\chi\omega^k) = \int_{\cali{O}_p}\nm\alpha^k \mu(\alpha),
	\end{align*}
	with $L(s,\chi\omega^k)$ being the complex Hecke $L$-function.
	
	\vspace{3mm}
	
	As an exercise, one can compute $\mu(1+\cali{O}_p)$ for the Cassou-Nogu\`es primes $\ell = 11, 19, 29, 31, 41$ and the attached quadratic character. Among them only when $\ell = 41$ the value is nonzero ($=2$), and this echoes with our note that the smallest interesting $\ell$ is $41$.
	
	\subsection{Approximate Iwasawa functions}
	\label{subsection.appendix.approximate}
	In the following assume $p\ne 2$. Fix now a topological generator $\gamma$ of $1+p\Z_p$. With the same setting as in §\ref{subsection.appendix.setup}, the Iwasawa function $G_{\chi\omega}(T)\in \cali{O}_\chi[[T]]$ is defined by (\textit{cf.}~\cite[p.~494, (1.3)]{Wi90})
	\begin{align*}
		G_{\chi\omega}(\gamma^{1-s}-1) = L_p(s,\chi\omega);
	\end{align*}
	here $\cali{O}_\chi$ is $\cali{O}$ adjoining values of $\chi$. Using \eqref{equation.integral-representation} and the fact that a power series in $\cali{O}_\chi[[T]]$ has finitely many zeroes, we deduce that
	\begin{align*}
		G_{\chi\omega}(\gamma(1+T)-1) = \int_{\cali{O}_p^\times} (1+T)^{\log_\gamma(\nm\alpha)}\mu(\alpha),
	\end{align*}
	where $\log_\gamma x = \log_p x/\log_p \gamma$, with $\log_p$ being the $p$-adic logarithm. From this one further obtains the congruence
	\begin{align}
		\label{equation.congruence}
		\begin{split}
			&G_{\chi\omega}(\gamma(1+T)-1)\\ 
			\equiv & \sum_{0\le m<p^n} (1+T)^m \sum_{\substack{0\le l_1,l_2<p^{n+1}\\ \gcd(p,1+l_1+l_2\varepsilon)=1\\ \chx{\nm(1+l_1+l_2\varepsilon)}\equiv \gamma^m \bmod p^{n+1}}} \mu(1+l_1+l_2\varepsilon+p^{n+1}\cali{O}_p) \pmod {(1+T)^{p^n}-1}.
		\end{split}
	\end{align}
	It can be shown that the $\mu$- and $\lambda$-invariants of $G_{\chi\omega}(\gamma(1+T)-1)$ coincide with those of $G_{\chi\omega}(T)$, so we do not distinguish them. Assuming $\mu(G_{\chi\omega})=0$, we see that the $\lambda(G_{\chi\omega})$ can be pinned down by looking at the smallest $r\ge 0$ such that the coefficient of $T^r$ on the right is a $p$-unit, if such coefficients exist.
	
	\vspace{3mm}
	
	We describe the steps to execute the computations, given $(p,n,\cali{L})$:
	\begin{enumerate}
		\item Choose a set of representatives $\{a_0=1, a_1,\cdots,a_{p^n-1}\}$ of $(1+p\Z_p)/(1+p^{n+1}\Z_p)$, such that $a_1^j\equiv a_j\bmod p^{n+1}$ for $0\le j< p^n$.
		
		\item Compute the table of $\nm(1+l_1+l_2\epsilon)=(1+l_1)^2+3(1+l_1)l_2+l_2^2 \bmod p^{n+1}$ for $0\le l_1,l_2<p^{n+1}$; dispose of these entries with $p\mid (1+l_1)^2+3(1+l_1)l_2+l_2^2$.
		
		\item Partition the remaining pairs $(l_1,l_2)$ in $p^n$ groups $\{S_{i}\}_{0\le i< p^n}$, according to $\chx{\nm(1+l_1+l_2\epsilon)} \equiv a_i\bmod p^{n+1}$.
		
		\item Compute the table of values $\mu(1+l_1+l_2\epsilon+p^{n+1}\cali{O}_p)$ for $0\le l_1,l_2<p^{n+1}$; note that in practice this only takes $O(\ell^4)$ time, since $\mu(1+l_1+l_2\epsilon+p^{n+1}\cali{O}_p)$ is $\ell$-periodic in $l_1,l_2$.
		
		\item For each group $S_i$, compute the sum $A_i = \sum_{(l_1,l_2)\in S_i}\mu(1+l_1+l_2\epsilon+p^{n+1}\cali{O}_p)$. Note that \eqref{equation.congruence} implies that
		\begin{align}
			\label{equation.approx-iwasawa-functions}
			G_{\chi\omega}(\gamma(1+T)-1) \equiv \sum_{0\le i<p^n} A_i (1+T)^i \pmod {(1+T)^{p^n}-1}.
		\end{align}
	\end{enumerate}
	
	\subsection{Numerical examples}
	\label{subsection.appendix.numerical}
	First let $p=3$ and $n=1$. We can first carry out steps 1,2,3 above, since they do not require the knowledge of $\cali{L}$. We have $(\Z/9)^\times\simeq \{\pm 1\}\times \{1+3 = 4, 1+3\times 2 = 7, 1\}$, so we take $a_1=4, a_2=7$. In the following table, we record the partition of pairs $(l_1,l_2)$ for $0\le l_1,l_2<9$ into groups $S_0,S_1,S_2$ (an empty cell means the corresponding $(l_1,l_2)$ is disposed of):
	
	\begin{align*}
		\begin{array}{| c | c | c | c | c | c | c | c | c | c |}
			\hline
			& l_2 = 0 & 1 & 2 & 3 & 4 & 5 & 6 & 7 & 8\\ \hline
			l_1 = 0 & 0 & 1 & 2 & 0 & 2 & 1 & 0 & 0 & 0\\ \hline
			1 & 1 & 2 & 2 & 1 & 0 & 1 & 1 & 1 & 0\\ \hline
			2 & & 0 & 1 & & 2 & 2 & & 1 & 0\\ \hline
			3 & 2 & 2 & 0 & 2 & 0 & 2 & 2 & 1 & 1\\ \hline
			4 & 2 & 1 & 1 & 2 & 2 & 0 & 2 & 0 & 2\\ \hline
			5 &  & 0 & 1 &  & 2 & 2 &  & 1 & 0\\ \hline
			6 & 1 & 0 & 1 & 1 & 1 & 0 & 1 & 2 & 2\\ \hline
			7 & 0 & 0 & 0 & 0 & 1 & 2 & 0 & 2 & 1\\ \hline
			8 & & 0 & 1 & & 2 & 2 & & 1 & 0 \\ \hline
		\end{array}
	\end{align*}
	As such, one can compute $A_0,A_1,A_2$ for varying Cassou-Nogu\`es primes $\cali{L}$. Below, given a split rational prime $\ell$, we always pick the prime $\mathcal{L}$ above $\ell$ to be the one such that $\varepsilon^\flat_\cali{L}<\varepsilon^\flat_{\cali{L}^\sigma}$, where $\sigma\in\gal(\Q(\sqrt{5})/\Q)$ is the nontrivial element. (Recall for $a\in \cali{O}/\cali{L}=\Z/\ell$, $a^\flat_{\cali{L}}$ denotes the integer in $[0,\ell)$ with $a\equiv a^\flat_{\cali{L}}\bmod \cali{L}$.) For example, with $\ell=41$, $\cali{L} = (2+3\sqrt{5})$ since $\varepsilon^\flat_{(2+3\sqrt{5})} = 8<\varepsilon^\flat_{(2-3\sqrt{5})} = 36$. As expected, all of the three coefficients vanish for $\ell<41$. For interesting $\ell$'s we find the following table:
	\begin{align*}
		\begin{array}{| c | c | c | c | c | c |}
			\hline
			\ell & A_0 & A_1 & A_2 & G_{\chi\omega}(T) \bmod (1+T)^3-1 & \lambda_3\\ \hline
			41 & -4 & 8 & 0 & 4 + 8T &0\\ \hline
			61 & 4 & -12 & 8 & 4T+8T^2 &1\\ \hline
			109 & 8 & 8 & -16 & -24T-16T^2 & 2\\ \hline
			149 & -20 & 8 & 16 & 4 + 40T + 16T^2 & 0\\ \hline
			241 & -4 & -12 & 16 & 20T+16T^2 & 1\\ \hline
			269 & 20 & 12 & -28 & 4 - 44T -28T^2 & 0\\ \hline
			281 & 0 & -8 & 20 & 12 + 32T +20T^2 & 1\\ \hline
			389 & 16 & -32 & 20 & 4 + 8T + 20 T^2 & 0\\ \hline
			409 & -24 & 20 & 4 & 28T + 4T^2 & 1\\ \hline
			421 & 12 & -28 & 16 & 4T + 16T^2 & 1\\ \hline
			449 & 20 & 12 & -20 & 12 - 28T - 20T^2 & 1\\ \hline
			569 & 8 & -24 & 28 & 12 + 32T + 28T^2 & 1\\ \hline
			601 & -4 & -20 & 24 & 28T + 24T^2 & 1\\ \hline
			641 & 8 & -16 & 28 & 20 + 40T + 28T^2 & 0\\ \hline
			661 & -32 & 36 & -4 & 28T - 4T^2 & 1\\ \hline
			701 & 36 & 12 & -36 & 12 - 60T -36T^2 & ?\\ \hline
			821 & 16 & -32 & 28 & 12 +24T +28T^2 & 2\\ \hline
			829 & 8 & 32 & -40 & -48T-40T^2 & 2\\ \hline
			881 & 44 & -4 & -20 & 20 -44T -20T^2 & 0\\ \hline
			929 & 8 & -32 & 36 & 12 + 40T + 36T^2 & 1\\ \hline
		\end{array}
	\end{align*}
	In particular, we have numerically verified $\mu_3=0$ in these cases except for $\ell=701$. Similarly we do this for $n=2$, and obtain a table
	\begin{align*}
		\begin{array}{| c | c | c | c | c | c | c | c | c | c |}
			\hline
			\ell & A_0 & A_1 & A_2 & A_3 & A_4 & A_5 & A_6 & A_7 & A_8 \\ \hline
			41 & 4 & 8 & -4 & 8 & -16 & -8 & -16 & 16 & 12\\ \hline
			61 & 4 & 16 & -4 & -12 & -4 & -8 & 12 & -24 & 20\\ \hline
			109 & 24 & 16 & 16 & -16 & -24 & 0 & 0 & 16 & -32\\ \hline
			149 & -12 & 8 & -12 & 24 & -16 & -8 & -32 & 16 & 36\\ \hline
			241 & -36 & -12 & -4 & 12 & -24 & 48 & 20 & 24 & -28\\ \hline
			269 & -12 & -20 & -52 & 28 & 52 & -12 & 4 & -20 & 36\\ \hline
			281 & -8 & 20 & -40 & -24 & -36 & 44 & 32 & 8 & 16\\ \hline
			389 & -24 & 36 & -24 & -16 & -68 & 36 & 56 & 0 & 8\\ \hline
			409 & -36 & 52 & 40 & 12 & -44 & -52 & 0 & 12 & 16\\ \hline
			421 & 52 & -72 & 68 & -4 & 48 & -16 & -36 & -4 & -36\\ \hline
			449 & 12 & -20 & 12 & -52 & -28 & -24 & 60 & 60 & -8\\ \hline
			569 & 64 & -16 & 0 & -16 & 28 & -44 & -40 & -36 & 72\\ \hline
			601 & 36 & 48 & -44 & -52 & -28 & -8 & 12 & -40 & 76\\ \hline
			641 & -24 & -52 & 64 & 56 & 8 & 36 & -24 & 28 & -72\\ \hline
			661 & -24 & -12 & 32 & -76 & 60 & 12 & 68 & -12 & -48\\ \hline
			701 & -44 & -4 & -76 & 20 & 60 & -20 & 60 & -44 & 60\\ \hline
			821 & -40 & 68 & -64 & 16 & -108 & 60 & 40 & 8 & 32\\ \hline
			829 & -32 & -72 & -4 & -24 & 64 & -76 & 64 & 40 & 40\\ \hline
			881 & 28 & -4 & 44 & -44 & -52 & -72 & 60 & 52 & 8\\ \hline
			929 & -8 & 44 & -56 & -72 & -60 & 108 & 88 & -16 & -16\\ \hline
		\end{array}
	\end{align*}
	We record the approximate Iwasawa functions modulo $(1+T)^9-1$ separately:
	\begin{align*}
		\begin{array}{|c|c|}
			\hline
			\ell = 41 & 4+ 32T+ 276T^2+ 776T^3+ 1104T^4+ 904T^5+ 432T^6+ 112T^7+ 12T^8\\ \hline
			61 &  -20T+ 92T^2+ 412T^3+ 696T^4+ 680T^5+ 404T^6+ 136T^7+ 20T^8\\ \hline
			109 & -240T+ -736T^2 -1344T^3 -1704T^4 -1456T^5 -784T^6 -240T^7 -32T^8\\ \hline
			149 & 4+ 160T+ 748T^2+ 1816T^3+ 2544T^4+ 2152T^5+ 1088T^6+ 304T^7+ 36T^8\\ \hline
			241 & 224T+ 388T^2+ 68T^3 -604T^4 -896T^5 -596T^6 -200T^7 -28T^8\\ \hline
			269 & 4+ 280T+ 872T^2+ 1512T^3+ 1872T^4+ 1608T^5+ 872T^6+ 268T^7+ 36T^8\\ \hline
			281 & 12+ 320T+ 1208T^2+ 2088T^3+ 2064T^4+ 1300T^5+ 536T^6+ 136T^7+ 16T^8\\ \hline
			389 & 4+ 248T+ 944T^2+ 1640T^3+ 1512T^4+ 820T^5+ 280T^6+ 64T^7+ 8T^8\\ \hline
			409 & -56T -8T^2+ 632T^3+ 1236T^4+ 1096T^5+ 532T^6+ 140T^7+ 16T^8\\ \hline
			421 & -368T -1448T^2 -2848T^3 -3232T^4 -2332T^5 -1072T^6 -292T^7 -36T^8\\ \hline
			449 & 12+ 332T+ 1384T^2+ 2448T^3+ 2292T^4+ 1148T^5+ 256T^6 -4T^7 -8T^8\\ \hline
			569 & 12 -88T+ 340T^2+ 1628T^3+ 2988T^4+ 2992T^5+ 1724T^6+ 540T^7+ 72T^8\\ \hline
			601 & 52T+ 1020T^2+ 2852T^3+ 4032T^4+ 3480T^5+ 1860T^6+ 568T^7+ 76T^8\\ \hline
			641 & 20 -68T -1148T^2 -3084T^3 -4232T^4 -3552T^5 -1844T^6 -548T^7 -72T^8\\ \hline
			661 & 64T -292T^2 -1464T^3 -2640T^4 -2520T^5 -1360T^6 -396T^7 -48T^8\\ \hline
			701 & 12+ 576T+ 1800T^2+ 3080T^3+ 3520T^4+ 2776T^5+ 1432T^6+ 436T^7+ 60T^8\\ \hline
			821 & 12+ 408T+ 1600T^2+ 3056T^3+ 3312T^4+ 2260T^5+ 992T^6+ 264T^7+ 32T^8\\ \hline
			829 & 708T+ 2468T^2+ 4392T^3+ 4844T^4+ 3388T^5+ 1464T^6+ 360T^7+ 40T^8\\ \hline
			881 & 20+ 172T+ 1096T^2+ 2496T^3+ 2868T^4+ 1828T^5+ 648T^6+ 116T^7+ 8T^8\\ \hline
			929 & 12+ 304T+ 984T^2+ 1072T^3+ 120T^4 -596T^5 -472T^6 -144T^7 -16T^8\\ \hline
		\end{array}
	\end{align*}
	This shows that $\mu_3$ is also zero for $\ell=701$, in which case $\lambda_3=3$.
	
	\vspace{3mm}
	
	For $p=5$ and $n=1$, we record the values of $A_i$'s:
	\begin{align*}
		\begin{array}{| c | c | c | c | c | c | c | c |}
			\hline
			\ell & A_0 & A_1 & A_2 & A_3 & A_4 & G_{\chi\omega}(T)\bmod (1+T)^5-1 & \lambda_5\\ \hline
			41 & -4 & -4 & 0 & 12 & -4 & 16T+ 12T^2 -4T^3 -4T^4 & 1\\ \hline
			61 & -4 & 4 & 12 & -12 & 0 & -8T -24T^2 -12T^3 & 1\\ \hline
			109 & 0 & 8 & -4 & -16 & 16 & 4+ 16T +44T^2 +48T^3+ 16T^4 & 0\\ \hline
			149 & -20 & 20 & 4 & 4 & -4 & 4+ 24T -8T^2 -12T^3 -4T^4 & 0\\ \hline
			241 & -4 & -12 & 0 & 28 & -12 & 24T +12T^2 -20T^3 -12T^4 & 1\\ \hline
			269 & -8 & -28 & 24 & 4 & 12 & 4+ 80T+ 108T^2+ 52T^3+ 12T^4 & 0\\ \hline
			281 & 4 & 32 & -12 & -16 & -8 & -72T -108T^2 -48T^3 -8T^4 & 1\\ \hline
			389 & 16 & -8 & -40 & 32 & 4 & 4+ 24T+ 80T^2+ 48T^3+ 4T^4 & 0\\ \hline
			409 & 8 & 16 & 4 & -32 & 16 & 12 -8T +4T^2+ 32T^3+ 16T^4 & 0\\ \hline
			421 & -32 & 4 & -8 & 0 & 36 & 132T+ 208T^2+ 144T^3+ 36T^4 & 1\\ \hline
			449 & -36 & 20 & 20 & 4 & 4 & 12+ 88T+ 56T^2+ 20T^3+ 4T^4 & 0\\ \hline
			569 & 0 & -44 & 16 & 20 & 20 & 12+ 128T+ 196T^2+ 100T^3+ 20T^4 & 0\\ \hline
			601 & 40 & -16 & -8 & -16 & 0 & -80T+ -56T^2 -16T^3 & 2\\ \hline
			641 & -20 & -4 & 0 & 44 & -20 & 48T+ 12T^2 -36T^3 -20T^4 & 1\\ \hline
			661 & -28 & 20 & 36 & -36 & 8 & 16T -24T^2 -4T^3+ 8T^4 & 1\\ \hline
			701 & 48 & -48 & 0 & -8 & 8 & -40T+ 24T^2+ 24T^3+ 8T^4 & 2\\ \hline
			821 & -40 & -12 & -8 & 0 & 60 & 212T+ 352T^2+ 240T^3+ 60T^4 & 1\\ \hline
			829 & 52 & -8 & 12 & 4 & -48 & 12 -164T -264T^2 -188T^3 -48T^4 & 0\\ \hline
			881 & 12 & 56 & -20 & -24 & -24 & -152T -236T^2 -120T^3 -24T^4 & 1\\ \hline
			929 & 12 & 24 & 28 & 4 & -56 & 12 -132T -296T^2 -220T^3 -56T^4 & 0\\ \hline
		\end{array}
	\end{align*}
	For $p=5$ and $n=2$, we have the following table
	\begin{align*}
		\begin{array}{|c|c|}
			\hline
			\ell & A_i,\ 0\le i<25\\ \hline
			41 & 0, 12, 0, -16, -24, -16, 0, 4, 0 , 8 , -20 , 8 , 20 , 0 , 20 , 28 , -4 , -20 , 32 , -16 , 4 , -20 , -4 , -4 , 8\\ \hline
			61 & 16, 24, -12, 20, 28, 4, -32, 32, -12, -8, -48, -8, 8, -8, 4, 16, 16, -12, -24, -20, 8, 4, -4, 12, -4\\ \hline
			109 & 28, -4, 0, 0, -20, 4, -16, -40, 16, -28, -28, 0, 40, -40, 20, 12, 56, 12, -12, 12, -16, -28, -16, 20, 32\\ \hline
			149 & -12, 12, -12, -24, -28, 32, 40, 28, 0, 8, 8, -28, 20, -32, -36, 0, -28, -40, -8, 52, -48, 24, 8, 68, 0\\ \hline
			241 & -32, 4, 52, -16, 48, 68, -8, -60, 88, -36, -4, -52, -16, -12, 16, 0, 24, -8, -36, -52, -36, 20, 32, 4, 12\\ \hline
			269 & 4, -24, 8, -24, -20, -36, 44, 68, 44, -8, -12, -4, -44, 24, -32, -48, 8, -44, -40, -12, 84, -52, 36, 0, 84\\ \hline
			281 & 68, -8, 40, 64, -20, -60, 92, -36, 8, -52, -12, -16, 20, 12, 36, 0, -44, -64, -40, 16, 8, 8, 28, -60, 12\\ \hline
			389 & -36, -40, 40, 88, 60, 0, -12, -12, -60, 28, -48, -56, 8, -48, -44, 0, 88, -52, 56, 0, 100, 12, -24, -4, -40\\ \hline
			409 & -80, 4, 72, -112, 48, -4, 76, 20, 12, -24, -8, -36, 12, 52, 64, 52, -24, -32, -4, -16, 48, -4, -68, 20, -56\\ \hline
			421 & 64, 88, -4, -80, 96, -36, -52, -100, -24, 16, -32, 24, 56, 36, -32, -56, -48, -4, 4, -12, 28, -8, 44, 64, -32\\ \hline
			449 & 4, -32, 68, -32, -80, 16, -64, -96, 16, 56, -128, 56, -4, 52, 32, 20, -16, 4, -36, 0, 52, 76, 48, 4, -4\\ \hline
			569 & -16, -8, -36, 80, -20, -80, 12, -76, -96, 16, 84, -124, 56, -8, 68, 12, 20, -24, -12, -32, 0, 56, 96, 56, -12\\ \hline
			601 & -24, 40, 8, 52, -8, -60, -76, -56, 28, 44, 12, 16, -56, 20, 80, -28, 64, 100, -20, -80, 140, -60, -4, -96, -36\\ \hline
			641 & -56, -8, 4, 4, 40, -80, 28, 100, -28, 72, 120, -20, -84, 156, -64, 0, -44, -24, -24, 24, -4, 40, 4, -64, -92\\ \hline
			661 & 20, -112, 88, -48, -16, -156, -28, 16, -4, 40, 52, 56, -44, -100, -68, 28, 36, 16, 48, -20, 28, 68, -40, 68, 72\\ \hline
			701 & -60, -108, -56, 0, 4, -16, 60, -40, 56, 76, -8, 72, 100, 32, -120, 104, -44, -24, -132, -4, 28, -28, 20, 36, 52\\ \hline
			821 & 88, -36, 72, 104, -20, 80, 72, -4, -124, 88, -80, -24, -132, -28, 44, -20, 56, 64, 56, -56, -108, -80, -8, -8, 4\\ \hline
			829 & -84, -116, 0, 100, -132, 56, 40, 144, 60, -20, 52, -20, -68, -56, 36, 68, 68, -8, 0, 16, -40, 20, -56, -100, 52 \\ \hline
			881 & -116, 148, -60, 36, -132, 8, -44, 52, 0, 48, -4, -84, -120, -60, 32, 36, 16, 40, -112, 36, 88, 20, 68, 112, -8\\ \hline
			929 & -52, 112, -28, -92, -8, -84, -100, 40, 120, -132, 84, -36, 120, 24, 24, -56, -28, -68, 4, 100, 120, 76, -36, -52, -40\\ \hline
		\end{array}
	\end{align*}
	
	Again, we can see that $\mu_5=0$ in these cases. An extra feature is that all the coefficients $A_i$ computed above are divisible by $4$; this is strongly reminiscent of the Deligne-Ribet $2$-divisibility \cite[Theorem 8.11]{DR80} with $\Q(\sqrt{5})$ being non-exceptional.
	
	\bibliographystyle{alpha}
	\bibliography{sumExprTotReal_v5.1}

\begin{thebibliography}{DPV21b}

\bibitem[Bar78]{Ba78}
Daniel Barsky.
\newblock Fonctions zeta {$p$}-adiques d'une classe de rayon des corps de
  nombres totalement r\'{e}els.
\newblock In {\em Groupe d'{E}tude d'{A}nalyse {U}ltram\'{e}trique (5e
  ann\'{e}e: 1977/78)}, pages Exp. No. 16, 23. Secr\'{e}tariat Math., Paris,
  1978.

\bibitem[CN79a]{CN79b}
Pierrette Cassou-Nogu\`es.
\newblock Analogues {$p$}-adiques des fonctions {$\Gamma $}-multiples.
\newblock In {\em Journ\'{e}es {A}rithm\'{e}tiques de {L}uminy ({C}olloq.
  {I}nternat. {CNRS}, {C}entre {U}niv. {L}uminy, {L}uminy, 1978)}, volume~61 of
  {\em Ast\'{e}risque}, pages 43--55. Soc. Math. France, Paris, 1979.

\bibitem[CN79b]{CN79}
Pierrette Cassou-Nogu\`es.
\newblock Valeurs aux entiers n\'{e}gatifs des fonctions z\^{e}ta et fonctions
  z\^{e}ta {$p$}-adiques.
\newblock {\em Invent. Math.}, 51(1):29--59, 1979.

\bibitem[Col88]{Co88}
Pierre Colmez.
\newblock R\'{e}sidu en {$s=1$} des fonctions z\^{e}ta {$p$}-adiques.
\newblock {\em Invent. Math.}, 91(2):371--389, 1988.

\bibitem[Das08]{Da08}
Samit Dasgupta.
\newblock Shintani zeta functions and {G}ross-{S}tark units for totally real
  fields.
\newblock {\em Duke Math. J.}, 143(2):225--279, 2008.

\bibitem[DDP11]{DDP}
Samit Dasgupta, Henri Darmon, and Robert Pollack.
\newblock Hilbert modular forms and the {G}ross-{S}tark conjecture.
\newblock {\em Ann. of Math. (2)}, 174(1):439--484, 2011.

\bibitem[Del82]{De82}
Pierre Deligne.
\newblock Hodge cycles on abelian varieties.
\newblock In {\em Hodge cycles, motives, and {S}himura varieties}, pages
  9--100. Lecture Notes in Mathematics, Vol. 900, 1982.
\newblock Notes by J.~Milne.

\bibitem[Del06]{De06}
Daniel Delbourgo.
\newblock A {D}irichlet series expansion for the {$p$}-adic zeta-function.
\newblock {\em J. Aust. Math. Soc.}, 81(2):215--224, 2006.

\bibitem[Del09a]{De09}
Daniel Delbourgo.
\newblock The convergence of {E}uler products over {$p$}-adic number fields.
\newblock {\em Proc. Edinb. Math. Soc. (2)}, 52(3):583--606, 2009.

\bibitem[Del09b]{De09-2}
Daniel Delbourgo.
\newblock Zeta-functions through the 2-adic looking glass.
\newblock {\em Austral. Math. Soc. Gaz.}, 36(4):266--272, 2009.

\bibitem[DK20]{DK20}
Samit Dasgupta and Mahesh Kakde.
\newblock On the {B}rumer-{S}tark {C}onjecture, 2020.
\newblock Annals of Mathematics, to appear, arxiv:2010.00657.

\bibitem[DK21]{DK21}
Samit Dasgupta and Mahesh Kakde.
\newblock Brumer-{S}tark {U}nits and {H}ilbert's 12th {P}roblem, 2021.
\newblock Preprint, arxiv:2103.02516.

\bibitem[DKV18]{DKV}
Samit Dasgupta, Mahesh Kakde, and Kevin Ventullo.
\newblock On the {G}ross-{S}tark conjecture.
\newblock {\em Ann. of Math. (2)}, 188(3):833--870, 2018.

\bibitem[DPV21a]{DPV}
Henri Darmon, Alice Pozzi, and Jan Vonk.
\newblock Diagonal restrictions of {$p$}-adic {E}isenstein families.
\newblock {\em Math. Ann.}, 379(1-2):503--548, 2021.

\bibitem[DPV21b]{DPV2}
Henri Darmon, Alice Pozzi, and Jan Vonk.
\newblock The values of the {D}edekind-{R}ademacher cocycle at real
  multiplication points, 2021.
\newblock Journal of the European Mathematical Society, to appear,
  arxiv:2103.02490.

\bibitem[DR80]{DR80}
Pierre Deligne and Kenneth~A. Ribet.
\newblock Values of abelian {$L$}-functions at negative integers over totally
  real fields.
\newblock {\em Invent. Math.}, 59(3):227--286, 1980.

\bibitem[DyDF12]{DF12}
Francisco Diaz~y Diaz and Eduardo Friedman.
\newblock Colmez cones for fundamental units of totally real cubic fields.
\newblock {\em J. Number Theory}, 132(8):1653--1663, 2012.

\bibitem[Fer78]{Fe78}
Bruce Ferrero.
\newblock Iwasawa invariants of {A}belian number fields.
\newblock {\em Math. Ann.}, 234(1):9--24, 1978.

\bibitem[FG79]{FG78}
Bruce Ferrero and Ralph Greenberg.
\newblock On the behavior of {$p$}-adic {$L$}-functions at {$s=0$}.
\newblock {\em Invent. Math.}, 50(1):91--102, 1978/79.

\bibitem[FW79]{FW79}
Bruce Ferrero and Lawrence~C. Washington.
\newblock The {I}wasawa invariant {$\mu _{p}$} vanishes for abelian number
  fields.
\newblock {\em Ann. of Math. (2)}, 109(2):377--395, 1979.

\bibitem[GK79]{GK79}
Benedict~H. Gross and Neal Koblitz.
\newblock Gauss sums and the {$p$}-adic {$\Gamma $}-function.
\newblock {\em Ann. of Math. (2)}, 109(3):569--581, 1979.

\bibitem[Gro81]{Gr81}
Benedict~H. Gross.
\newblock {$p$}-adic {$L$}-series at {$s=0$}.
\newblock {\em J. Fac. Sci. Univ. Tokyo Sect. IA Math.}, 28(3):979--994 (1982),
  1981.

\bibitem[Hid93]{Hida}
Haruzo Hida.
\newblock {\em Elementary theory of {$L$}-functions and {E}isenstein series},
  volume~26 of {\em London Mathematical Society Student Texts}.
\newblock Cambridge University Press, Cambridge, 1993.

\bibitem[Iwa58]{Iw58}
Kenkichi Iwasawa.
\newblock On some invariants of cyclotomic fields.
\newblock {\em Amer. J. Math. 80 (1958), 773-783; erratum}, 81:280, 1958.

\bibitem[Kas05]{Kas05}
Tomokazu Kashio.
\newblock On a {$p$}-adic analogue of {S}hintani's formula.
\newblock {\em J. Math. Kyoto Univ.}, 45(1):99--128, 2005.

\bibitem[Kat81]{Ka81}
Nicholas~M. Katz.
\newblock Another look at {$p$}-adic {$L$}-functions for totally real fields.
\newblock {\em Math. Ann.}, 255(1):33--43, 1981.

\bibitem[KW21]{KW21}
Heiko Knospe and Lawrence~C. Washington.
\newblock Dirichlet series expansions of {$p$}-adic {$L$}-functions.
\newblock {\em Abh. Math. Semin. Univ. Hambg.}, 91(2):325--334, 2021.

\bibitem[LV22]{LV22}
Alan Lauder and Jan Vonk.
\newblock Computing {$p$}-adic {L}-functions of totally real fields.
\newblock {\em Math. Comp.}, 91(334):921--942, 2022.

\bibitem[Mor75]{Mo75}
Yasuo Morita.
\newblock A {$p$}-adic analogue of the {$\Gamma $}-function.
\newblock {\em J. Fac. Sci. Univ. Tokyo Sect. IA Math.}, 22(2):255--266, 1975.

\bibitem[Rib79]{Ri79}
Kenneth~A. Ribet.
\newblock Report on {$p$}-adic {$L$}-functions over totally real fields.
\newblock In {\em Journ\'{e}es {A}rithm\'{e}tiques de {L}uminy ({C}olloq.
  {I}nternat. {CNRS}, {C}entre {U}niv. {L}uminy, {L}uminy, 1978)}, volume~61 of
  {\em Ast\'{e}risque}, pages 177--192. Soc. Math. France, Paris, 1979.

\bibitem[Rob15]{Ro15}
Xavier-Fran\c{c}ois Roblot.
\newblock Computing {$p$}-adic {$L$}-functions of totally real number fields.
\newblock {\em Math. Comp.}, 84(292):831--874, 2015.

\bibitem[Shi76]{Sh76}
Takuro Shintani.
\newblock On evaluation of zeta functions of totally real algebraic number
  fields at non-positive integers.
\newblock {\em J. Fac. Sci. Univ. Tokyo Sect. IA Math.}, 23(2):393--417, 1976.

\bibitem[Wil90]{Wi90}
A.~Wiles.
\newblock The {I}wasawa conjecture for totally real fields.
\newblock {\em Ann. of Math. (2)}, 131(3):493--540, 1990.

\bibitem[Yam10]{Ya10}
Shuji Yamamoto.
\newblock On {S}hintani's ray class invariant for totally real number fields.
\newblock {\em Math. Ann.}, 346(2):449--476, 2010.

\bibitem[Zha22]{Zh22}
Luochen Zhao.
\newblock Sum expressions for {$p$}-adic {K}ubota-{L}eopoldt {$L$}-functions.
\newblock {\em Proc. Edinb. Math. Soc. (2)}, 65(2):460--479, 2022.

\end{thebibliography}
\end{document}